\documentclass{article}

\usepackage[12pt]{extsizes}
\usepackage{cmap}
\usepackage[utf8]{inputenc}
\usepackage[T2A]{fontenc}
\usepackage[english]{babel}
\usepackage{amsmath}
\usepackage{amsthm}
\usepackage{amssymb}
\usepackage{amsfonts}
\usepackage{euscript}
\usepackage{enumerate}
\usepackage{hyperref}
\usepackage[affil-it]{authblk}

\usepackage[all]{xy}
\usepackage{epigraph}

\newcommand{\RN}{\mathbb{R}} 
\newcommand{\CN}{\mathbb{C}} 
\renewcommand{\C}{\mathbb{C}}
\newcommand{\ZN}{\mathbb{Z}} 

\newcommand{\eps}{\ensuremath\varepsilon}
\newcommand{\intl}{\int\limits}
\newcommand{\suml}{\sum\limits}

\newcommand{\plusl}{\bigoplus\limits}
\newcommand{\ovl}{\overline}

\newcommand{\gr}{\mathop{\mathrm{gr}}\nolimits}

\newcommand{\SL}{\mathop{\mathrm{SL}}\nolimits}

\renewcommand{\Im}{\mathop{\mathrm{Im}}\nolimits}

\newcommand{\ad}{\ensuremath\operatorname{ad}}
\newcommand{\id}{\ensuremath\operatorname{id}}

\newcommand{\mf}{\mathfrak}

\newcommand{\mc}[1]{\mathcal{#1}}

\renewcommand{\Re}{\operatorname{Re}}

\newcommand{\abs}[1]{\lvert #1\rvert}
\newcommand{\bP}{{\bold P}}

\renewcommand{\le}{\leqslant}
\newcommand{\Mat}{\operatorname{Mat}}
\begin{document}
\newtheorem{thr}{Theorem}[section]
\newtheorem*{thr*}{Theorem}
\newtheorem{lem}[thr]{Lemma}
\newtheorem*{lem*}{Lemma}
\newtheorem{cor}[thr]{Corollary}
\newtheorem{prop}[thr]{Proposition}
\newtheorem*{prop*}{Proposition}
\newtheorem{stat}[thr]{Statement}
\newtheorem*{stat*}{Statement}
\newtheorem{example}[thr]{Example}

\theoremstyle{definition}
\newtheorem{defn}[thr]{Definition}
\newtheorem*{defn*}{Definition}
\theoremstyle{remark}
\newtheorem{rem}[thr]{Remark}
\newtheorem*{rem*}{Remark}
\numberwithin{equation}{section}
\author{Daniil Klyuev}
\title{Unitarizability of Harish-Chandra bimodules over generalized Weyl and $q$-Weyl algebras}
\maketitle
\begin{abstract}
Let $\mc{A}$ be a quantized ($K$-theoretic) BFN Coulomb branch with $G=\C^*$ and any $N$, that is, $\mc{A}$ is generalized Weyl or $q$-Weyl algebra. Let $M$ be an $\mc{A}-\ovl{\mc{A}}$-bimodule. Choosing an antilinear automorphism $\rho$ of $\mc{A}$ we can define the notion of an invariant Hermitian form on $M$: $(au,v)=(u,v\rho(a))$ for all $u,v\in M$ and $a\in\mc{A}$. We obtain a classification of invariant positive definite forms on $M$ in the case when $M$ is Harish-Chandra in the sense of Losev and quantization parameter is generic.
\end{abstract}
\section{Introduction}


Let $\mc{A}$ be an algebra over $\CN$, $M$ be an $\mc{A}-\ovl{\mc{A}}$-bimodule. Choose an automorphism of $\mc{A}$. It gives an antilinear isomorphism of algebras $\rho\colon \mc{A}\to\ovl{\mc{A}}$. A Hermitian form $(\cdot,\cdot)$ on $M$ is said to be {\it $\rho$-invariant} if $(am,n)=(m,n\rho(a))$ for all $m,n\in M$, $a\in \mc{A}$. 

The notion of {\it Harish-Chandra} bimodule~\cite{LosevHarishChandra} is a generalization of a classical notion of $(\mf{g},K)$-module. For an algebra $\mc{A}$ and a Harish-Chandra bimodule $M$ one can ask the following question: what are the invariant positive definite forms on $M$? For example, let $\mf{g}$ be a complex simple Lie algebra, $G$ be the corresponding simply-connected group. If we take $\mc{A}$ to be  a central reduction of $U(\mf{g})$, the classification of irreducible Harish-Chandra bimodules $M$ that have an invariant positive definite form is equivalent to the classification of irreducible unitary representations of $G$ with this central character.

We will consider the following case. Let $A$ be an algebra of function of a {\it Kleinian singularity of type $A$}, meaning $A=\CN[x,y]^{\ZN/n}=\CN[x^n,y^n,xy]$. We take $\mc{A}$ to be a filtered deformation or a $q$-deformation of $A$. 

Filtered deformations $\mc{A}$ of $\CN[x,y]^{C_n}$ are parametrized by monic polynomials $P$ of degree $n$: deformation $\mc{A}_P$ is generated by $u,v,z$ with relations \[[z,u]=-u,\quad [z,v]=v, \quad uv=P(z+\tfrac12), \quad vu=P(z-\tfrac12).\] These algebras are also called generalized Weyl algebras: when $n=1$, $P(x)=x$, $\mc{A}$ is the usual Weyl algebra with generators $u,v$ and relation $[u,v]=1$.

Similarly, $q$-deformations $\mc{A}$ of $\CN[x,y]^{C_n}$ correspond to Laurent polynomials $P$: deformation $\mc{A}_P$ is generated by $u,v,Z$ with relations 
\[ZuZ^{-1}=q^2u,\quad ZvZ^{-1}=q^{-2}v,\quad uv=P(q^{-1}Z),\quad vu=P(qZ).\] These algebras are also called generalized $q$-Weyl algebras: when $P(x)=1$, $\mc{A}$ is the $q$-Weyl algebra with generators $u^{\pm 1}, Z^{\pm 1}$ and relation $Zu=q^2uZ$.

Papers~\cite{EKRS},~\cite{K} classify positive definite invariant forms in the case when $\mc{A}$ is isomorphic to $\ovl{\mc{A}}$ and $M$ is the {\it regular} bimodule, meaning $M=\mc{A}$ with the natural action.
 
 In this paper we will classify positive definite invariant forms on certain bimodules over ($q$-)deformations of $\CN[x,y]^{C_n}$ in the case of {\it generic} parameter. In the case of a filtered deformation this means that all roots of $P(x)$ are distinct and no two roots can differ by an integer. In this case we have a construction of $\mc{A}$ as a certain subalgebra inside $\CN[x,x^{-1},\partial_x]$ and $M$ as a certain subspace of $\CN[x,x^{-1},\partial_x]$. We show that the modules $M$ we constructed are Harish-Chandra and, moreover, we use Losev's description of category of Harish-Chandra bimodules over $A$ to show that all simple Harish-Chandra bimodules can be constructed in this way.
 
We also do the following. Paper~\cite{ES} defined the notion of a short star-product. This notion can be extended to a not necessarily commutative algebras $A$ such that $A_0$ is a semisimple algebra. We prove some properties of short star-products in this case. In particular, each positive trace on $M$ gives a short star-product on the commutative algebra $\begin{pmatrix}
\gr\mc{A} & \gr M\\
\gr\ovl{M} & \gr\ovl{\mc{A}}
\end{pmatrix}$. Here both $\gr\mc{A}$ and $\gr\ovl{\mc{A}}$ are isomorphic to $\CN[x,y]^{C_n}$ and $\gr M$ is isomorphic to a $\CN[x,y]^{C_n}$-submodule of $\CN[x,x^{-1},y]$. Similarly to~\cite{EKRS} and~\cite{K} we expect that these short star-products  are useful in 3-dimensional superconformal field theories~\cite{BPR} and in the study of the Coulomb branch of 4-dimensional superconformal field theories~\cite{DG}.

The organization of paper is as follows. In Section~\ref{SecStarProducts} we introduce the notion of a short star-product in more general case. We have an analogue of Theorem 3.1 in~\cite{ES} in this case: there is a correspondence between short star-products on $A$ and {\it twisted traces} on filtered deformations of $A$. For an automorphism $g$ of $\mc{A}$ by $g$-twisted trace we mean a linear map $T\colon A\to\CN$ such that $T(ab)=T(bg(a))$ for all $a,b\in \mc{A}$. We also prove that under certain condition twisted traces on a matrix algebra $\begin{pmatrix}
\mc{A} & M\\
\ovl{M} & \ovl{\mc{A}}
\end{pmatrix}$ correspond to invariant sesquilinear forms on $M$. Combining these two results we obtain short star-products from positive definite invariant forms.

In Section~\ref{SecUsualUnitarizability} we classify positive definite invariant forms in the case when $\mc{A}$ is a filtered deformation of $\CN[x,y]^{C_n}$. In order to do this first we note that invariant forms on $M$ are in one-to-one correspondence with twisted traces on $L=M\otimes_{\ovl{\mc{A}}}\ovl{M}$, where twisted traces are defined similarly to the above. Then we prove that $L$ is isomorphic to $\mc{A}$ as an $\mc{A}$-bimodule. We know the classification of twisted traces on $\mc{A}$ from~\cite{EKRS}. For generic parameter each trace can be expressed as a certain contour integral. This and the construction of $M$ as a subspace of $\CN[x,x^{-1},\partial_x]$ allow us to compute the set of traces that give positive definite form on $M$.

The answer in~\cite{EKRS} depends on the number of roots $\alpha$ with $\abs{\Re\alpha}<\tfrac12$ and the number of roots with $\abs{\Re\alpha}=\tfrac12$. In our situation the module $M$ exists only when for each index $i$ from $1$ to $n$ there exists an index $j$, possibly equal to $i$ such that $\alpha_i+\ovl{\alpha_j}$ is an integer. We say that $i$ is {\it good} if the real number $\alpha_i-\alpha_j$ satisfies $\abs{\alpha_i-\alpha_j}<1$. Note that in the case $M=\mc{A}$ we have $\alpha_i+\ovl{\alpha_j}=0$, so that $\abs{\alpha_i-\alpha_j}=2\abs{\Re\alpha_i}$. Hence the notion of a good root is an analogue of a root $\alpha$ with $\abs{\Re\alpha}<\tfrac 12$. Since our parameter is generic we don't have an analogue of a root $\alpha$ with $\abs{\Re\alpha}=\tfrac12$. In the end we get the same answer as in~\cite{EKRS}: invariant positive definite forms are a convex cone of dimension equal to the number of good roots minus a constant from $0$ to $4$ that depends on $n$ and $\rho$.

In Section~\ref{SecQUnitarizability} we classify positive definite invariant forms in the case when $\mc{A}$ is a $q$-deformation of $\CN[x,y]^{C_n}$. The reasoning here is similar to the reasoning in Section~\ref{SecUsualUnitarizability}. The answer is always a convex cone of dimension equal to the number of good roots. 

In the case of filtered deformation and $n=2$ the algebra $\mc{A}$ is isomorphic to a central reduction of $U(\mf{sl}_2)$. In the case of $q$-deformation and $P(x)=ax+b+cx^{-1}$ the algebra $\mc{A}$ is isomorphic to a central reduction of $U_q(\mf{sl}_2)$. We check that our results in these cases give the same answer as the classical results on irreducible unitary representations of $\SL(2,\CN)$ and $\SL_q(2)$~\cite{Pusz}.

\subsection{Acknowledgements}
I am grateful to Pavel Etingof for suggesting the problem and helpful comments on the previous versions of this paper. The first version of this article was written while I was a graduate student at MIT.
\section{Generalized star-products}
\label{SecStarProducts}
Let $A=\bigoplus\limits_{k\geq 0} A_k$ be a positively graded algebra over $\CN$, where all graded pieces $A_k$ are finite-dimensional and $A_0$ is a semisimple algebra.

We assume that filtered algebras have a $\ZN_{\geq 0}$ filtration.
\begin{defn}
A map $*\colon A\times A\to A$ is called a star-product if $(A,*)$ is an associative algebra and  for all homogeneous $a,b\in A$ we have \[a*b=ab+\suml_{k>0}C_k(a,b),\] where $\deg C_k(a,b)=\deg a+\deg b-2k$.
\end{defn}
Similarly to~\cite{ES} star-products are in one-to-one correspondence with $\ZN/2\ZN$-equivariant filtered quantizations equipped with a quantization map.

\begin{defn}
A filtered deformation of $A$ is a pair of a filtered algebra $\mc{A}$ and an isomorphism between $\gr\mc{A}$ and $A$.
\end{defn}
In particular, $\mc{A}_{\le 0}$ is identified with $A_0$. Usually we will not mention the isomorphism and write ``$\mc{A}$ is a filtered deformation of $A$''.

\begin{defn}
A $\ZN/2\ZN$-equivariant quantization of $A$ is a pair $(\mc{A},s)$, where $\mc{A}$ is a filtered quantization of $A$ and $s$ is an involution of $\mc{A}$ such that $\gr s=(-1)^d$, meaning it acts on $A_k$ as $(-1)^k$ for all $k$.
\end{defn}
\begin{defn}
A quantization map is a linear map $\phi\colon A\to \mc{A}$ such that $\phi(A_k)\subset \mc{A}_{\le k}$ and $\gr\phi$ is the identity. We say that $\phi$ is $\ZN/2\ZN$-equivariant if $s(\phi(a))=(-1)^d\phi(a)$ for all homogeneous $a\in A$.
\end{defn}

We have the following
\begin{prop}
Star-products on $A$ are in one-to-one correspondence with pairs $(\mc{A},\phi)$, where $\mc{A}$ is a $\ZN/2\ZN$-equivariant filtered deformation of $A$ and $\phi$ is a $\ZN/2\ZN$-equivariant quantization map.
\end{prop}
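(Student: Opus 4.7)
The plan is to construct the bijection explicitly in both directions, closely following Theorem~3.1 of~\cite{ES}. In the forward direction I set $\mc{A} := (A,*)$ with filtration $\mc{A}_{\le k} := \bigoplus_{j\le k} A_j$, define $s$ to act on $A_k$ by $(-1)^k$, and take $\phi := \id_A$. The hypothesis $\deg C_k(a,b) = \deg a + \deg b - 2k$ makes $*$ a filtered product with $\gr \mc{A} \cong A$, and the same estimate forces every term of $a*b$ to share the parity of $\deg a + \deg b$, so $s$ is an algebra automorphism of $\mc{A}$ with $\gr s = (-1)^d$. All equivariance requirements for $\phi$ are then tautological.

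In the reverse direction I start from $(\mc{A},\phi)$. The conditions $\phi(A_k)\subset \mc{A}_{\le k}$ and $\gr \phi = \id$ together imply, by induction on filtration degree, that $\phi$ is a linear isomorphism $A\to \mc{A}$, giving an internal direct-sum decomposition $\mc{A} = \bigoplus_k \phi(A_k)$. I transport the multiplication by defining $a*b := \phi^{-1}(\phi(a)\phi(b))$; associativity is inherited. Expanding $\phi(a)\phi(b) - \phi(ab) \in \mc{A}_{\le i+j-1}$ in the above decomposition gives $a*b = ab + \sum_{m\ge 1} D_m(a,b)$ with $D_m(a,b)\in A_{i+j-m}$. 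The nontrivial point is that $D_m$ vanishes whenever $m$ is odd; this is where $\ZN/2\ZN$-equivariance is used. Since $\phi$ is equivariant, $\phi(a)$ is a $(-1)^i$-eigenvector of $s$ for $a\in A_i$, hence $\phi(a)\phi(b)$ is a $(-1)^{i+j}$-eigenvector because $s$ is an automorphism. Pulling back through $\phi^{-1}$ forces $a*b \in \bigoplus_{m\equiv i+j\,(\bmod 2)} A_m$, so $D_m=0$ for odd $m$ and $C_k := D_{2k}$ produces a bona fide star-product.

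To finish I verify that the two constructions are mutually inverse, up to the obvious notion of isomorphism of pairs. The round trip $* \mapsto ((A,*),\id) \mapsto *$ is the identity on the nose. Conversely, starting from $(\mc{A},\phi)$, producing $*$ and then $((A,*),\id)$, the map $\phi$ is itself an isomorphism of pairs: it is an algebra isomorphism $(A,*)\to \mc{A}$ by the definition of $*$, it intertwines the involution $(-1)^d$ on $(A,*)$ with $s$ by $\ZN/2\ZN$-equivariance of $\phi$, and it transports $\id_A$ to $\phi$. I expect the parity argument of the middle paragraph to be the main conceptual step --- without the $\ZN/2\ZN$-equivariance one would only recover a generic filtered product, and it is precisely the equivariance of $s$ and $\phi$ that eliminates the odd-degree corrections and forces the degree drop to be in multiples of two.
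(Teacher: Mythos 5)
Your proposal is correct and follows essentially the same route as the paper: the forward direction takes $\mc{A}=(A,*)$, $s=(-1)^d$, $\phi=\id$, and the reverse direction transports the product via $a*b=\phi^{-1}(\phi(a)\phi(b))$. The only difference is that you spell out the parity argument (equivariance of $s$ and $\phi$ killing the odd-degree corrections) and the inverse-up-to-isomorphism check, which the paper leaves as ``all of the required conditions are satisfied.''
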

\begin{proof}
If $*$ is a star-product on $A$ then we define $\mc{A}$ to be $(A,*)$, $s=(-1)^d$ and $\phi$ to be the identity map. On the other hand if we have $(\mc{A},\phi)$, we define $a*b:=\phi^{-1}(\phi(a)\phi(b))$. In both cases all of the required conditions are satisfied.
\end{proof}
\begin{defn}
We say that a star-product $*$ is {\it short} if for all homogeneous $a,b\in A$ and $k>\min(\deg a,\deg b)$ we have $C_k(a,b)=0$.
\end{defn}
We note that if $*$ is a short star-product on $A$ then for any $k\geq 0$, $a\in A_0, b\in A_k$ we have $a*b=ab$ and $b*a=ba$. Hence the structure of an $A_0$-bimodule on $(A,*)$ coincides with the structure of an $A_0$-bimodule on $A$.
\begin{defn}
Let $*$ be a short star-product on $A$. It is called nondegenerate if for all $k\geq 0$ the left kernel of the bilinear map $C_k\colon A_k\times A_k\to A_0$ is zero.
\end{defn}

It follows from the next proposition and its proof that if the left kernel of $C_k$ if zero then the right kernel of $C_k$ is zero and vice versa. 

\begin{prop}
Let $T_0\colon A_0\to \CN$ be a nondegenerate trace: $T_0(ab)=T_0(ba)$ for all $a,b$ and $(a,b)\mapsto T_0(ab)$ is a symmetric nondegenerate bilinear pairing on $A_0\times A_0$. Then for all $k\geq 0$ the map $T_0\circ C_k$  gives a nondegenerate bilinear form on $A_k$.
\end{prop}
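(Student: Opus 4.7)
The plan is to exploit associativity of $*$ together with shortness in order to recognise $C_k|_{A_k\times A_k}$ as a map of $A_0$-bimodules into $A_0$, and then to transport nondegeneracy from the trace pairing on $A_0$ to the form $T_0\circ C_k$ on $A_k$. Under this lens the proposition reduces to a single identity relating $C_k(a,bc)$ to $C_k(a,b)\cdot c$, together with the nondegeneracy of $T_0$.

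First I would fix homogeneous $a,b\in A_k$ and $c\in A_0$ and unpack the associativity relation $(a*b)*c = a*(b*c)$. Shortness forces $x*c = xc$ and $c*x = cx$ for every homogeneous $x$, since $\deg c = 0$ kills every higher $C_j$-term. Writing $a*b = \sum_{j=0}^{k} C_j(a,b)$ with $C_j(a,b)\in A_{2k-2j}$, this gives $(a*b)*c = \sum_j C_j(a,b)\cdot c$, whose degree-zero component is $C_k(a,b)\cdot c$ because every other summand lies in strictly positive degree. On the other side $a*(bc) = \sum_{j=0}^{k} C_j(a,bc)$ has degree-zero component $C_k(a,bc)$. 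Equating the degree-zero parts gives
\[
C_k(a, bc) = C_k(a,b)\cdot c,
\]
and the symmetric computation with $c*(a*b) = (c*a)*b$ yields $C_k(ca, b) = c\cdot C_k(a,b)$.

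Next, suppose $a\in A_k$ lies in the left radical of $T_0\circ C_k$. For every $b\in A_k$ and $c\in A_0$ we have $bc\in A_k$, so the identity above together with the hypothesis yields
\[
0 = T_0(C_k(a,bc)) = T_0\bigl(C_k(a,b)\cdot c\bigr)
\]
for every $c\in A_0$. Nondegeneracy of $(x,y)\mapsto T_0(xy)$ on $A_0$ then forces $C_k(a,b) = 0$ for every $b\in A_k$, so $a$ lies in the left kernel of $C_k$ itself. The reverse inclusion is obvious, so the two left radicals coincide; the symmetric argument handles the right radical. This simultaneously establishes the equivalence of left and right nondegeneracy of $C_k$ remarked just before the proposition and, under the standing nondegeneracy hypothesis on the short star-product $*$, the desired nondegeneracy of $T_0\circ C_k$.

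The main obstacle is confined to the bookkeeping in the second paragraph: one must verify carefully that for each $j < k$ the summands $C_j(a,b)\cdot c$ and $C_j(a,bc)$ lie in $A_{2k-2j}$ with $2k-2j > 0$, hence do not contribute to the degree-zero part, and that shortness annihilates every $C_l(\cdot, c)$ and $C_l(c, \cdot)$ for $l > 0$. Neither point is subtle, but both are essential to isolate the identity $C_k(a,bc) = C_k(a,b)\cdot c$ that drives the entire argument; the semisimplicity of $A_0$ plays no role beyond guaranteeing the existence of the nondegenerate trace $T_0$ itself.
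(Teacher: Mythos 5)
Your proof is correct, but it takes a genuinely different route from the paper. Both arguments begin the same way, using associativity plus shortness to obtain the bimodule identities $C_k(a,bc)=C_k(a,b)c$ and $C_k(ca,b)=cC_k(a,b)$; after that the paper exploits semisimplicity of $A_0$ in an essential way, decomposing $A_k$ into isotypic pieces as a left and right $A_0$-module, applying Schur's lemma to see that $C_k$ is block-diagonal, and then counting dimensions to show the left and right multiplicities agree ($d_i=e_i$), so each block is a square invertible matrix. You bypass all of that: you identify the left radical of $T_0\circ C_k$ with the left kernel of $C_k$ directly, using only the nondegeneracy of the pairing $(x,y)\mapsto T_0(xy)$ on $A_0$, and then invoke the standing nondegeneracy of $*$. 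Your argument is shorter and, as you note, does not use semisimplicity of $A_0$ at all (it only guarantees that a suitable $T_0$ exists); the paper's decomposition buys extra structural information — the matching of multiplicities and the explicit block form of $C_k$ — which is not needed for the statement itself. One small point you leave implicit: the standing hypothesis controls only the \emph{left} kernel of $C_k$, so your ``symmetric argument'' for the right radical only shows it equals the right kernel of $C_k$; to conclude that it vanishes (and hence that the form is nondegenerate and the left/right kernels of $C_k$ are simultaneously zero, as remarked before the proposition) you must use that $A_k$ is finite-dimensional, so that a bilinear form on $A_k$ with zero left radical is automatically nondegenerate. Since finite-dimensionality of the graded pieces is a blanket assumption of the section, this is a one-line patch rather than a gap, but it should be said explicitly.
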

\begin{proof}
Let $V_1\ldots,V_l$ be all simple representations of $A_0$, so that $A_0=\plusl_{i=1}^l R_i$, where $R_i=V_i\otimes V_i^*$ are matrix algebras. Fix $k\geq 0$. Let $A_k=\bigoplus\limits_{i=1}^l V_i^{d_i}$ as a left $A_0$-module and $\plusl_{i=1}^l (V_i^*)^{e_i}$ as a right $A_0$-module. 

Since $T_0$ is a trace, its restriction to $V_i\otimes V_i^*$ is given by $T_0(u\otimes v)=\alpha_i v(u)$, where $\alpha_i$ is some complex number. Since $T_0$ is nondegenerate, $\alpha_i$ is nonzero.

Let $a\in A_0$, $b,c\in A_k$. Then $(ab)*c=(a*b)*c=a*(b*c)=a(b*c)$. Comparing $C_k$ we get $C_k(ab,c)=a C_k(b,c)$. Similarly $C_k(b,ca)=C_k(b,c)a$. Define a structure of an $A_0$-bimodule on $A_k\otimes_{\CN} A_k$ as follows: $a(b\otimes c):=ab\otimes c$, $(b\otimes c)a:=b\otimes ca$. If we regard $C_k$ as a linear map from $A_k\otimes_{\CN} A_k$ to $A_0$, it will be a map of $C_k$-bimodules.

We have $A_k\otimes A_k=\bigoplus_{i,j=1}^l (V_i\otimes V_j^*)^{d_ie_j}$. Using Schur Lemma we see that $C_k$ restricted to $V_i\otimes V_j^*$ is zero when $i\neq j$. When $i=j$, the map $C_k$ is defined by a matrix $M_i$ of size $d_i\times e_i$: $C_k(u\otimes v)=(M_i)_{ab}(u\otimes v)$ for $u$ in the $a$-th copy of $V_i$ and $v$ the in $b$-th copy of $V_i^*$. 

Since $C_k$ has no left kernel, it follows that $M_i$ has rank at least $d_i$. In particular, $d_i\geq e_i$. Computing the dimension of $A_k$ in two different ways, we get $\suml_{i=1}^l d_i\dim V_i=\suml_{i=1}^l e_i\dim V_i$. It follows that $d_i=e_i$. We get that $M_i$ is a nondegenerate square matrix.

Hence $T_0\circ C_k$ is a nondegenerate bilinear form on $A_k$, as we wanted.
\end{proof}
From now on we fix a nondegenerate trace $T_0$ on $A_0$.

Let $\langle a,b\rangle=T(a*b)=T_0(CT(a*b))$, where $CT$ is the constant term map. The shortness of $*$ implies that $A_k$ is orthogonal to $A_m$ with respect to $\langle\cdot,\cdot\rangle$ when $k\neq m$. Hence if $*$ is nondegenerate then $\langle \cdot,\cdot\rangle$ is a nondegenerate bilinear form.

Similarly to Proposition~3.3 in~\cite{ES} we want a bijection between nondegenerate short star-products and nondegenerate twisted traces.

\begin{defn}
Let $\mc{A}$ be a filtered quantization of $A$ and $g$ be an automorphism of $\mc{A}$ such that $g$ restricted to $\mc{A}_{\le 0}$ is identity. We say that a map $T\colon\mc{A}\to \CN$ is a {\it $g$-twisted trace} if $T(ab)=T(bg(a))$ for all $a,b\in\mc{A}$. We say that $T$ {\it extends} $T_0$ if $T|_{\mc{A}_0}=T_0$.
\end{defn}
\begin{defn}
We say that $T$ is {\it nondegenerate} if $(a,b)=T(ab)$ is nondegenerate on each $\mc{A}_{\le i}$.
\end{defn}

In~\cite{ES} the trace $T_0$ is defined by one number $T_0(1)=T(1)$, so it is clear that different choices $T_0$ give essentially the same set of traces. In our case after changing $T_0$ we should multiply both $T|_{A_i}$ and  $g|_{A_{ij}}$ by  nonzero numbers.

We have the following analog of Proposition~3.3 in~\cite{ES}:
\begin{prop}
\label{PropStarProductsAndTwistedTraces}
There is a one-to-one correspondence between nondegenerate star-products on $A$ and triples $(\mc{A},g,T)$, where $\mc{A}$ is a filtered deformation of $A$, $g$ is an automorphism of $\mc{A}$ equal to identity on $\mc{A}_{\le 0}$ and $T$ is an $s$-invariant $g$-twisted trace that extends $T_0$.
\end{prop}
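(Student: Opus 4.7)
The proof builds on the correspondence between star-products and $\ZN/2\ZN$-equivariant pairs $(\mc{A},\phi)$ established earlier, and proceeds in two directions.

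\emph{Forward direction.} Given a nondegenerate short star-product $*$ with associated data $(\mc{A},s,\phi)$, we define $T\colon\mc{A}\to\CN$ by $T(x):=T_0(CT(\phi^{-1}(x)))$. Shortness forces $T(\phi(A_i)\phi(A_j))=0$ for $i\ne j$, since $(a*b)_0=C_{(i+j)/2}(a,b)$ vanishes whenever $i\ne j$. Consequently the restriction of $T(xy)$ to $\mc{A}_{\le k}$ is block-diagonal with blocks $T_0\circ C_l$ on $\phi(A_l)$, each nondegenerate by the previous proposition, so $T$ is nondegenerate; $s$-invariance of $T$ follows from the $\ZN/2\ZN$-equivariance of $\phi$. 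Using nondegeneracy of $T_0\circ C_i$ on $A_i$, we define $g_i\colon A_i\to A_i$ by the prescription $T_0(C_i(a,b))=T_0(C_i(b,g_i(a)))$, and set $g|_{\phi(A_i)}:=\phi\circ g_i\circ\phi^{-1}$. The twisted-trace identity $T(xy)=T(yg(x))$ holds by construction; associativity $T((xy)z)=T(x(yz))$ combined with nondegeneracy forces $g(xy)=g(x)g(y)$, so $g$ is an automorphism; and $g|_{\mc{A}_{\le 0}}=\id$ follows from $C_0(a,b)=ab$ together with the trace property of $T_0$.

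\emph{Backward direction.} Given $(\mc{A},g,T)$ with $\mc{A}$ carrying its involution $s$, we first derive structural properties of $g$: applying the twist twice yields $T(g(x)g(y))=T(xy)$, so $g$ preserves the form; combining this with $s$-invariance of $T$ and nondegeneracy produces $g\circ s=s\circ g$; and $T(xy)=T(s(x)s(y))=-T(xy)$ for $x\in\mc{A}^+$, $y\in\mc{A}^-$ shows the $s$-eigenspaces are $T$-orthogonal. Define $\phi(A_k)$ as the $T$-orthogonal complement of $\mc{A}_{\le k-1}$ inside $\mc{A}_{\le k}$; eigenspace orthogonality places $\phi(A_k)$ in the $(-1)^k$-eigenspace of $s$, so $\phi$ is $\ZN/2\ZN$-equivariant, and we set $a*b:=\phi^{-1}(\phi(a)\phi(b))$. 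The key technical step is shortness: writing $\phi(a)\phi(b)=\sum_m\phi(d_m)$ for $a\in A_i$, $b\in A_j$, we must show $d_m=0$ for $m<|i-j|$. Since $T(xy)|_{\phi(A_m)\times\phi(A_m)}$ is nondegenerate (as an orthogonal summand of a nondegenerate form), this reduces to $T(\phi(a)\phi(b)\phi(e))=0$ for every $e\in A_m$. When $i\le j$, the twist rewrites this as $T(\phi(b)\cdot\phi(e)g(\phi(a)))$; filtration-preservation of $g$ puts $\phi(e)g(\phi(a))\in\mc{A}_{\le m+i}\subset\mc{A}_{\le j-1}$, which pairs to zero with $\phi(b)\in\phi(A_j)\perp\mc{A}_{\le j-1}$. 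For $j<i$, we regroup as $T(\phi(b)\phi(e)\cdot g(\phi(a)))$, observe $\phi(b)\phi(e)\in\mc{A}_{\le i-1}$, and apply the twist once more together with filtration-preservation of $g$ to transfer the orthogonality onto $g(\phi(a))\in\phi(A_i)\perp\mc{A}_{\le i-1}$. Nondegeneracy of $*$ then follows from that of $T$, and a direct check confirms the two constructions are mutually inverse.

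\emph{Main obstacle.} The delicate point is the asymmetry of the twisted trace: the orthogonality defining $\phi(A_k)$ is one-sided, but shortness requires controlling both orderings of $\phi(a)\phi(b)$. Bridging this demands first extracting from the bare axioms that $g$ preserves the filtration, commutes with $s$, and is an isometry of $T(xy)$; with those symmetries in hand, both cases $i\le j$ and $j<i$ reduce uniformly to the one-sided orthogonality, and the argument parallels~\cite{ES}.
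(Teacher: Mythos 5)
Your proposal is correct and follows essentially the same route as the paper: the same trace $T=T_0\circ CT$ and blockwise-defined $g$ (with multiplicativity extracted from associativity plus nondegeneracy) in the forward direction, and the same orthogonal-complement construction $\phi(A_k)=\mc{A}_{\le k}\cap\mc{A}_{\le k-1}^{\perp}$, with equivariance from $s$-invariance of $T$ and $\gr s=(-1)^d$, and shortness via the orthogonality/twisted-trace argument in the backward direction. The only differences are cosmetic: you treat both orderings $i\le j$ and $j<i$ explicitly in the shortness step (the paper writes out only one case, and your $j<i$ case can be shortened by using directly that $\phi(a)$ lies in the two-sided orthogonal complement of $\mc{A}_{\le i-1}$, without invoking $g$), and you derive $g\circ s=s\circ g$, which the paper's argument does not need.
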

\begin{proof}
Let $*$ be a nondegenerate star-product on $A$, $\mc{A}=(A,*)$. Define $T=T_0\circ CT$, where $CT$ is the constant term map. This map commutes with $s=(-1)^d$. For any $k$, since $\langle\cdot,\cdot\rangle$ is nondegenerate on $A_k$, there exists a linear automorphism $g_k$ such that $\langle a,b\rangle=\langle g_k(b),a\rangle$ for all $a,b\in A_k$. This means $T(ab)=T(bg_k(a))$ for all $a,b\in A_k$. We define $g$ so that $g|_{A_k}=g_k$.  In this way $T$ becomes a $g$-twisted trace. It remains to prove that $g$ is an automorphism of $\mc{A}$.

For all $a,b,c\in \mc{A}$ we have $T(abc)=T(bcg(a))=T(cg(a)g(b))$ on one hand and $T(abc)=T(cg(ab))$ on the other hand. Since $T$ has zero kernel, we get $g(a)g(b)=g(ab)$. Hence $g$ is an automorphism of $\mc{A}$.

Since $T_0$ is a trace on $A_0$, the restriction of $g_0$ to $A_0$ is the identity.

Since $\langle\cdot,\cdot\rangle$ is nondegenerate and $A_i$ is orthogonal to $A_j$ when $i\neq j$, $\langle\cdot,\cdot\rangle$ is nondegenerate on $A_0\oplus\cdots\oplus A_k=\mc{A}_{\le k}$. This precisely means that $T$ is nondegenerate.

On the other hand, suppose that we have $(\mc{A},g,T)$ as above.

 Define $(a,b)=T(ab)$. For all $a,b\in \mc{A}$ we have $(a,b)=(b,g(a))$. Using this for $a\in \mc{A}_{\le k}$ we see that the left and the right orthogonal complement of $\mc{A}_{\le k}$ coincide. For $k\geq 0$ define $\mc{A}_k=\mc{A}_{\le k-1}^{\perp}\cap\mc{A}_{\le k}$. 

Since $T$ is nondegenerate, $\mc{A}_k$ has trivial intersection with $\mc{A}_{\le k-1}$. Counting dimensions we have $\dim\mc{A}_k=\dim\mc{A}_{\le k}-\dim\mc{A}_{\le k-1}=\dim A_k$. It follows that $\mc{A}_k$ is in natural bijection with $\mc{A}_{\le k}/\mc{A}_{\le k-1}=A_k$.

Define $\phi\colon A\to \mc{A}$ using these bijections. Define $*$ by $a*b=\phi^{-1}(\phi(a)\phi(b))$. Since $(A,*)$ is isomorphic to $\mc{A}$, $*$ is a star-product.

Let us prove that $\phi$ is $\ZN/2\ZN$-invariant. Let $a$ be an element of $\mc{A}_k$. Note that $(sa,b)=T(sa\cdot b)=T(a\cdot sb)=(a,sb)$ since $T$ is $s$-invariant. Since $s$ is filtration-preserving, we get $sa\in \mc{A}_{\le k}\cap \mc{A}_{\le k-1}^{\perp}=\mc{A}_k$. On the other hand, since $\gr s=(-1)^d$, $a+(-1)^{k+1}sa$ belongs to $\mc{A}_{\le k-1}$. We deduce that $sa=(-1)^k a$ when $a\in A_k$. It follows that $\phi$ is $\ZN/2\ZN$-invariant.

It remains to check that $*$ is short and nondegenerate. 

We start with shortness. Let $a\in A_k,b\in A_l$. Suppose that $k>l$. 

We note that for $r>0$ we have $\mc{A}_{\le r}^{\perp}=\plusl_{t>r} \mc{A}_t$. So in order to check that $a*b$ belongs to $\plusl_{s\geq k-l} A_s$ it is enough to check that $\phi(a*b)=\phi(a)\phi(b)$  belongs to $\mc{A}_{\le k-l-1}^{\perp}$. 

Let $c\in \mc{A}_{\le k-l-1}$. We want to check that $T(\phi(a)\phi(b)c)=0$. We note that $\phi(b)c\in \mc{A}_{\le k-1}$. Using the definition of $\mc{A}_k$, we deduce that $T(\phi(a)\phi(b)c)=0$.

It remains to prove that $*$ is nondegenerate. Since $T$ is nondegenerate, $(\cdot,\cdot)$ is nondegenerate on each $\mc{A}_{\le k}$. Using the definition of $\mc{A}_{ k}$, we see that $(\cdot,\cdot)$ is nondegenerate on each $A_k$. It follows that $*$ is nondegenerate.

Finally, it is straightforward to check that the maps $*\mapsto (\mc{A},g,T)$ and $(\mc{A},g,T)\mapsto *$ are inverse to each other.
\end{proof}
\subsection{Example: $2\times 2$ matrices}
\label{SubSecExampleTwoByTwo}
Suppose that $A_0$ is either the algebra of $2\times 2$ matrices or diagonal $2\times 2$ matrices. Let $e_1,e_2$ be the diagonal matrix units. In this case $A=A_{11}\oplus A_{12}\oplus A_{21}\oplus A_{22}$, where $A_{ij}=e_i A e_j$. We see that $A_{11}$ and $A_{22}$ are subalgebras, $A_{12}$ is an $A_{11}-A_{22}$ bimodule, $A_{21}$ is an $A_{22}-A_{11}$ bimodule, and there are maps of bimodules $\phi\colon A_{12}\otimes_{A_{22}}A_{21}\to A_{11}$, $\psi\colon A_{21}\otimes_{A_{11}}A_{12}\to A_{22}$. In this case $A_{11},A_{22},A_{21},A_{12}$ form a Morita context.

Let $\mc{A}$ be a filtered deformation of $A$. Then $\mc{A}_{\le 0}=A_0$, so we can similarly define $\mc{A}_{11}$, $\mc{A}_{22}$, $\mc{A}_{12}$, $\mc{A}_{21}$. Since $\mc{A}$ is a filtered deformation, for any $e\in A_0$ and $a\in \mc{A}_{\le k}$ we have $e(a+\mc{A}_{\le k-1})=ea+\mc{A}_{\le k-1}$. It follows that the isomorphism $\gr\mc{A}\cong A$ sends $\gr\mc{A}_{ij}$ to $A_{ij}$.

Let $g$ be an automorphism of $\mc{A}$ that restricts to the identity on $A_0$. From $g(e_ia)=e_ig(a)$ and $g(ae_i)=g(a)e_i$ we deduce that $g$ preserves $\mc{A}_{ij}$ for all pairs $(i,j)$.

Let $T_0$ be the standard matrix trace. Any twisted trace $T$ on $\mc{A}$ satisfies $T(e_ia)=T(ae_i)$. It follows that $T$ is zero on $\mc{A}_{12}$ and $\mc{A}_{21}$. Hence for a fixed $g$ a $g$-twisted trace $T$ is supported on $\mc{A}_{11}\cup \mc{A}_{22}$ and satisfies $T(ab)=T(gb(a))$ for all pairs $i,j\in\{1,2\}$, $a\in A_{ij}, b\in A_{ji}$.

The condition $T|_{A_0}=T_0$ is equivalent to $T(e_1)=T(e_2)=1$.

Using $T(ab)=T(ba)$ for $a\in A_0$, $b\in\mc{A}$ we can prove that $\mc{A}_k=\mc{A}_{\le k}\cap \mc{A}_{\le k-1}^{\perp}$ is an $A_0$-submodule of $\mc{A}$. It follows that the quantization map $\phi$ is an isomorphism of $A_0$-modules. Hence $A_{ij}*A_{kl}$ is a subset of $A_{il}$ when $j=k$ and zero otherwise. Note that star-products in general do not have to satisfy this property if we take $\phi$ that does not respect the $A_0$-module structure.

\subsection{Conjugations, Hermitian star-products, positive traces}
\label{SubSecConjugations}
The notions of conjugation-invariant and Hermitian star-products in~\cite{ES} can be defined in our situation. Similarly to~\cite{ES} there is a connection between Hermitian star-products and Hermitian forms; we will work it out in the example below. This gives a connection between our unitarizability results below and star-products: any unitarizable bimodule $M_{c,c'}$ will give a star-product on the algebra $\begin{pmatrix}
A & I\\
I & A
\end{pmatrix}$, where $A=\CN[x,y]^{C_n}$ and $I$ is a $\CN[x,y]^{C_n}$-submodule of $\CN[x,x^{-1},y]$.


\begin{defn}
\begin{enumerate}
\item
A conjugation on $A$ is an antilinear graded automorphism $\rho\colon A\to A$.
\item
Let $\mc{A}$ be a $\ZN/2$-invariant deformation of $A$. A conjugation of $\mc{A}$ is a filtered antilinear automorphism $\rho$ that commutes with $s$.
\end{enumerate}
\end{defn}
Recall that $A_0$ is a semisimple algebra. Hence $A_0$ is a direct product of matrix algebras and any automorphism of $A_0$ is a composition of inner automorphism and a permutation of matrix algebras of the same size. The antilinear automorphism $\rho|_{A_0}$ is a composition of the standard complex conjugation and an automorphism of $A_0$. Choose $T_0$ satisfying the following two conditions: $T_0$ is invariant under all automorphisms of $A_0$ and $T_0(\ovl{a})=\ovl{T_0(a)}$, where $\ovl{a}$ is the standard complex conjugation on matrices. For example, $T_0$ that is equal to the standard matrix trace on each matrix algebra satisfies these conditions. It follows that $T_0(\rho(a))=\ovl{T_0(a)}$ for any conjugation $\rho$ of $A$.

From now on we fix such $T_0$.

\begin{defn}
\begin{enumerate}
\item
A star-product $*$ on $A$ is conjugation-invariant if\linebreak $\rho(a*b)=\rho(a)*\rho(b)$.
\item
A star-product $*$ on $A$ is Hermitian if $T_0(CT(a*b))=T_0(CT(b*\rho^2(a)))$ for all $a,b\in A$.
\end{enumerate}
\end{defn}
We have a lemma similar to Lemma~3.20 in~\cite{ES}.
\begin{lem}
Let $*$ be a conjugation-invariant nondegenerate star-product on $A$. Then $*$ is Hermitian if and only if the form $(a,b)\mapsto T_0\circ CT(a*\rho(b))$ is Hermitian.
\end{lem}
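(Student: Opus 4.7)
The plan is to rewrite both conditions in terms of a single sesquilinear pairing $\langle a,b\rangle := T_0(CT(a*b))$ and then translate one into the other using conjugation-invariance. In these terms, the Hermiticity of $*$ reads $\langle a,b\rangle = \langle b, \rho^2(a)\rangle$ for all $a,b\in A$, while the form in the statement is $H(a,b) = \langle a, \rho(b)\rangle$, so $H$ being Hermitian reads $\langle a,\rho(b)\rangle = \overline{\langle b,\rho(a)\rangle}$ for all $a,b$.

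The key step is to establish the compatibility identity
\[
\overline{\langle x, y\rangle} = \langle \rho(x), \rho(y)\rangle \qquad\text{for all } x,y\in A.
\]
This uses three ingredients already in place: (i) $\rho$ is graded, so it preserves each $A_k$ and hence commutes with the constant-term projection $CT\colon A\to A_0$; (ii) $*$ is conjugation-invariant, giving $\rho(x*y)=\rho(x)*\rho(y)$; and (iii) the fixed trace $T_0$ satisfies $T_0(\rho(z))=\overline{T_0(z)}$ on $A_0$. Chaining these: $\overline{\langle x,y\rangle}=\overline{T_0(CT(x*y))} = T_0(\rho(CT(x*y))) = T_0(CT(\rho(x)*\rho(y))) = \langle \rho(x),\rho(y)\rangle$.

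Applying the identity with $x=b$, $y=\rho(a)$ gives $\overline{\langle b,\rho(a)\rangle}=\langle \rho(b),\rho^2(a)\rangle$, so $H$ is Hermitian precisely when
\[
\langle a,\rho(b)\rangle = \langle \rho(b),\rho^2(a)\rangle \qquad\text{for all } a,b\in A.
\]
Since $\rho\colon A\to A$ is a bijection, as $b$ ranges over $A$ the element $c:=\rho(b)$ also ranges over all of $A$, so this condition is equivalent to $\langle a,c\rangle = \langle c,\rho^2(a)\rangle$ for all $a,c\in A$, which is exactly the Hermiticity of $*$. This proves both implications simultaneously.

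I do not expect any real obstacle here: once the sesquilinear bookkeeping is set up, the only non-formal point is that $CT$ and $\rho$ commute, which is immediate because $\rho$ is a graded map on $A$ and $CT$ is the projection onto $A_0$; everything else is a bijective substitution via $\rho$. The role of nondegeneracy of $*$ in the hypothesis is essentially cosmetic for this argument, since the equivalence is proved at the level of the defining identities of the two notions of Hermiticity.
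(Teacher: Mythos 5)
Your proof is correct and follows essentially the same route as the paper: the chain $\ovl{T_0(CT(b*\rho(a)))}=T_0(\rho(CT(b*\rho(a))))=T_0(CT(\rho(b)*\rho^2(a)))$ is exactly the paper's computation, and your substitution $c=\rho(b)$ is the paper's change of variable $b\mapsto\rho^{-1}(b)$. Your remark that nondegeneracy is not actually used here is also consistent with the paper's argument.
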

\begin{proof}
The form $(\cdot,\cdot)$ is Hermitian when $(a,b)=\ovl{(b,a)}$ for all $a,b\in A$. We know that $T_0(\rho(a))=\ovl{(T_0(a))}$ for $a\in A_0$. Since $\rho$ is graded, we have $CT(\rho(a))=\rho(CT(a))$ for any $a\in A$. Using this we get

\[\ovl{(b,a)}=\ovl{T_0\circ CT(b*\rho(a))}=T_0\big(\rho(CT(b*\rho(a)))\big)=T_0(CT(\rho(b)*\rho^2(a))).\]

 So $(\cdot,\cdot)$ is Hermitian if and only if

\[T_0(CT(a*\rho(b)))=T_0(CT(\rho(b)*\rho^2(a)))\]

for all $a,b\in A$.

Changing $b$ to $\rho^{-1}(b)$ we see that $(\cdot,\cdot)$ is Hermitian if and only if 

\[T_0(CT(a*b))=T_0(CT(b*\rho^2(a)))\] for all $a,b\in A$. This means that $*$ is Hermitian.
\end{proof}

We have a lemma similar to Lemma~3.23 from~\cite{ES}.
\begin{lem}
\begin{enumerate}
\item
The star-product $*$ on $A$ is conjugation-invariant if and only if $\rho$ is a conjugation on $\mc{A}$ that conjugates $T$.
\item
In this situation $*$ is Hermitian if and only if in addition $\rho^2=g$.
\end{enumerate}
\end{lem}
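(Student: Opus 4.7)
My plan is to leverage Proposition~\ref{PropStarProductsAndTwistedTraces}, which puts nondegenerate short star-products on $A$ in bijection with triples $(\mc{A}, g, T)$, and translate each condition of the lemma through this correspondence.

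For the forward implication in part 1, I start with a conjugation-invariant $*$. Under the correspondence one sets $\mc{A} = (A, *)$ with $\phi = \id$, so $\rho$ viewed on $\mc{A}$ is literally the same antilinear map as on $A$. The identity $\rho(a * b) = \rho(a) * \rho(b)$ asserts exactly that $\rho$ is an antilinear algebra automorphism of $\mc{A}$; being graded it is automatically filtered and commutes with $s = (-1)^d$. Since $T = T_0 \circ CT$ and $\rho$ commutes with the constant-term projection $CT$ (as $\rho$ is graded), the assumed identity $T_0(\rho(a)) = \overline{T_0(a)}$ on $A_0$ yields $T \circ \rho = \overline{T}$. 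For the reverse direction, given $(\mc{A}, g, T)$ and a conjugation $\rho$ of $\mc{A}$ satisfying $T \circ \rho = \overline{T}$, the key step is to verify that $\rho$ preserves each piece $\mc{A}_k = \mc{A}_{\le k-1}^{\perp} \cap \mc{A}_{\le k}$ of the decomposition used to construct $\phi$. The essential computation is
\[(\rho(a), \rho(b)) = T(\rho(a)\rho(b)) = T(\rho(ab)) = \overline{T(ab)} = \overline{(a, b)},\]
which shows $\rho$ sends $\mc{A}_{\le k-1}^{\perp}$ to itself; combined with $\rho(\mc{A}_{\le k}) = \mc{A}_{\le k}$, this gives $\rho(\mc{A}_k) = \mc{A}_k$. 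Consequently $\phi$ intertwines the two copies of $\rho$ (on $A$ and on $\mc{A}$), and then $\rho(a * b) = \rho(a) * \rho(b)$ follows by unpacking $a * b = \phi^{-1}(\phi(a) \phi(b))$ and using that $\rho$ is an algebra automorphism of $\mc{A}$.

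For part 2, I assume the setup of part 1. The Hermitian condition on $A$ reads $T_0(CT(a * b)) = T_0(CT(b * \rho^2(a)))$ for all $a, b \in A$. Transferring through $\phi$ and using that $\phi$ intertwines $\rho^2$, this becomes $T(xy) = T(y \rho^2(x))$ for all $x, y \in \mc{A}$. Combining with the $g$-twisted trace identity $T(xy) = T(y g(x))$ yields $T(y (g(x) - \rho^2(x))) = 0$ for all $x, y$. Nondegeneracy of the pairing $(x, y) \mapsto T(xy)$ on each $\mc{A}_{\le k}$, which is part of the hypothesis on $T$, then forces $g(x) = \rho^2(x)$ for every $x$, i.e.\ $g = \rho^2$. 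The converse direction is immediate by running the chain of equalities backwards.

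The main obstacle is the reverse direction of part 1: the quantization map $\phi$ is defined not by an explicit formula but via orthogonal complements for the pairing $(a, b) = T(ab)$, so one must verify carefully that this decomposition is preserved by $\rho$. This is exactly where the hypothesis that $\rho$ conjugates $T$ is used, since it allows one to exchange $\rho$ with the pairing at the cost of complex conjugation, which still preserves orthogonal complements.
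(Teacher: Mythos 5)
Your proposal is correct and follows essentially the same route as the paper: the forward direction of part 1 via $T=T_0\circ CT$ and gradedness of $\rho$, the reverse direction by checking $(\rho(a),\rho(b))=\ovl{(a,b)}$ so that $\rho$ preserves each $\mc{A}_k=\mc{A}_{\le k-1}^{\perp}\cap\mc{A}_{\le k}$ and hence intertwines with the quantization map $\phi$, and part 2 by identifying the Hermitian condition with $T$ being $\rho^2$-twisted and using nondegeneracy of $T$ to force $g=\rho^2$. No substantive differences from the paper's argument.
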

\begin{proof}
\begin{enumerate}
\item
Suppose that $*$ is conjugation-invariant. Then $\rho$ is by definition a conjugation on $\mc{A}$. We have \[T(\rho(a))=T_0(CT(\rho(a))=T_0(\rho(CT(a))=\ovl{(T_0(CT(a))}=\ovl{T(a)}.\]

Suppose that $\rho$ is a conjugation on $\mc{A}$ that conjugates $T$. We want to prove that the corresponding short star-product $*$ on $A$ satisfies\linebreak $\rho_1(a*b)=\rho_1(a)*\rho_1(b)$, where $\rho_1=\gr\rho$. Let $\phi\colon A\to \mc{A}$ be the corresponding quantization map. Since $\phi$ is a linear isomorphism, it is enough to prove that $\phi(\rho_1(a*b))=\phi(\rho_1(a)*\rho_1(b))$. 

In order to do this, it is enough to prove that $\phi\rho_1=\rho\phi$. Indeed, we constructed $*$ so that\[\phi(\rho_1(a)*\rho_1(b))=\phi(\rho_1(a))\phi(\rho_1(b)).\] This is equal to \[\rho(\phi(a))\rho(\phi(b))=\rho(\phi(a)\phi(b))=\rho(\phi(a*b))=\phi(\rho_1(a*b)).\]

Note that the corresponding bilinear form $(a,b)=T(ab)$ satisfies \[(\rho(a),\rho(b))=T(\rho(a)\rho(b))=T(\rho(ab))=\ovl{T(ab)}=\ovl{(a,b)}.\] Since $\rho$ preserves both $\mc{A}_{\le n}$ and $\mc{A}_{\le n-1}$, it preserves $\mc{A}_n:=(\mc{A}_{\le n-1})^{\perp}\cap \mc{A}_{\le n}$. 

By construction we have $\phi(a+\mc{A}_{\le n-1})=a$ when $a\in \mc{A}_n$. Since $\rho_1=\gr\rho$ for any $a\in\mc{A}_n$ we have $\rho(a)+\mc{A}_{\le n-1}=\rho_1(a+\mc{A}_{\le n-1})$. It follows that $\phi^{-1}(\rho(a))=\rho_1(\phi^{-1}(a))$, hence $\phi\rho_1=\rho\phi$ as we wanted.
\item
Suppose that $*$ is Hermitian. By definition this means that $T$ is \linebreak $\rho^2$-twisted. Since $T$ has no kernel, we get $g=\rho^2$.

Suppose that $T$ is a $\rho^2$-twisted trace on $\mc{A}$. Note that $T_0(CT(a))=T(\phi(a))$ for any $a\in A$. It follows that $T_0\circ CT$ is $\rho_1^2$-twisted, as required.
\end{enumerate}
\end{proof}
\subsection{Bimodules and positive definite Hermitian forms}
As in subsection~\ref{SubSecExampleTwoByTwo}, let $A_0=\Mat_2(\CN)$ or $\CN\oplus\CN$, so that $A$ and $\mc{A}$ correspond to Morita contexts. Let us change notation: graded algebra is $\begin{pmatrix}
A & M\\
N & B
\end{pmatrix}$, filtered deformation is $\begin{pmatrix}
\mc{A} & \mc{M}\\
\mc{N}&\mc{B}
\end{pmatrix}$. Assume that there exists antilinear isomorphisms $\rho_1\colon \mc{A}\to\mc{B}$, $\rho_2\colon\mc{B}\to\mc{A}$, $\phi\colon \mc{M}\to\mc{N}$, $\psi\colon \mc{N}\to \mc{M}$ such that \[\rho\begin{pmatrix}
a & m\\
n & b
\end{pmatrix}=\begin{pmatrix}
\rho_2(b) & \psi(n)\\
\phi(m) & \rho_1(a)
\end{pmatrix}\] is a conjugation. For $\mc{M},\mc{N}$ this means that $\phi\colon \mc{M}\to\ovl{\mc{N}}_{\rho}$ is an isomorphism of bimodules, where we use $\rho_1,\rho_2$ to define bimodule structure on $\ovl{\mc{N}}_{\rho}$: $a.n=\rho_1(a)n$, $n.b=n\rho_2(b)$. A similar statement holds for $\psi$. Since $\rho^2$ is an automorphism of the matrix algebra, we get that $\psi\phi$ is an isomorphism between $\mc{M}$ and $\mc{M}$ with bimodule structure twisted by $\rho_1\rho_2,\rho_2\rho_1$ respectively. Abusing notation denote the restriction of $\rho$ to $A\oplus B$ by $\rho$.

Recall that a trace $T$ on the matrix algebra is a pair of traces $T_1$, $T_2$ on $\mc{A}$,$\mc{B}$. A trace $T$ is $\rho^2$-twisted if $T_1$, $T_2$ are $\rho^2$-twisted and \[T_1(mn)=T_2(n\psi\phi(m)),\quad T_2(nm)=T_1(m\phi\psi(n)).\]

Consider the sesquilinear form $(k,l):=T_1(k\phi(l))$ for $k,l\in \mc{M}$. Note that it is invariant in the following sense: \begin{multline*}
(am_1,m_2)=T(am_1\psi(m_2))=T(m_1\psi(m_2)\rho_2\rho_1(a))=\\
T(m_1\psi(m_2\rho_1(a)))=(m_1,m_2\rho_1(a)).
\end{multline*} When this form is positive definite we say that $T$ is positive for bimodules. 

Hence from a matrix algebra and a trace we get an invariant sesquilinear form. On the other hand, suppose that we have algebras $\mc{A},\mc{B}$, an $\mc{A}-\mc{B}$-bimodule $\mc{M}$, an antilinear isomorphism $\rho=\rho_1\oplus \rho_2\colon \mc{A}\oplus\mc{B}\to \mc{B}\oplus\mc{A}$, an isomorphism $g\colon \mc{M}\to \mc{M}_{\rho^2}$ and a sesquilinear form on $\mc{M}$ such that \[(am_1,m_2)=(m_1,m_2\rho(a)),\]
\[(m_1\rho(a),m_2)=(m_1,am_2).\]
We say that such forms are $\rho$-invariant. Note that, strictly speaking, such forms are $\rho_1$-invariant, we do not use $\rho_2$. Below we will get a $\rho_2$-invariant form on $\mc{N}$, so that the picture becomes symmetric.

Define $\mc{N}$ to be $\ovl{\mc{M}}_{\rho^{-1}}$. Then we can define $\phi\colon \mc{M}\to \ovl{\mc{N}}_{\rho}$ to be the identity map on the underlying set and $\psi\colon\mc{N}\to\ovl{\mc{M}}_{\rho}$ to be $g$ on the underlying set. We see that both $\phi,\psi$ are maps of bimodules. Hence we recover the matrix algebra $\begin{pmatrix}
\mc{A} & \mc{M}\\
\mc{N}&\mc{B}
\end{pmatrix}$ and a conjugation.

Recall that for a twisted bimodule $\mc{N}=\ovl{\mc{M}}_{\rho^{-1}}$ we use $b.n$ to mean the action on $\mc{N}$ and $an$ to mean the action on $\mc{M}$, so that $b.n=\rho^{-1}(a)n$.

For $n_1,n_2\in \mc{N}=\ovl{\mc{M}}_{\rho^{-1}}$ define $(n_1,n_2)_{\mc{N}}=(n_2,n_1)_{\mc{M}}$. Then 
\begin{multline*}(b.n_1,n_2)_{\mc{N}}=(\rho^{-1}(b)n_1,n_2)_{\mc{N}}=(n_2,\rho^{-1}(b)n_1)_{\mc{M}}=\\
(n_2b,n_1)_{\mc{M}}=(n_1,n_2b)_{\mc{N}}=(n_1,n_2.\rho(b)),
\end{multline*} hence $(\cdot,\cdot)_{\mc{N}}$ is also $\rho$-invariant, but here we use $\rho_2$, not $\rho_1$.

 We turn to the traces:


\begin{prop}
\label{PropFormsAndTraces}
\begin{enumerate}
\item
$\rho$-invariant sesquilinear forms on $\mc{M}$ are in one-to-one correspondence with $\rho^2$-twisted traces $\mc{M}\otimes_{\mc{B}}\mc{N}\to \CN$. Each invariant form also gives a $\rho^2$-twisted trace on $\mc{N}\otimes_{\mc{A}}\mc{M}$.
\item
Suppose that $\mc{M}$ and $\mc{N}$ provide a Morita equivalence between $\mc{A}$ and $\mc{B}$. Then there is one-to-one correspondence between $\rho$-invariant sesquilinear forms on $\mc{M}$ and $\begin{pmatrix}
\rho^2 & g\\
h & \rho^2
\end{pmatrix}$-twisted traces on $\begin{pmatrix}
\mc{A} & \mc{M}\\
\mc{N}& \mc{B}
\end{pmatrix}$. Here $h$ is a unique isomorphism from $\mc{N}$ to $\mc{N}_{\rho^2}$ such that $g\otimes h=\rho^2$.
\end{enumerate}
\end{prop}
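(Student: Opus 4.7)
The plan is to construct the bijections in both parts explicitly, using the identification $\mc{N}=\ovl{\mc{M}}_{\rho^{-1}}$ to translate between sesquilinear forms on $\mc{M}$ and linear maps out of $\mc{M}\otimes_{\mc{B}}\mc{N}$.

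For part (1), given a $\rho$-invariant sesquilinear form $(\cdot,\cdot)$ on $\mc{M}$, I would define
\[
T\colon \mc{M}\otimes_{\mc{B}}\mc{N}\longrightarrow\CN,\qquad T(m_1\otimes\bar m_2):=(m_1,m_2),
\]
where $\bar m_2\in\mc{N}$ is the element corresponding to $m_2\in\mc{M}$. The conjugate-linearity of $m\mapsto\bar m$ cancels the antilinearity of $(\cdot,\cdot)$ in its second slot, so $T$ is linear in both tensor factors. Balancedness over $\mc{B}$, i.e.\ $T(m_1 b\otimes\bar m_2)=T(m_1\otimes b\cdot\bar m_2)$, unwinds via $b\cdot\bar m_2=\overline{\rho^{-1}(b)m_2}$ into the identity $(m_1 b,m_2)=(m_1,\rho^{-1}(b)m_2)$, which is one half of $\rho$-invariance. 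The $\rho^2$-twisted trace identity $T(am_1\otimes\bar m_2)=T(m_1\otimes\bar m_2\cdot\rho^2(a))$ unwinds, using $\bar m_2\cdot\rho^2(a)=\overline{m_2\rho(a)}$, into $(am_1,m_2)=(m_1,m_2\rho(a))$, which is the other half. The inverse map is $T\mapsto(\cdot,\cdot)$ with $(m_1,m_2):=T(m_1\otimes\bar m_2)$, and the two assignments are mutually inverse by construction. The second sentence of part (1) is obtained by applying the same argument to the induced $\rho_2$-invariant form on $\mc{N}$ given by $(\bar m_1,\bar m_2)_{\mc{N}}=(m_2,m_1)$, as derived in the paragraphs preceding the proposition.

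For part (2), the matrix-algebra analysis from subsection~\ref{SubSecExampleTwoByTwo} shows that a $\begin{pmatrix}\rho^2 & g\\ h & \rho^2\end{pmatrix}$-twisted trace on $\begin{pmatrix}\mc{A} & \mc{M}\\ \mc{N} & \mc{B}\end{pmatrix}$ vanishes on $\mc{M}$ and $\mc{N}$, decomposes as a pair $(T_1,T_2)$ of $\rho^2$-twisted traces on $\mc{A}$ and $\mc{B}$, and the off-diagonal components of the trace relation give two compatibilities $T_1(mn)=T_2(n\cdot g(m))$ and $T_2(nm)=T_1(m\cdot h(n))$ for $m\in\mc{M}$, $n\in\mc{N}$. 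In the Morita situation, $\mc{M}\otimes_{\mc{B}}\mc{N}\cong\mc{A}$ and $\mc{N}\otimes_{\mc{A}}\mc{M}\cong\mc{B}$ as bimodules, so the two traces produced by part (1) from a form $(\cdot,\cdot)$ on $\mc{M}$ yield precisely such a pair. Uniqueness of $h$ given $g$ and $\rho^2$ is forced by Morita: the equivalence $-\otimes_{\mc{A}}\mc{M}$ sends $\rho^2$ on $\mc{A}$ to a unique automorphism of $\mc{B}$, and writing that automorphism as $g\otimes h$ pins down $h$.

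The main obstacle will be verifying the two off-diagonal compatibility conditions when $(T_1,T_2)$ is assembled from a single form on $\mc{M}$. I plan to expand $T_1(mn)$ and $T_2(n\cdot g(m))$ in terms of $(\cdot,\cdot)$ using $g=\psi\phi$, the definitions of $\phi$ and $\psi$ in terms of $\rho$, and the induced form on $\mc{N}$, so that both sides become values of $(\cdot,\cdot)$ on pairs of elements of $\mc{M}$; the resulting equalities then collapse to instances of $\rho$-invariance. The same bookkeeping will show that $g\otimes h=\rho^2$ is automatic, so the specific pair $(g,h)$ appearing in the statement is the one produced by the construction.
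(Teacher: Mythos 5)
Your part (1) is exactly the paper's argument: the same dictionary $T(m_1\otimes \ovl{m_2})=(m_1,m_2)$, with balancedness over $\mc{B}$ and the $\rho^2$-twist condition unwinding to the two halves of $\rho$-invariance, and the trace on $\mc{N}\otimes_{\mc{A}}\mc{M}$ coming from the induced form on $\mc{N}$ (the paper makes the identification $\mc{N}_{\rho^{-1}}\cong\mc{M}$ via $g$ explicit). For part (2) you take the paper's route as well --- the Morita identifications $\mc{M}\otimes_{\mc{B}}\mc{N}\cong\mc{A}$, $\mc{N}\otimes_{\mc{A}}\mc{M}\cong\mc{B}$ together with reducing the off-diagonal compatibilities to invariance of the form; the verification you defer is the same short check the paper performs, namely $T_1(mn)=(m,n)=T_2(ng(m))$ and $T_2(nm)=(g^{-1}m,n)=T_1(g^{-1}(m)n)=T_1(mh(n))$, the last step using $g\otimes h=\rho^2$ and the $\rho^2$-twistedness of $T_1$.
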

\begin{proof}
\begin{enumerate}
\item
Let $(\cdot,\cdot)$ be a $\rho$-twisted form. Consider the map $T\colon \mc{M}\otimes_{\mc{B}}\mc{N}\to \CN$ given by $T(m\otimes n)=(m,n)$. We have \[T(mb\otimes n)=(mb,n)=(m,\rho^{-1}(b)n)=T(m\otimes b.n),\] hence $T$ is well-defined. We also have \begin{multline*}
T(am_1\otimes m_2)=(am_1,m_2)=(m_1,m_2\rho(a))=\\
(m_1,m_2.\rho^2(a))=T((m_1\otimes m_2)\rho^2(a))
\end{multline*} In other words, $T$ is a $\rho^2$-twisted trace.

In the other direction, if $T$ is a $\rho^2$-twisted trace we can define $(\cdot,\cdot)$ using $(m,n)=T(m\otimes n)$. The same computations as above show that $(\cdot,\cdot)$ is $\rho$-invariant.

If we use the same reasoning for $\mc{N}$ instead of $\mc{M}$ we obtain a $\rho^2$-twisted trace on $\mc{N}\otimes_{\mc{B}}\mc{N}_{\rho^{-1}}$. Note that $\mc{N}_{\rho^{-1}}=\mc{M}_{\rho^{-2}}$. Applying $\id_{\mc{N}}\otimes g$ we get $\mc{N}\otimes\mc{N}_{\rho^{-1}}\cong \mc{N}\otimes\mc{M}$. Hence the second trace $T'$ is given by \[T'(n\otimes m)=(n,g^{-1}m)_{\mc{N}}=(g^{-1}m,n).\]
\item
Since $\mc{M}\otimes_{\mc{A}}\mc{N}$ is isomorphic to $\mc{A}$, there is one-to-one correspondence between invariant forms on $\mc{M}$ and twisted traces on $\mc{A}$, similarly for $\mc{B}$. It remains to prove that every such pair of traces $T_1$, $T_2$ on $\mc{A}$, $\mc{B}$, satisfies \[T_1(mn)=T_2(ng(m)),\quad T_2(nm)=T_1(mg(n)).\] Indeed, $T_1(mn)=(m,n)=T_2(ng(m))$ and \[T_2(nm)=(g^{-1}m,n)=T_1(g^{-1}(m)n)=T_1(mh(n)).\] On the last step we used that $g\otimes h=\rho^2$ and $T_1$ is $\rho^2$-twisted.
\end{enumerate}
\end{proof}

In the case when $\mc{A}=\mc{B}=\mc{M}$, $\rho_1=\rho_2=\rho$ we have just one $\rho^2$-invariant trace $T$ on $\mc{A}$. In this case $\mc{N}=\mc{A}$, the corresponding isomorphism of bimodules sends $a\in\mc{N}$ to $\rho(a)\in\mc{A}$. We have $\mc{M}\otimes_{\mc{B}}\mc{N}=\mc{A}$, so the corresponding trace is a $\rho^2$-twisted trace on $\mc{A}$.  The tensor product $\mc{M}\otimes_{\mc{A}} \mc{M}_{\rho^{-1}}$ is isomorphic to $\mc{A}$ via $a\otimes b\mapsto a\rho(b)$, so the positivity condition is $T(a\rho(a))>0$. We recover the positivity condition from~\cite{EKRS} in this case.

In the case we are interested in, namely $\mc{A},\mc{B}$ are ($q$-)deformations of Kleinian singulartieis, bimodule positivity implies that the corresponding traces on $\mc{A}$, $\mc{B}$ are nondegenerate. It follows that the trace on the matrix algebra is also nondegenerate. Using Proposition~\ref{PropStarProductsAndTwistedTraces} we obtain a star-product on the algebra $\begin{pmatrix}
A & M\\N & B
\end{pmatrix}$.

\section{Unitarizability in the case of generic parameter.}
\label{SecUsualUnitarizability}
\subsection{Quantizations and bimodules via differential operators}
Let $c=(c_1,\ldots,c_n)$ be a sequence of complex numbers. When none of $c_i-c_j$, $i\neq j$, are integers, we have
\begin{lem}[\cite{BEF},Proposition 2.11]
Consider the algebra $A_c$ of differential operators with a pole at only possibly zero that preserve each set $x^{c_i}\CN[x]$, $i=1,\ldots,n$. This algebra is generated by $v=x$, $z=E=x\partial_x$ and $u=x^{-1}(E-c_1)\cdots(E-c_n)$ and the defining set of relations is $[z,u]=-u$, $[z,v]=v$, $uv=P(z+\tfrac12)$, $vu=P(z-\tfrac12)$, where $P(t)=\prod_{i=1}^n (t-c_i+\tfrac12)$.
\end{lem}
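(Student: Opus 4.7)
The plan is to proceed in three steps: verify that $u,v,z$ lie in $A_c$ and satisfy the four relations, describe $A_c$ explicitly as a $\CN$-vector space, and then conclude both surjectivity of the map from the abstract generalized Weyl algebra and sufficiency of the relations.

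First I would check containment and relations directly on the basis $x^{c_i+j}$, $j\ge 0$. Clearly $v=x$ and $z=E$ preserve each $x^{c_i}\CN[x]$, and $u\cdot x^{c_i+j}=\prod_{k=1}^{n}(c_i+j-c_k)\,x^{c_i+j-1}$ vanishes at $j=0$ because of the $k=i$ factor, so $u$ lies in $A_c$. The relations $[z,v]=v$ and $[z,u]=-u$ are immediate from $[E,x^r]=r\,x^r$, while $uv$ and $vu$ are computed using the identity $(E-c)\,x^k=x^k(E-c+k)$; this yields $uv=\prod_i(E-c_i+1)$ and $vu=\prod_i(E-c_i)$, which equal $P(z+\tfrac12)$ and $P(z-\tfrac12)$ respectively with the stated $P$.

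Next I would describe $A_c$. Every operator in $\CN[x,x^{-1},\partial_x]$ can be uniquely written as a finite sum $\sum_{k\in\ZN}x^{k}P_{k}(E)$ with $P_k\in\CN[E]$, since $\partial_x=x^{-1}E$. Applying $x^{k}P_{k}(E)$ to $x^{c_i+j}$ gives $P_{k}(c_i+j)\,x^{c_i+j+k}$, so the condition of preserving every $x^{c_i}\CN[x]$ is automatic for $k\ge 0$, and for $k=-m<0$ it is exactly that $P_{-m}$ vanishes at $c_i,c_i+1,\ldots,c_i+m-1$ for each $i$. Under the genericity hypothesis the $nm$ numbers $\{c_i+j\}$ are pairwise distinct, so this is equivalent to $P_{-m}$ being divisible by $Q_m(E):=\prod_{i=1}^{n}\prod_{j=0}^{m-1}(E-c_i-j)$. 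A direct induction using $\prod_i(E-c_i)\,x^{-1}=x^{-1}\prod_i(E-c_i-1)$ shows $u^{m}=x^{-m}Q_m(E)$, so $u^m\,R(z)$ for $R\in\CN[E]$ realises every admissible $x^{-m}P_{-m}(E)$, while $v^k\,R(z)=x^kR(E)$ realises every element in $x$-degree $k\ge 0$. Hence $u,v,z$ generate $A_c$.

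Finally I would check that the relations are defining. Using only $[z,u]=-u$, $[z,v]=v$, $uv=P(z+\tfrac12)$ and $vu=P(z-\tfrac12)$, one can rewrite any word in $u,v,z$ in the normal form $\sum_{k>0}u^{k}p_{k}(z)+p_0(z)+\sum_{k>0}v^{k}q_{k}(z)$: the commutation relations push $z$ to the right, and the two product relations collapse any $uv$ or $vu$ into a polynomial in $z$. Thus the abstract algebra $B$ presented by these generators and relations is spanned by this PBW-type family. Its image in $A_c$ consists of $x^{-k}Q_k(E)p_k(E)$, $p_0(E)$, and $x^{k}q_k(E)$, which are linearly independent in $\CN[x,x^{-1},\partial_x]$ by inspection of the $x$-grading. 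Therefore the surjection $B\twoheadrightarrow A_c$ is an isomorphism. The main obstacle in the argument is the bookkeeping in step three: producing the normal form from the four relations and then matching it cleanly to the description of $A_c$ from step two, so that surjectivity and the absence of further relations are established simultaneously.
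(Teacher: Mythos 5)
Your proposal is correct, and it follows essentially the same route as the proof the paper relies on (the argument of \cite{BEF} reproduced in the paper's proof of the $q$-analogue, Lemma~\ref{LemQAlgOfDiffOp}): decompose operators as $\sum_k x^kP_k(E)$, translate preservation of the $x^{c_i}\CN[x]$ into vanishing/divisibility conditions on the $P_k$ for $k<0$, realize these via $u^m=x^{-m}Q_m(E)$ to get generation, and then use the normal form $u^kp(z),\,v^kq(z)$ together with linear independence in $\CN[x,x^{-1},\partial_x]$ (faithfulness of the action) to see the relations are defining.
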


The proof of this proposition can be found in~\cite{BEF}. Also Lemma~\ref{LemQAlgOfDiffOp} will have a similar proof.

We say that $c$ is a generic parameter if none of $c_i-c_j$, $i\neq j$ are integers.

\begin{rem}
For non-generic parameters the lemma should be modified as follows. If we have $c_1\leq c_2\leq \cdots\leq c_l$ a sequence of parameters with integer difference. Then the elements $u,v,z$ defined as in the lemma, preserve each of the sets $x^{c_i}\C[x]$,  $x^{c_{i-1}}\C[x]\oplus x^{c_i}\C[x]\ln x$, and so on until \[x^{c_1}\C[x]\oplus x^{c_2}\C[x]\ln x\oplus\cdots x^{c_i}(\ln x)^{i-1}\C[x].\] Then the set of differential operators with a pole at only possibly zero that preserve each of these sets for all $i$ from $1$ to $n$ should coincide with an algebra generated by $u,v,z$. We do not need the lemma in this generality since we use genericity condition several times below.
\end{rem}

Suppose that $c$ and $c'$ are two generic parameters such that $c_i-c'_i$ are integers. Consider the $A_c$-$A_{c'}$-bimodule $M=M_{c,c'}$ that consists of differential operators with pole at only possibly zero that send each $x^{c'_i}\CN[x]$ to $x^{c_i}\CN[x]$.

\begin{lem}
\label{LemHowMCCPrimeLooks}
\begin{enumerate}
\item
The module $M_{c,c'}$ is the direct sum of its $\ad z$-eigenspaces. The eigenspace of weight $j$ is $x^j R_j(z)\CN[z]$, where $R_j$ is a monic polynomial with simple zeroes at $c'_i,\ldots,c_i-j-1$ for all $i$ such that $c_i-c'_i-j>0$.
\item
$M_{c,c'}$ is a Harish-Chandra bimodule in the sense of~\cite{LosevHarishChandra}: there exists an increasing filtration $M_{\le j}$ with $\cup_{j} M_{\le j}=M_{c,c'}$ compatible with filtrations on $A_c,A_{c'}$ with the following properties:
\begin{itemize}
\item
For each $j$ the subspace $M_{\le j}$ is preserved by $\ad z$ and $[u,M_{\le j}]\subset M_{\le n+j-2}$, $[v,M_{\le j}]\subset M_{\le n+j-2}$.
\item
The previous condition makes $\gr M_{c,c'}$ into $\gr A_c\cong \CN[x,y]^{C_n}\cong \gr A_{c'}$-bimodule such that the action on the left coincides with the action on the right. Then $\gr M_{c,c'}$ is a finitely generated $\gr A_c$-module.
\end{itemize}
\end{enumerate}
\end{lem}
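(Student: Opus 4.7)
The plan for part 1 is to decompose $M_{c,c'}$ by $\ad z$-weight and directly identify each weight space. Every element of $\CN[x, x^{-1}, \partial_x]$ has a unique expression as $\sum_j x^j g_j(z)$ with $g_j \in \CN[z]$, so the ambient algebra, and hence $M$, carries an $\ad z$-weight decomposition. A weight-$j$ element $x^j g_j(z)$ acts on $x^{c'_i + k}$ (for $k \in \ZN_{\geq 0}$) as $g_j(c'_i + k) x^{j + c'_i + k}$; requiring this to lie in $x^{c_i}\CN[x]$ forces $g_j$ to vanish at $c'_i, c'_i + 1, \ldots, c_i - j - 1$ whenever $c_i - c'_i - j > 0$. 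Genericity of $c$ guarantees that the prescribed zeros coming from different indices $i$ lie in pairwise distinct $\ZN$-cosets and hence are simple and distinct, identifying $R_j$ as the stated monic polynomial.

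For part 2, set $a_j := \deg R_j - \deg R_{j+1} = |\{i : c_i - c'_i > j\}|$, and let $b_j$ be the integer correction determined by $b_0 = 0$, $b_{j+1} = b_j - a_j$ for $j \geq 0$, and $b_{j+1} = b_j + n - a_j$ for $j \leq -1$. Define
\[M_{\le d} := \bigoplus_j \bigl\{x^j R_j(z) p(z) : p \in \CN[z],\ n|j| + 2\deg p + 2 b_j \leq d\bigr\}.\]
This is an exhaustive $\ad z$-stable filtration, and substituting the $b_j$-recursion shows that the left actions of $v, u_c$ and the right actions of $v, u_{c'}$ each raise filtered degree by exactly $n$, matching the $A_c, A_{c'}$-filtrations.

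The central calculation is the commutator bound $[v, M_{\le d}] \subset M_{\le d + n - 2}$, together with its analogue for $u$ interpreted as $u_c m - m u_{c'}$. For $v$, one computes $[v, x^j R_j p] = x^{j+1}\bigl(R_j(z) p(z) - R_j(z+1) p(z+1)\bigr)$; the identity $\gcd(R_j(z), R_j(z+1)) = R_{j+1}(z)$, verified by observing that a common zero $\alpha$ requires both $\alpha$ and $\alpha + 1$ to be zeros of $R_j$, which forces $\alpha$ to be a zero of $R_{j+1}$, lets us factor out $R_{j+1}(z)$, leaving a polynomial $s(z) p(z) - t(z) p(z+1)$ with $s, t$ monic of degree $a_j$ whose leading $z^{a_j}$-terms cancel, so its degree is at most $a_j + \deg p - 1$. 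Plugging the $b_j$-recursion into the filtered-degree estimate yields exactly the bound $d + n - 2$. For $u$, the analogous factorizations $\prod_i(z + j - c_i) R_j(z) = R_{j-1}(z) S(z)$ and $R_j(z-1) \prod_s(z - c'_s) = R_{j-1}(z) T(z)$ with $S, T$ monic of degree $n - a_{j-1}$ yield the same top-term cancellation. Finite generation of $\gr M$ then follows because, outside the window $|j| \leq \max_i |c_i - c'_i|$, $a_j$ stabilizes (to $0$ on the positive side and to $n$ on the negative side), so iterating the $v$-action (resp.\ $u$-action) in $\gr M$ carries the outermost symbol to the lowest-degree element in each further weight; the finitely many symbols $\overline{x^j R_j(z)}$ for $|j| \leq \max_i |c_i - c'_i|$ therefore generate $\gr M$ over $\gr A_c$. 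Coincidence of left and right actions on $\gr M$ is then immediate from the $[u, -]$, $[v, -]$, $[z, -]$ bounds. The main obstacle I anticipate is identifying the correction $b_j$: without it, even the naive filtered-degree compatibility with $A_c$ fails, and pinning down the correct recursion requires careful tracking of how $\deg R_j$ evolves with $j$.
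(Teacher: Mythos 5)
Your proposal is correct and follows essentially the same route as the paper: the same weight-by-weight identification of $M_{c,c'}$ in part 1, and in part 2 the same filtration with the same leading-term cancellation in the commutators, since your degree function $n|j|+2\deg p+2b_j$ equals the paper's simpler choice $nj+2\deg(R_jp)$ minus the constant $2\deg R_0$. The only difference is bookkeeping: by filtering via the total degree of the full polynomial $R_jp$, the paper avoids the $b_j$ recursion and the $\gcd/$factorization steps (e.g.\ $\gcd(R_j(z),R_j(z+1))=R_{j+1}(z)$), which in your version serve only to re-express the same degree bound.
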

\begin{proof}
\begin{enumerate}
\item
For $a,b\in\CN$, $m=\sum x^i S_i(z)\in \CN[x,x^{-1},\partial_x]$ we have the following: $mx^a$ belongs to $x^b\CN[x]$  if and only if each $x^i S_i(z)x^a$ belongs to $x^b\CN[x]$. It follows that $m=\sum x^i S_i(z)$ belongs to $M_{c,c'}$ if and only if each $x^i S_i(z)$ belongs to $M_{c,c'}$. This proves the first statement. 

Consider $x^j R(z)\in M_{c,c'}$. This element sends $x^{c'_i+k}$ to $x^{c'_i+k+j}R(c'_i+k)$. Hence when $k\geq 0$ and $c'_i+k+j<c_i$ we should have $R(c'_i+k)=0$. It follows that $R$ has roots $c'_i+k$ for all $k$ such that $0\le k<c_i-c'_i-j$. This proves the second statement.
\item
Denote  $M:=M_{c,c'}$ and denote by $M_j$ is an $\ad z$ eigenspace of $M$ of weight $j$. 


Let $M_{\le k}$ consist of all $x^j R(z)$ such that $nj+2\deg R\le k$. Note that for negative $j$ with $\abs{j}$ large enough the minimal possible degree of $R$ is $-nj$ plus some constant. This proves that $M_{\le k}$ is finite-dimensional for all $k$ and empty for large negative $k$. We see that this filtration is compatible with filtrations on $A_c,A_{c'}$ and each $M_{\le k}$ is preserved by $\ad h$. In order to check the adjoint action let us take $x^j R(z)\in M_{\le k}$. Then
\[[v,x^j R(z)]=x^{j+1}\big(R(z)-R(z+1)\big),\]
\[[u,x^j R(z)]=x^{j-1}\big(P(z+j)R(z)-P_1(z)R(z-1)\big),\]
where $P_1$ is a polynomial for algebra $A_{c'}$. Both $R(z)-R(z+1)$ and $P(z+j)R(z)-P_1(z)R(z-1)$ are differences of two polynomials with the same leading coefficient, hence their degrees are at most $\deg R-1$, $\deg R+\deg P-1=\deg R+n-1$ respectively. We see that both $[u,x^j R(z)]$ and $[v,x^j R(z)]$ belong to $M_{\le k+n-2}$.

Let us compute the action of $\gr A_c=\CN[x,y]^{C_n}=\CN[u,v,z]/(uv-z^n)$ on $\gr M$. For $a\in M_{\le k}\setminus M_{\le k-1}$ denote by $\ovl{a}$ the corresponding element of $(\gr M)_k$. Take $\ovl{a}\in \gr M$. We can assume that $a=x^j z^l$ with $z+2l=k$. Then \[v\ovl{a}=\ovl{va}=\ovl{x^{j+1}z^l},\]
\[z\ovl{a}=\ovl{za}=\ovl{zx^jz^l}=\ovl{(x+1)^j z^{l+1}}=\ovl{x^j z^l},\]
\[u\ovl{a}=\ovl{ua}=\ovl{x^{-1}P(z-\tfrac12)x^jz^l}=\ovl{x^{j-1}P(z+j-\tfrac12)z^l}=\ovl{x^{j-1}z^{n+l}}.\]
This means that $\gr M$ is $\CN[x,y]^{C_n}$-submodule of $\CN[x,x^{-1},y]$ with $\ovl{x^j z^l}\in \gr M$ corresponding to $x^{nj+l}y^l\in \CN[x,x^{-1},y]$.

Note that for large enough $j\geq j_0$ we have $R_j(z)=1$ and \[R_{-j-1}(z)=(z-c_1-j)(z-c_2-j)\cdots (z-c_n-j)R_{-j}(z).\]  We see that $\gr M$ is generated by $\ovl{x^j R_j(z)}$ for all $j$ with $\abs{j}\leq j_0$.


\end{enumerate}
\end{proof}
\begin{rem}
In the case $n=2$ algebras $A_c$, $A_{c'}$ are central reductions of $U(\mf{sl}_2)$ and our definition of Harish-Chandra bimodule agrees with the standard one: the adjoint action of $U(\mf{sl}_2)$ is locally finite. In the case $n>2$ we use generators $u,v,z$ to write a definition similar to the one that Losev gives in~\cite{LosevHarishChandra}: $\mc{M}$ is a filtered bimodule over a filtered algebra $\mc{A}$, $d$ is a positive integer such that $[\mc{A}_{\le i},\mc{M}_{\le j}]\subset \mc{M}_{\le i+j-d}$, module $\gr\mc{M}$ is finitely generated over $\gr\mc{A}$. Our definition is equivalent to Losev's if we take $\mc{A}$ to be the algebra with generators $u,v,z$ and relations $[z,v]=v$, $[z,u]=-u$, so that it has both $A_c$ and $A_{c'}$ as its quotients, $\mc{M}=M_{c,c'}$ and $d=2$. Note that there is no relation on $[u,v]$ because $A_c$ and $A_{c'}$ have different expressions of $[u,v]$ as a polynomial in $z$.
\end{rem}
\begin{lem}
\label{LemMoritaEquivalence}
$M_{c,c'}$ and $M_{c',c}$ give a Morita equivalence between $A_c$ and $A_{c'}$.
\end{lem}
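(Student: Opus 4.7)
The plan is to exhibit a Morita context via composition of differential operators and reduce surjectivity to a weight-by-weight root computation. First I would define bimodule maps
\[
\mu\colon M_{c,c'}\otimes_{A_{c'}}M_{c',c}\to A_c,\qquad \nu\colon M_{c',c}\otimes_{A_c}M_{c,c'}\to A_{c'}
\]
by composing differential operators inside $\CN[x,x^{-1},\partial_x]$. Well-definedness on the relative tensor product is automatic since elements of $A_{c'}$ preserve each $x^{c'_i}\CN[x]$, and the image lies in $A_c$ by the defining property of that subalgebra; both maps are bimodule maps by construction. Associativity of composition immediately implies the compatibility relations $\mu(m\otimes n)m'=m\nu(n\otimes m')$ and $n\mu(m\otimes n')=\nu(n\otimes m)n'$, so $(A_c,A_{c'},M_{c,c'},M_{c',c},\mu,\nu)$ forms a Morita context. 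By the standard Morita context criterion, once $\mu$ and $\nu$ are both surjective they are automatically isomorphisms and provide a Morita equivalence, so the problem reduces to proving surjectivity.

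Next I would decompose everything by $\ad z$-weight. By Lemma~\ref{LemHowMCCPrimeLooks} the weight-$j$ pieces of $M_{c,c'}$, $M_{c',c}$ and $A_c$ are $x^jR_j(z)\CN[z]$, $x^jR'_j(z)\CN[z]$ and $x^jP^c_j(z)\CN[z]$, with $R_j$ and $R'_j$ monic with simple roots, $P^c_j(z)=\prod_i\prod_{\ell=0}^{-j-1}(z-c_i-\ell)$ for $j<0$, and $P^c_j=1$ for $j\ge 0$. Using
\[
\bigl(x^aR_a(z)S(z)\bigr)\bigl(x^bR'_b(z)T(z)\bigr)=x^{a+b}R_a(z+b)R'_b(z)S(z+b)T(z),
\]
the image of $\mu$ in weight $j$ is $x^jI_j$, where $I_j\subset\CN[z]$ is the ideal generated by $\{R_a(z+b)R'_b(z):a+b=j\}$. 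Surjectivity in weight $j$ is thus the statement $I_j=P^c_j(z)\CN[z]$.

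The main technical step is the root analysis. Setting $k_i=c_i-c'_i$, the root formulas of Lemma~\ref{LemHowMCCPrimeLooks} translate to: $\alpha=c_i+\ell$ is a root of $R_a(z+b)$ iff $\ell<-j$ and $b\ge-k_i-\ell$, and a root of $R'_b(z)$ iff $\ell\ge 0$ and $b<-k_i-\ell$. For $0\le\ell<-j$ these two conditions on $b$ are exactly complementary in $\ZN$, so $\alpha$ is a root of every product $R_a(z+b)R'_b(z)$ with $a+b=j$, and with multiplicity exactly one since the two ranges are disjoint. For $\ell$ outside this range only one of the two conditions can apply and it fails for extreme values of $b$; numbers not of the form $c_i+\ell$ for any $i$ never appear as roots. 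By genericity of $c$ all the common roots so identified are pairwise distinct, so the gcd of the generating family of $I_j$ equals $P^c_j$ up to a scalar, giving $I_j=P^c_j(z)\CN[z]$. Surjectivity of $\nu$ follows by the same argument with $c$ and $c'$ interchanged.

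The main obstacle is precisely the combinatorial root analysis: one must carefully verify that the two $b$-ranges really do partition $\ZN$ when $0\le\ell<-j$ and genuinely fail to cover $\ZN$ otherwise. Genericity of the parameter is essential to ensure that roots attached to distinct indices $i$ never collide, so that no spurious common factors are forced into the gcd and the count of roots matches $\deg P^c_j$ exactly.
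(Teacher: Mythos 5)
Your proof is correct, but it reaches the key step (surjectivity of the two pairings) by a genuinely different route than the paper. Both arguments set up the same Morita context by composition inside $\CN[x,x^{-1},\partial_x]$ and both invoke the standard fact that surjectivity of the two multiplication maps in a Morita context already forces them to be isomorphisms and yields the equivalence. The paper then proves surjectivity by localization: each of $A_c$, $A_{c'}$, $M_{c,c'}$, $M_{c',c}$ becomes $\CN[x,x^{-1},\partial_x]$ after applying $\CN[x,x^{-1}]\otimes_{\CN[x]}(-)$, and since $\CN[x,x^{-1}]$ is flat over $\CN[x]$ the multiplication map turns into the identity of $\CN[x,x^{-1},\partial_x]$, so the original map is onto (modulo a torsion-freeness/descent remark that the paper only sketches). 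You instead compute the image directly, weight by weight under $\ad z$: the weight-$j$ component of the image is $x^jI_j$ with $I_j$ the ideal generated by the products $R_a(z+b)R'_b(z)$, $a+b=j$, and your root bookkeeping (the two $b$-ranges partitioning $\ZN$ exactly when $0\le\ell<-j$, genericity preventing collisions between indices) shows the gcd is exactly the polynomial $P^c_j$ cutting out the weight-$j$ part of $A_c$; since $\CN[z]$ is a PID this gives $I_j=P^c_j\CN[z]$ and hence surjectivity, with the case of $\nu$ by symmetry. The trade-off: the paper's localization argument is shorter and avoids all combinatorics, but hides the descent from the localized isomorphism behind an unproved torsion claim; your argument is longer and more computational, but it is self-contained, makes explicit exactly where genericity of $c$ is used, and produces the explicit weight-space description of the image, in the same style as the root bookkeeping used later in Lemma~\ref{LemCCprimeRhoIsomorphic}.
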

\begin{proof}
Let $U$ be any of $A_c,A_{c'},M_{c,c'},M_{c',c}$. By definition $U\subset k[x,x^{-1},\partial_x]$. We also see that $U$ is a $\CN[v]=\CN[x]$-module and for any $p\in \CN[x,x^{-1},\partial_x]/U$ there exists $k$ such that $x^kp=0$. It follows that $k[x,x^{-1}]\otimes_{k[x]}U$ is isomorphic to $k[x,x^{-1},\partial_x]$. 

Consider the map $\phi\colon M_{c,c'}\otimes_{A_{c'}} M_{c',c}\to A_c$ that sends $f\otimes g$ to $fg$. It can be proved that $M_{c,c'}\otimes_{A_{c'}}M_{c',c}$ does not have $x$-torsion. Alternatively, we can use the fact that it is enough to prove surjectivity in Morita context. Since $k[x,x^{-1}]$ is a flat $k[x]$-module it is enough to prove that $\psi=\id_{k[x,x^{-1}]}\otimes \phi$ is an isomorphism. After identifying $k[x,x^{-1}]\otimes_{k[x]}M_{c,c'}$ and $k[x,x^{-1}]\otimes_{k[x]}A_c$ with $k[x,x^{-1},\partial_x]$ we get $\psi\colon k[x,x^{-1},\partial_x]\otimes_{A_{c'}}M_{c',c}\to k[x,x^{-1},\partial_x]$ given by $f\otimes g\mapsto fg$.

We have $k[x,x^{-1},\partial_x]=k[x,x^{-1}]\otimes_{k[x]}A_{c'}$. Using this we get \[k[x,x^{-1},\partial_x]\otimes_{A_{c'}}M_{c',c}=k[x,x^{-1}]\otimes_{k[x]}M_{c',c}=k[x,x^{-1},\partial_x]\] and $\psi$ becomes identity. 

Hence $\phi$ is an isomorphism. We similarly prove the similar map from $M_{c',c}\otimes_{A_c} M_{c,c'}$ to $A_{c'}$ is an isomorphism. The lemma follows.
\end{proof}
\begin{cor}
\label{CorHCEquivalence}
The category of Harish-Chandra $A_c$-$A_{c'}$ bimodules is equivalent to the category of Harish-Chandra $A_c$-bimodules.
\end{cor}

We need the following proposition for completeness, but we do not use it anywhere below.
\begin{prop}
\label{PropHCEverything}
The category of Harish-Chandra $A_c$-$A_{c'}$-bimoodules is semisimple. The simple objects in this category can be obtained as $M_{c,c''}$, where $c''$ is a parameter such that the quantization $A_{c''}$ is isomorphic to $A_{c'}$.
\end{prop}
\begin{proof}
Using Corollary~\ref{CorHCEquivalence} it is enough to prove that in the case when $c=c'$. In this case the category of Harish-Chandra bimodules was described by Simental in~\cite{Jose}. We will use the description from Losev's article~\cite{LosevHC}, namely Theorem~1.2. Losev describes the quotient of the category of Harish-Chandra bimodules by the subcategory of bimodules with support of non-maximal dimension. 

In our case this subcategory is trivial. Indeed, support defines a Poisson subscheme of $\CN[u,v]^{\Gamma}$. All such proper subschemes are supported at zero, so that the subcategory consists of finite-dimensional bimodules. The condition on the parameters means that there are no non-zero finite-dimensional submodules. Finite-dimensional representations of more general algebras, $W$-algebras, were classified by Losev~\cite{LosevW}. 

For deformations of Kleinian singularities of type $A$, this can be proved directly as follows. Let $M$ be a finite-dimensional irreducible representation of $A_c$. Let $m\in M$ be an eigenvector of $z$ with eigenvalue $\lambda$. Then $u^k m$ is an eigenvector of $z$ with eigenvalue $\lambda-k$, hence we can find $n\in M$ such that $un=0$ and $zn=\mu n$. Since $vu=P(z-\tfrac12)$, we should have $\mu=c_j$ for some $j$. We can also find $r\in M$ such that $vr=0$ and $zr=\mu+l$ for some nonnegative integer $l$. Using $uv=P(z+\tfrac12)$ we have $\mu+l+1=c_k$ for some $k$. Hence $c_j+l+1=c_k$, this contradicts our choice of parameter $c$.


In~\cite{LosevHC} Losev uses two ways of parametrizing deformations of type $A$ Kleinian singularities.The first one is Crawley--Boevey---Holland construction. The parameter is an element of $Z(\CN[\Gamma])$ of the form $C_{CBH}=1+\sum_{\gamma\neq 1}C_{\gamma}\gamma$. 

CBH parameters $C_{\gamma}$ are expressed in terms of $c$ as follows. For cyclic $\Gamma$ with generator $\gamma$ we denote $C_{\gamma^k}$ by $C_k$. Shifting all $c_i$ such that $\sum c_i=0$ we get \[c_k=\frac{1}n(\sum_{i=1}^{n-1}\frac{C_i \eps^{-ik}}{\eps^i-1}+\frac{1}{2}-k)+\frac{1}{2}.\] For the proof see Lemma~1.2.4 with $q=0$ and $a=n$ in~\cite{KV}, for example.

The second parameter is $\lambda_c\in\mf{h}^*$, it is defined via $\langle \lambda_c,\alpha_k^{\vee}\rangle=tr_{N_k}(C)=C_0+\sum_{j=1}^{n-1}e^{\frac{2\pi \mathrm{i} k j}{n}}C_j$. Here $\alpha_k^{\vee}$ is the $k$-th simple coroot $(0,\ldots,1,-1,\ldots,0)$.

Note that $c_k-c_{k+1}=\frac{1}{n}(\sum_{i=1}^{n-1}\frac{C_i \eps^{-ik}-\eps^{-i(k+1)}}{\eps^i-1})=\frac{1}{n}\sum C_i\eps^{-ik}=tr_{N_{-k}}(C)$.

Considering $c$ as an element of $\mf{h}^*\cong \C^n/\C_{diag}$, we get $\langle c,\alpha_k^{\vee}\rangle=\langle \lambda_c,\alpha_{-k}^{\vee}\rangle$. Hence $c$ and $\lambda_c$ differ by a Dynkin diagram automorphism. Hence the affine Weyl group orbit of $c$ is obtained from affine Weyl group orbit of $\lambda_c$ by a diagram automorphism.





Theorem~1.2 in~\cite{LosevHC} says that the category of Harish-Chandra bimodules over $A_c$ is isomorphic to the category of $\Gamma/\Gamma_0$-representations, where $\Gamma_0$ is the smallest normal subgroup of $\Gamma$ such that there exists $C_0\in \CN[\Gamma_0]$ for which the corresponding parameter $\lambda_0\in \mf{h}^*$ lies in the affine Weyl group orbit of $\lambda$. 

Assume that $A_c$ corresponds to $C_{CBH}\in \C[\Gamma_0]$. This means that $A_c$ is obtained as $\Gamma/\Gamma_0$-invariants of a deformation $\mc{A}$ of $\CN[x,y]^{\Gamma_0}$, see Corollary 2.9 of~\cite{LosevHC}. There is a proof of this statement in Proposition~9.7 of~\cite{IMRNTHesis}. Suppose that $\Gamma_0$ has order $m$. Comparing the relations $uv=P(z-\tfrac12)$ for $A_c$ and for $\mc{A}$ and using $u_c=u_{\mc{A}}^{\frac{n}m}$, $v_c=v_{\mc{A}}^{\frac{n}m}$, $z_c=\frac{m}{n}z_{\mc{A}}$ we get $c_{k+m}=c_k+\frac{m}{n}$ for all $k$. 

We have to describe $\frac{n}{m}$ distinct nontrivial Harish-Chandra bimodules over $A_c$. Let $0\leq i<\frac{n}{m}$. For $1\leq k\leq im$ let $c'_k=c_k+1$. Note that $c'_k=c_{k-im}+\frac{im}{n}$: if $k>im$, then $c'_k=c_k=c_{k-im}+\frac{im}{n}$ and if $k\leq im$ then \[c'_k=c_k+1=c_{k+(\frac{n}{m}-i)m}-(\frac{n}{m}-i)\frac{m}{n}+1.\] Hence $A_{c'}$ is isomorphic to $A_c$. 

We get an $A_c-A_{c'}$ bimodule $M_i=M_{c,c'}$. To show that these bimodules are not isomorphic to each other for different choices of $i$, we compute the adjoint action of $z$ on $M_s$. The isomorphism between $A_c$ and $A_{c'}$ sends $z\in A_c$ to $z+\frac{im}{n}$. Hence the adjoint action of $z$ on $M_{c,c'}$ has weights in $-\frac{im}{n}+\ZN$. These sets are disjoint for different choices of $0\leq i<\frac{n}{m}$.

Since there are no Harish-Chandra $A_c$-bimodules with support of non-maximal dimension and $\gr A_c$ is a domain, the algebra $A_c$ is simple. Then $M_{c,c'}$ provide a Morita equivalence between two simple algebras, hence these modules are also simple. So, we found the required number of pairwise non-isomorphic simple Harish-Chandra $A_c$-bimodules.

It remains to deal with the case when $\lambda$ is obtained from $\lambda_0$ by an action of the affine Weyl group. Since $c$ and $\lambda$ differ by a diagram automorphism, this means that $c$ is obtained from $c^0$ by an action of the affine Weyl group, where $c^0$ satisfies $c^0_{k+m}=c^0_k+\frac{m}{n}$. Changing the order of $c_1,\ldots,c_n$ if necessary, we can assume that $c_i=c_0^i+l_i$ for some integers $l_1,\ldots,l_n$. 

Now, for each $0\leq s<\frac{n}{m}$, take the corresponding bimodule $M_{c^0,c^1}$. Here $c^1$ is obtained from $c^0$ by an integer shift as above, and there exist a permutation $\pi$ such that $c^0_{\pi(j)}-\frac{sm}{n}=c^1_j$. Let $c'_j=c^0_{\pi(j)}-\frac{sm}{n}+l_{\pi(j)}$. This equals to $c_{\pi(j)}-\frac{sm}{n}$, hence $A_{c'}$ is isomorphic to $A$. On the other hand, $c'_j=c_1^j+l_{\pi(j)}$, hence its entries are integer shifts of the corresponding entries of $c^0$ or, equivalently, $c$. We get a Harish-Chandra $A_c$-bimodule $M_{c,c'}$. As above, the isomorphism between $A_c$ and $A_{c'}$ sends $z$ to $z+\frac{im}{n}$, hence the $\ad z$ weight spaces of $M_{c,c'}$ are disjoint for different values of $s$ and we get $\frac{n}{m}$ non-isomorphic Harish-Chandra bimodules. As above, the algebra $A_c$ is simple and $M_{c,c'}$ are simple bimodules.

\end{proof}
\begin{rem}
Our proof shows that the category of Harish-Chandra bimodules depends only on the extended affine Weyl group orbit of a parameter $c$, because being able to shift each $c_i$ by an integer gives a weight lattice action, not a root lattice action. The description in~\cite{LosevHC} depends on the affine Weyl group orbit. The difference is explained as follows. Suppose that there are $k$ simple Harish-Chandra bimodules. Shifting $c$ by a constant and rearranging we can assume that $c_1=\frac{1}{k}$, $c_2=\frac{2}{k}$, $\ldots$, $c_k=1$. Then $(1,0,\ldots,0)+c$ gives $c_1=\frac{k+1}{k}$, $c_2=\frac{2}{k}$, $\ldots$, $c_k=1$, also an arithmetric progression of length $k$ with difference $\frac{1}{k}$. Hence $(1,0,\ldots,0)+c$ is also a parameter with $k$ nontrivial Harish-Chandra bimodules. Similarly, $(1,1,\ldots,1,0,\ldots,0)+c$ is a parameter with $k$ nontrivial Harish-Chandra bimodules. The elements $(1,1,\ldots,1,0,\ldots,0)$ form a complete set of representatives for the root lattice action on weight lattice.
\end{rem}

\subsection{Isomorphism between $M$ and $M_{\rho}$}

Now we assume that $A_{c'}$ is isomorphic to $\ovl{A_c}$ and that both maps $A_c\to \ovl{A_{c'}}$, $A_{c'}\to \ovl{A_c}$ are given by $v\mapsto au$, $u\mapsto bv$, $z\mapsto -z$. Abusing notation we denote both maps by $\rho$. 

 It follows from discussion in Section~2.3 in~\cite{EKRS} that we may take $a,b$ such that $\abs{a}=1$ and $ab=(-1)^n$. Hence $a=\pm i^n e^{-\pi i c}$, $b=\pm i^n e^{\pi i c}$. These isomorphisms are well-defined when $P_{c'}(x)=(-1)^n\ovl{P_c}(-x)$. Both sides have the same leading coefficient, so this is equivalent to having the same set of roots. We get the following condition: for any $i$ from $1$ to $n$ there exists $j$ such that $c_i-\tfrac12=\tfrac12-\ovl{c'_j}$. The latter is equivalent to $c_i+\ovl{c'_j}=1$.
 
Suppose that $j$ corresponds to $i$ and $k$ corresponds to $j$: $c_i+\ovl{c'_j}=1$, $c_j+\ovl{c'_k}=1$. Conjugating the second equation and subtracting we get \[c_i+\ovl{c'_j-c_j}-c'_k=0,\] hence \[c_i-c_k=\ovl{c_j-c'_j}+c'_k-c_k\] is a sum of two integers. This contradicts our assumption that $\{c_1,\ldots,c_n\}$ is a generic parameter. 

Hence numbers from $1$ to $n$ are divided into pairs $(i,j)$ and singletons $i=j$ such that $c_i+\ovl{c'_j}=c_j+\ovl{c'_i}=1$.

Proposition~\ref{PropFormsAndTraces} says that $\rho$-invariant forms on $M_{c,c'}$ are in one-to-one correspondence with $\rho^2$-twisted traces on $M_{c,c'}\otimes_{A_{c'}}M_{c,c',\rho^{-1}}$. Here for an $A_{c}-A_{c'}$-bimodule $M$ by $M_{\rho}$ we mean $\ovl{M}$ with the action $b.m=\rho(b)m$, $m.a=m\rho(a)$ for $a\in A_c$,$ b\in A_{c'}$, $m\in \ovl{M}$. We want $A_c,A_{c'}, M_{c,c'},M_{c',c}$ to form Morita context with conjugation as in example in Section~\ref{SubSecConjugations}. The two remaining pieces are isomorphisms $M_{c,c'}\cong M_{c',c,\rho}$ and $M_{c',c}\cong M_{c,c',\rho}$. We can interchange $c$ and $c'$, so it is enough to find just one of these two isomorphisms.

The bimodule $M_{c,c'}$ is Harish-Chandra. This shows why we want the action of $\rho$ on generators $u,v,z$ to be the same for $A_c$ and $A_{c'}$: if $M_{c',c',\rho}$ is isomorphic to $M_{c,c'}$, it is also Harish-Chandra. Hence, for example, the action $m\mapsto \rho_1(v)m-m\rho_2(v)$ should send $M_{\le k}$ to $M_{\le k+n-2}$. This is possible only when $\rho_1(v)$ and $\rho_2(v)$ are the same multiple of $u$. Similarly, $\rho(z)$, $\rho(u)$ should be the same for $A_c$, $A_{c'}$.
\begin{lem}

\label{LemCCprimeRhoIsomorphic}
The map $\phi$ given by $\phi(x^j R_j(z)R(z))=x^{-j}S_{-j}(z)\ovl{R}(-h)$ is an isomorphism from $M_{c,c'}$ to $M_{c',c,\rho}$. The map $\phi$ also gives an isomorphism from $M_{c,c',\rho^{-1}}$ to $M_{c',c}$. Here for an integer $j$ and all $i$ such that $c'_i-j-1\geq c_i$ the polynomial $S_j$ has roots $c_i,\ldots,c'_i-j-1$. The leading coefficient of $S_j$ is $C_{\phi}(-1)^{\deg R_{-j}}a^{-j}$, where $C_{\phi}$ is a constant corresponding to a choice of $\phi$.
\end{lem}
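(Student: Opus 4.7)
The plan is to verify three things about the antilinear map $\widetilde\phi\colon M_{c,c'}\to M_{c',c}$ given by $x^j R_j(z) R(z)\mapsto x^{-j} S_{-j}(z)\ovl R(-z)$ (where I view $M_{c',c,\rho}=\ovl{M_{c',c}}$ as a set): well-definedness, bijectivity, and compatibility with the $A_c$--$A_{c'}$-bimodule structures. As an antilinear map landing in $M_{c',c}$ the bimodule condition reads $\widetilde\phi(am)=\rho(a)\widetilde\phi(m)$ and $\widetilde\phi(mb)=\widetilde\phi(m)\rho(b)$ for $a\in A_c$, $b\in A_{c'}$.

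Well-definedness and bijectivity are immediate from Lemma~\ref{LemHowMCCPrimeLooks} applied to $M_{c',c}$: the weight-$(-j)$ subspace of $M_{c',c}$ is exactly $x^{-j} S_{-j}(z)\CN[z]$, so $\widetilde\phi$ has the right target, and on each weight component it reduces to the antilinear bijection $R\mapsto\ovl R(-z)$ of $\CN[z]$.

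For the bimodule condition it suffices to check on the generators $z,u,v$. Set $d_i:=c_i-c'_i\in\ZN$, $I_j:=\{i:d_i>j\}$, and use the factorization $R_j=R_{j+1}T_j$ with $T_j(z)=\prod_{i\in I_j}(z-c_i+j+1)$ (and the analogous one for $S$). The case $\rho(z)=-z$ is a direct calculation using $zx^{\pm j}=x^{\pm j}(z\pm j)$ and $\ovl{(z+j)R}(-z)=(j-z)\ovl R(-z)$. For $\rho(v)=au$, after rewriting $vm$ with the factorization above and computing $au\cdot\widetilde\phi(m)=a\,x^{-j-1}\prod_i(z-c'_i-j)\,S_{-j}(z)\ovl R(-z)$, the verification reduces to the polynomial identity
\[
S_{-j-1}(z)\,\ovl T_j(-z) \;=\; a\prod_{i=1}^n(z-c'_i-j)\,S_{-j}(z).
\]
The key observation is that the involution $\sigma$ on $\{1,\dots,n\}$ preserves $I_j$ setwise: from $c_i+\ovl{c'_{\sigma(i)}}=c_{\sigma(i)}+\ovl{c'_i}=1$ one gets $d_{\sigma(i)}=\ovl{d_i}=d_i$. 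Combined with $\ovl{c_i}=1-c'_{\sigma(i)}$, this shows both sides have the same roots; matching leading coefficients then yields the recursion $L_{-j-1}=a(-1)^{|I_j|}L_{-j}$, where $L_k$ denotes the leading coefficient of $S_k$. The computation for $\rho(u)=bv$ produces the same recursion, automatically consistent because $ab=(-1)^n$. Iterating the recursion and using the elementary identity $\deg R_{l+1}-\deg R_l=-|I_l|$ gives the closed form $L_j=C_\phi(-1)^{\deg R_{-j}}a^{-j}$ stated in the lemma, where $C_\phi:=L_0/(-1)^{\deg R_0}$ records the overall scaling ambiguity.

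The right $A_{c'}$-action is verified by a symmetric calculation (swap $c$ and $c'$), and the second asserted isomorphism $M_{c,c',\rho^{-1}}\cong M_{c',c}$ is obtained by interpreting the same formula with opposite twist. The main obstacle is the careful bookkeeping of signs and conjugations in the displayed polynomial identity above, but the whole argument pivots on the single observation that $\sigma$ preserves every index set $I_j$; without this the root sets would not match and the sign recursion would not close.
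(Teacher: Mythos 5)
Your proposal is correct and follows essentially the same route as the paper's proof: define $\phi$ weight-by-weight via the root-prescribed polynomials $S_{-j}$, check the generator $z$ directly, and reduce the $u$- and $v$-compatibilities to matching roots and leading coefficients, where the $\sigma$-invariance of the index sets $I_j$ (the paper's observation that roots come in pairs indexed by $i,\sigma(i)$) and the relation $ab=(-1)^n$ do exactly the work you describe, your recursion $L_{-j-1}=a(-1)^{|I_j|}L_{-j}$ being the paper's stated leading-coefficient formula in derived form, and your direct check of the $u$-action being a harmless variant of the paper's torsion-freeness shortcut ($u\phi(um)=buv\phi(m)$). The one loose phrase is describing the right $A_{c'}$-action as a ``symmetric calculation (swap $c$ and $c'$)'': it is not a literal swap, since $mv=x^{j+1}R_j(z+1)$ introduces argument shifts and the relevant factorization becomes $S_{-j-1}(z)=S_{-j}(z-1)B_{-j-1}(z)$ with $A_j(z)=R_j(z+1)/R_{j+1}(z)$, but the analogous root-and-coefficient computation (which the paper carries out explicitly) imposes only the same scalar recursion already forced by the left action, so your argument closes without modification.
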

\begin{proof}
The second statement follows from the first after twisting the action from both sides by $\rho^{-1}$.

We want to construct a linear isomorphism $\phi\colon M_{c,c'}\to M_{c',c}$ such that $\phi(em)=v.\phi(m)=au\phi(m)$, $\phi(um)=u.\phi(m)=bv\phi(m)$, $\phi(zv)=z.\phi(v)=-z\phi(v)$, similarly for the right multiplication.

We define $\phi(x^j R_j(z)R(z)):=x^{-j}S_{-j}(z)\ovl{R}(-h)$. Here $S_{-j}$ is a polynomial that satisfies a similar condition on roots as $R_j$ but is not necessarily monic. Namely, $S_j$ has roots $c_i,\ldots,c'_i-j-1$ for all $i$ such that $c'_i-j-1\geq c_i$.

We see that $\phi$ is linear and satisfies $\phi([z,m])=[\phi(m),z]$ and $\phi(mz)=-\phi(m)z$ for all $m\in M_{c,c',\rho}$. Hence $\phi(zm)=-z\phi(m)$ for all $m\in M_{c,c',\rho}$.

It is enough to check all other conditions for $m=x^j R_j(z)$.

We have 
\begin{equation}
\label{EqRho1}
\phi(vm)=\phi(x^{j+1}R_j(z))=\phi(x^{j+1}R_{j+1}(z)L_j(z))=x^{-j-1}S_{-j-1}(z)\ovl{L_j}(-h).
\end{equation} Here $L_j(t)=\frac{R_j(t)}{R_{j+1}(t)}$ is a monic polynomial with roots $c_i-j-1$ for all $i$ such that $c_i-j-1\geq c'_i$. 

Let $\sigma$ denote the permutation such that $\ovl{c_i}+c'_{\sigma(i)}=1$, $\sigma^2=1$. Suppose that $c_i-j-1\geq c'_i$. We get $\ovl{c_i}-j-1\geq \ovl{c'_i}$, hence $1-c'_{\sigma(i)}-j-1\geq 1-c_{\sigma(i)}$. It follows that $c_{\sigma(i)}-j-1\geq c'_{\sigma(i)}$. We see that roots of $L$ come in pairs $c_i-j-1$, $c_{\sigma(i)}-j-1$.

The polynomial $\ovl{L_j}(-t)$ has roots $-(\ovl{c_{i-j-1}})=j+1-\ovl{c_i}=j+c'_{\sigma(i)}$ for all $i$ such that $c_i-j-1\geq c'_i$. Taking $i$ instead of $\sigma(i)$, the roots become $j+c'_i$.

We have $S_{-j-1}(t)=S_{-j}(t)M_{-j-1}(t)$, where $M_{-j-1}$ has roots $j+c'_i$ for all $i$ such that $c'_i+j\geq c_i$. Similarly to the above, roots of $M_{-j-1}$ are in pairs $j+c'_i$, $j+c'_{\sigma(i)}$.

For every $i$ either $c_i-j-1\geq c'_i$ or $c'_i+j\geq c_i$ is true but not both. We deduce that the union of roots of $M_{-j-1}$ and $\ovl{L_j}(-t)$ is disjoint and equal to $\{c'_1+j,\ldots,c'_n+j\}$.

Using~\eqref{EqRho1} we get
\begin{equation}
\label{EqRho2}
\phi(vm)=x^{-j-1}S_{-j-1}(z)\ovl{L_j}(-z)=x^{-j-1}S_{-j}(z)M_{-j-1}(z)\ovl{L_j}(-z).
\end{equation}

From $M_{-j-1}=\frac{S_{-j-1}}{S_{-j}}$ we deduce that $M_{-j-1}$ has leading coefficient \[\frac{(-1)^{\deg R_{j+1}}a^{j+1}}{(-1)^{\deg R_j}a^j}=(-1)^{\deg R_{j+1}-\deg R_j}a=(-1)^{\deg L_j}a.\] Hence $M_{-j-1}(z)\ovl{L_j}(-z)$ has leading coefficient $a$ and $M_{-j-1}(z)\ovl{L_j}(-z)=a(z-c'_1-j)\cdots (z-c'_n-j)=P_1(z-\tfrac 12-j)$. We deduce from~\eqref{EqRho2} that $\phi(vm)=ax^{-j-1}S_{-j}(z)P_1(z-\tfrac12-j)$.

We have \[au\phi(m)=a x^{-1}P_1(z-\tfrac12)x^{-j}S_{-j}(z)=ax^{-j-1}P_1(z-\tfrac12-j)S_{-j}(z)=\phi(vm).\]

It follows from the description of $M_{c,c'}$ that it is a torsion-free $A_c$ and $A_{c'}$ module. Hence in order to check that $\phi(um)=bv\phi(m)$, we can check that $u\phi(um)=buv\phi(m)$. This follows from what we already proved:
\begin{multline*}
buv\phi(m)=bP_1(z+\tfrac12)\phi(m)=b\phi(\ovl{P_1}(\tfrac12-z)m)=\\
(-1)^nb\phi(P(z-\tfrac12)m)=(-1)^nb\phi(vum)=(-1)^nabu\phi(um)=u\phi(um).
\end{multline*}
We used that $\ovl{P_1}(-t)=(-1)^nP(t)$, $ab=(-1)^n$ and $\phi(vum)=au\phi(um)$.

Similarly the condition $\phi(mu)=\phi(m)bv$ will follow from the condition $\phi(mv)=\phi(m)au$ for all $m$. We will prove it now. We have \begin{multline}
\label{EqRho3}
\phi(mv)=\phi(x^j R_j(z)x)=\phi(x^{j+1}R_j(z+1))=\\
\phi(x^{j+1}R_{j+1}(z)A_j(z))=x^{-j-1}S_{-j-1}(z)\ovl{A_j}(-z).
\end{multline}
Here $A_j$ has roots $c'_i-1$ for all $i$ such that $c_i-j-1\geq c'_i$ and $\ovl{A_j}(-z)$ has roots $-(\ovl{c'_i}-1)=c_{\sigma(i)}$ for the same $i$. Reasoning as with $L$ we see that the roots of $A_j$ come in pairs $c_i,c_{\sigma(i)}$.

We have $S_{-j-1}(t)=S_{-j}(t-1)B_{-j-1}(t)$. The polynomial $B_{-j-1}$ has roots $c_i$ for all $i$ such that $c'_i+j\geq c_i$. Reasoning as above, we deduce that the union of the roots of $A_j(t)$ and $B_{-j-1}(t)$ is disjoint and equals to $\{c_1,\ldots,c_n\}$.

Similarly to the above, we deduce that $B_{-j-1}(z)\ovl{A_j}(-h)=aP(z-\tfrac12)$. The leading signs coincide because the leading sign of $B_{-j-1}$ equals to the leading sign of $M_{-j-1}$ and the degree of a monic polynomial $A_j$ equals to the degree of a monic polynomial $L_j$.


Combining this and~\eqref{EqRho3} we get $\phi(mv)=ax^{-j-1}S_{-j}(z-1)P(z-\tfrac12)$.

We have \[\phi(m)au=ax^{-j}S_{-j}(z)x^{-1}P(z-\tfrac12)=ax^{-j-1}S_{-j}(z-1)P(z-\tfrac12).\]
 The lemma follows.
\end{proof}

\subsection{The positive forms}
Proposition~\ref{PropFormsAndTraces} says that any $\rho$-invariant sesquilinear form $(\cdot,\cdot)$ on $M$ is given by $(m,n)=T(m\phi(n))$ , where $T$ is a $g_t=\rho^2$-twisted trace. Here $t=ba^{-1}$. For fixed $t$ there are two conjugations $\rho$ with $\rho^2=g_t$. One of them is $\rho_+$, the other is $\rho_-$, later we will specify which is which. The answer for $\rho_+$ and $\rho_-$ is sometimes different, as it was in~\cite{EKRS}.

We want to describe all traces in convenient form. Recall that $\abs{t}=1$. Let $t=e^{2\pi i c}$, where $c\in[0,1)$. We will need the following definition:
\begin{defn}
We say that a non-self-intersecting curve $C$ on a complex plane is an good contour if the following holds:
\begin{enumerate}
\item
There exists $r>0$ such that $C\setminus B_r(0)$ coincides with $(a+i\RN)\setminus B_r(0)$ for some $a\in\RN$. This allows us to define the notions ''to the left of $C$'' and ''to the right of $C$''.
\item
For every $i=1,\ldots,n$ the set $c_i-\ZN_{> 0}$ is to the left of $C$ and $c_i+\ZN_{\geq 0}$ is to the right of $C$. 
\end{enumerate}
\end{defn}
We note that for generic $c=\{c_1,\ldots,c_n\}$ there exist good contours.

Let $\bP(x)=\prod_{i=1}^n (x-e^{2\pi i c_i})$.

Recall that $A_c$ is graded by the action of $\ad h$ and the zeroth component is $\CN[h]$. We have the following proposition, similar to Proposition 3.1 from~\cite{EKRS}.
\begin{prop}
Let $C$ be a good contour. Then any $g_t$-twisted trace $T$ on $A_c$ is zero on $\ad z$ eigenspaces of nonzero weight and given on $\CN[z]$ by \[T(R(z))=\int_C R(x) w(x) dx,\qquad R\in \CN[z],\]
where $w$ is a weight function defined by the formula $w(t)=e^{2\pi i c x}\frac{G(e^{2\pi i x})}{\bP(e^{2\pi i x})}$ and $G$ is a polynomial of degree at most $n-1$ such that $G(0)=0$ if $c=0$.

\end{prop}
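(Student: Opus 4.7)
The strategy mirrors the proof of Proposition~3.1 in~\cite{EKRS}. Since $\rho$ is antilinear and $\rho(z)=-z$, the automorphism $g_t=\rho^2$ fixes $z$; the twisted-trace identity then gives $j\,T(m)=T([z,m])=T(zm)-T(m\,g_t(z))=T(zm)-T(mz)=0$ whenever $m$ is an $\ad z$-eigenvector of weight $j\ne 0$, so $T$ vanishes on nonzero weight spaces. On the zero-weight piece $\CN[z]$, the relations $uR(z)=R(z+1)u$, $uv=P(z+\tfrac12)$, $vu=P(z-\tfrac12)$, together with $g_t(u)=t^{-1}u$, allow me to evaluate $T(uR(z)v)$ in two ways and deduce the recursion
\[T\bigl(R(z+1)P(z+\tfrac12)\bigr)=t^{-1}T\bigl(R(z)P(z-\tfrac12)\bigr)\qquad\text{for all } R\in\CN[z].\]
Thus every $g_t$-twisted trace on $A_c$ is determined by its restriction to $\CN[z]$ subject only to this single recursion.

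I next verify that $T_G(R):=\int_C R(x)w(x)\,dx$ defines such a trace. Convergence along a vertical-at-infinity good contour follows from exponential decay of $w$ as $\abs{\Im x}\to\infty$ when $c\in(0,1)$, and the constraint $G(0)=0$ in the case $c=0$ provides exactly the decay needed at $\Im x\to +\infty$. The functional equation $w(x+1)=t\,w(x)$ is immediate from $e^{2\pi i c(x+1)}=t\,e^{2\pi i c x}$. Substituting $y=x+1$ in the left-hand side of the recursion gives $t^{-1}\int_{C+1}R(y)P(y-\tfrac12)w(y)\,dy$, and the good-contour hypothesis places exactly the points $c_1,\ldots,c_n$ inside the strip between $C$ and $C+1$; the factor $P(y-\tfrac12)=\prod_j(y-c_j)$ cancels the simple poles of $w$ at these points, so the contour may be shifted back without residue contributions and the recursion holds.

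For the upper bound I consider the subspace $V\subset\CN[z]$ spanned by the polynomials $R(z+1)P(z+\tfrac12)-t^{-1}R(z)P(z-\tfrac12)$. The leading term is $(1-t^{-1})\mathrm{lead}(R)\,z^{\deg R+n}$ when $t\ne 1$, and when $t=1$ this cancels and the leading term drops to $(\deg R+n)\mathrm{lead}(R)\,z^{\deg R+n-1}$. Inductively, $\CN[z]/V$ is spanned by $1,z,\ldots,z^{n-1}$ (or by $1,z,\ldots,z^{n-2}$ when $t=1$), so the dimension of the space of twisted traces is at most $n$, or $n-1$ in the case $c=0$.

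To match this bound I show $G\mapsto T_G$ is injective. The same contour-shift applied directly to $T_G$ yields via the residue theorem
\[T_G(R)-t\,T_G\bigl(R(z+1)\bigr)=\sum_{j=1}^n R(c_j)\cdot\frac{e^{2\pi i(c-1)c_j}\,G(e^{2\pi i c_j})}{\prod_{\ell\ne j}\bigl(e^{2\pi i c_j}-e^{2\pi i c_\ell}\bigr)}.\]
If $T_G\equiv 0$ then the right-hand side vanishes for every $R$; taking $R(z)=\prod_{k\ne j}(z-c_k)$ isolates the $j$-th summand and forces $G(e^{2\pi i c_j})=0$ for each $j$. Since $\deg G\le n-1$, this kills $G$ (the case $c=0$ adds $G(0)=0$ as a redundant further vanishing). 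The main technical point throughout is the contour-shift step: it is essential that the good-contour conditions are tailored so that the strip between $C$ and $C+1$ contains precisely the poles $c_1,\ldots,c_n$, which are exactly the zeros of $P(\cdot-\tfrac12)$.
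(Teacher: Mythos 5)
Your proof is correct, and its core coincides with the paper's: the verification that $T_G(R)=\int_C R\,w$ satisfies the twisted-trace recursion by shifting $C$ to $C+1$ and observing that $P(y-\tfrac12)=\prod_j(y-c_j)$ kills the simple poles of $w$ in the strip is exactly the argument given there. Where you diverge is in how the remaining inputs are handled: the paper simply cites Proposition~2.3 and Corollary~2.4 of \cite{EKRS} for the facts that a twisted trace is supported on $\CN[z]$ and satisfies the single recursion, and for the dimension ($n$ if $t\ne1$, $n-1$ if $t=1$) of the trace space, and then asserts that the family $T_G$ has that dimension; you instead prove all of this in-house --- the vanishing on nonzero $\ad z$-weights from $g_t(z)=z$, the recursion from evaluating $T(uR(z)v)$ two ways, the upper bound $\dim\CN[z]/V\le n$ (resp.\ $n-1$) by the leading-term analysis of $R(z+1)P(z+\tfrac12)-t^{-1}R(z)P(z-\tfrac12)$, and, usefully, the injectivity of $G\mapsto T_G$ via the residue identity, a point the paper's proof leaves implicit when it claims the constructed subspace of traces has dimension $n$. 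Your version is therefore more self-contained and makes the dimension count honest; the paper's is shorter by outsourcing to \cite{EKRS}. Two harmless quibbles: your residue formula should carry an overall sign depending on the orientation of $\partial U$ (irrelevant for the injectivity conclusion), and the assertion that the recursion is the \emph{only} constraint (i.e.\ that every solution extends to a twisted trace) is stated but not proved --- it is also not needed for the proposition as formulated, since your dimension count only requires that trace restrictions solve the recursion and that the $T_G$ exhaust the solution space.
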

\begin{proof}
We note that $w(x+1)=tw(x)$. Conditions on $G$ imply that $w$ decays exponentially when $|\Im z|$ goes to infinity. Since $C$ is good, the integral $\int_C R(z) w(z) dz$ is defined for all $R\in \CN[z]$.

Proposition 2.3 from~\cite{EKRS} says that $T$ is a trace if and only if $T$ is supported on $\CN[h]$ and $T\big(S(z-\tfrac12)P(z-\tfrac12)\big)=tT(S(z+\tfrac12)P(z+\tfrac12)$. Similarly to the proof of Proposition 3.1 from~\cite{EKRS} we have \[T\big(S(z-\tfrac12)P(z-\tfrac12)-tS(z+\tfrac12)P(z+\tfrac12)\big)=-\int_{\partial U}S(x-\tfrac12)P(x-\tfrac12)w(x)dx,\] where $U$ is the region between $C$ and $C+1$, so that the boundary of $U$ is $C+1$ in positive direction and $C$ in negative. It is enough to prove that $P(x-\tfrac12)w(x)$ has no poles between $C$ and $C+1$. By definition of $C$ the poles of $w$ between $C$ and $C+1$ are contained in $\{c_1,\ldots,c_n\}$. The roots of $P$ are $c_1-\tfrac 12,\ldots,c_n-\tfrac 12$, so the roots of $P(z-\tfrac12)$ are $c_1,\ldots,c_n$. It follows that $P(x-\tfrac12)w(x)$ has no poles between $C$ and $C+1$.

We obtained the subspace of $g_t$-twisted traces of dimension $n$ if $t\neq 1$ and $n-1$ if $t=1$. This is exactly the dimension of the space of traces from Corollary 2.4 in~\cite{EKRS}.
\end{proof}

From now on, we will not need $t$ in our computations, only $c$. Since we use $x$ to express elements of $M_{c,c'}$, we will use $t$ as an integration variable.

Now we start computing the cone of positive definite Hermitian forms for fixed $\rho$. Note that different $\ad z$ eigenspaces are orthogonal with respect to $(\cdot,\cdot)$, hence it is enough to check the condition $(m,m)>0$ for $m$ in some eigenspace of $\ad z$. Suppose that $(\cdot,\cdot)$ is positive definite. When $m=x^j R_j(z)R(z)$, we have 
\begin{multline*}
(m,m)=T(m\phi(m))=T(x^j R_j(z) R(z) x^{-j} S_{-j}(z) \ovl{R}(-z))=\\
T(R_j(z-j)R(z-j)S_{-j}(z)\ovl{R}(-z))=T(R_j(z-j)S_{-j}(z)R(z-j)\ovl{R}(-z))=\\
\int_C R(t-j)\ovl{R}(-t) R_j(t-j)S_{-j}(t)w(t)dt.
\end{multline*}

We use the same strategy as in~\cite{EKRS}: we try to shift the contour $C$ to $i\RN+\tfrac{j}2$. If there are poles between $C$ and $i\RN+\tfrac{j}2$, we prove that this integral is negative for some $R$, a contradiction. If there are no poles, we use that polynomials are dense in $L^2(\RN,\omega)$ for an exponentially decaying weight $\omega$ to get that $R_j(z-j)S_{-j}(z)w(z)$ should be positive on $i\RN+\tfrac{j}2$. This gives a condition on $G$ similar to the one in~\cite{EKRS}.

We say that index $i$ is bad if there exists $j$ such that the function \[f(x)=\frac{R_j(x-j)S_{-j}(x)}{(e^{2\pi i x}-e^{2\pi i c_i})}\] has poles in the closed region between $C$ and $\frac{j}{2}+i\RN$. If this holds, we say that $j$ is bad for $i$, in the other case we say that $j$ is good for $i$.

\begin{lem}
\label{LemDescriptionOfBadI}
An index $i$ is bad if and only if $\Re c_i+\Re c'_i\le 0$ or \linebreak $\Re c_i+\Re c'_i\geqslant 2$.
\end{lem}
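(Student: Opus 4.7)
The plan is to reduce the condition on $i$ to pure bookkeeping about which integer offsets $m$ give poles of $f$ at $c_i+m$ that are not cancelled by zeros of $R_j(x-j)S_{-j}(x)$, and which of these uncancelled offsets land in the closed strip between $C$ and $\tfrac{j}{2}+i\RN$. Genericity of the parameter $c$ guarantees that only zeros coming from the same index $i$ can cancel a pole at $c_i+m$. Set $K:=c_i-c'_i\in\ZN$ and $s:=\Re c_i+\Re c'_i$ (which is real since $\Im c_i=\Im c'_i$). By Lemma~\ref{LemHowMCCPrimeLooks} the index-$i$ contribution to the zeros of $R_j(x-j)$ is $\{c'_i+j,\ldots,c_i-1\}$, present only when $j<K$, while by Lemma~\ref{LemCCprimeRhoIsomorphic} the index-$i$ contribution to $S_{-j}(x)$ is $\{c_i,\ldots,c'_i+j-1\}$, present only when $j>K$. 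Writing a pole as $c_i+m$ with $m\in\ZN$, the set of uncancelled offsets is $\ZN_{\ge 0}\cup\ZN_{\le j-K-1}$ for $j<K$, all of $\ZN$ for $j=K$, and $\ZN_{\ge j-K}\cup\ZN_{\le -1}$ for $j>K$.

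By the definition of a good contour, $c_i+m$ is to the right of $C$ iff $m\ge 0$ and to the left iff $m\le -1$, so $c_i+m$ lies in the closed region between $C$ and $\tfrac{j}{2}+i\RN$ iff either $0\le m\le \tfrac{j}{2}-\Re c_i$ or $\tfrac{j}{2}-\Re c_i\le m\le -1$.

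For the forward direction I would take $j=K$, so there is no cancellation and $\tfrac{j}{2}-\Re c_i=-s/2$. If $s\le 0$ then $m=0$ lies in the region, and if $s\ge 2$ then $m=-1$ does; in either case $i$ is bad. For the converse, assume $0<s<2$ and write $j=K+d$, so $\tfrac{j}{2}-\Re c_i=(d-s)/2$. For $d=0$ this value lies in $(-1,0)$, so no integer $m$ satisfies the region condition. For $d\ge 1$ the uncancelled offsets are $\ZN_{\ge d}\cup\ZN_{\le -1}$; an uncancelled $m\ge d$ in the region would force $d\le(d-s)/2$, i.e.\ $s\le -d<0$, while an uncancelled $m\le -1$ would require $m\ge(d-s)/2>-1/2$, both impossible. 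For $d\le -1$ the uncancelled offsets are $\ZN_{\ge 0}\cup\ZN_{\le d-1}$; the largest uncancelled $m\le -1$ is $m=d-1$, and $d-1\ge(d-s)/2$ would force $s\ge 2-d\ge 3$, contradicting $s<2$. This exhausts all $j\in\ZN$ and finishes the lemma.

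No step is conceptually subtle; the only real obstacle is correctly aligning the two cancellation intervals, which abut on opposite sides of $0$ depending on the sign of $K-j$, and treating the region as \emph{closed} so that boundary offsets such as $m=0$ at $s=0$ or $m=-1$ at $s=2$ are included.
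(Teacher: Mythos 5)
Your argument is correct and is essentially the paper's own proof in a different packaging: the paper runs the same root/cancellation bookkeeping (roots $c'_i+j,\ldots,c_i-1$ of $R_j(x-j)$ for $j<K$, roots $c_i,\ldots,c'_i+j-1$ of $S_{-j}$ for $j>K$) but phrases badness as solvability of a pair of inequalities in $j$ and the pole offset $k$, whereas you sweep over $j=K+d$; the two reduce to the same arithmetic and give the same thresholds $\Re c_i+\Re c'_i\le 0$ and $\ge 2$. One tiny omission to patch: in the case $d\le -1$ you only exclude uncancelled offsets $m\le d-1$, and you should also note that uncancelled offsets $m\ge 0$ cannot lie in the region, since that would force $0\le m\le (d-s)/2<0$; this is a one-line fix and does not affect the argument.
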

\begin{proof}
Recall that the intersection of roots of $R_j(z)$ with $c_i+\ZN$ is \linebreak $\{c'_i,c'_i+1,\ldots,c_i-j-1\}$ in the case when $c_i-c'_i-j>0$ and empty otherwise. For $R_j(z-j)$ this becomes $c'_i+j,\ldots,c_i-1$ or empty. For $S_j$ this intersection equals to $\{c_i,c_i+1,\ldots,c'_i+j-1\}$ in the case when $c'_i-c_i+j>0$ and empty otherwise.

Suppose that $i$ is bad, take $j$ that is bad for this $i$. Denote by $L$ the line $\frac{j}{2}+i\RN$. Assume that $f$ has a pole to the left of $C$ and to the right of $L$. Recall that the set $c_i-\ZN_{>0}$ is to the left of $C$ and $c_i+\ZN_{\geqslant 0}$ is to the right of $C$. It follows that this pole of $f$ is $c_i-k$ for some $k>0$. This pole is to the right of $L$, hence $\Re c_i-k\geqslant \frac{j}{2}$. All roots of the denominator $e^{2\pi i z}-e^{2\pi i c_i}$ of $f$ are simple, hence $c_i-k$ cannot be a root of $R_j(z-j)S_{-j}(z)$.

If $c'_i+j>c_i-k$ then $c_i-k$ cannot be a root of $R_j(z-j)S_{-j}(z)$. If $c'_i+j\le c_i-k$ then $c_i-c'_i-j\geqslant k>0$, so that $R_j(z-j)$ has roots $c'_i+j,\ldots,c_i-1$, hence it has root $c_i-k$.

So the condition we get is $c'_i+j\ge c_i-k+1$. Hence there are two conditions on $j$: \[\frac{j}{2}\le \Re c_i-k,\]
\[j\ge c_i-c'_i-k+1.\] They can be satisfied by an integer $j$ if an only if \[c_i-c'_i-k+1\le \lfloor 2\Re c_i\rfloor-2k.\]

There exists such integer $k>0$ if and only if $k=1$ works:
\[c_i-c'_i\le \lfloor 2\Re c_i\rfloor -2.\]

The left-hand side is an integer, so we don't need to take floor function on the right. We also have $c_i-c'_i=\Re c_i-\Re c'_i$. In the end we get $\Re c_i+\Re c'_i\ge 2$.

If there exists a pole of $f$ to the left of $L$ and to the right of $C$, the reasoning is similar: the poles is $c_i+k$ for some $k\geq 0$ and $\Re c_i+k\leqslant \frac{j}{2}$. It works if $c_i+k$ is not a root of $S_{-j}(z)$, this is equivalent to $c'_i+j-1<c_i+k$. So the two conditions are 
\[j\ge 2\Re c_i+2k,\]
\[j\le c_i-c'_i+k.\]

Similarly to the above they can be satisfied when \[  2\Re c_i+2k\le c_i-c'_i+k.\] It is enough to take $k=0$ here. In the end we get $\Re c_i+\Re c'_i\le 0$.
\end{proof}

We claim that the cone of positive traces on $M_{c,c'}$ is isomorphic to the cone from~\cite{EKRS}, where instead of bad and good roots we count bad and good indices $i$.

\begin{prop}
\label{PropBadPolesNoForm}
Suppose that there exists a bad $i$ such that $w$ has a pole in $c_i$. Then $w$ does not give a positive definite form on $M_{c,c'}$.
\end{prop}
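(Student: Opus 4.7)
The plan is to exhibit a weight-$j$ element $m=x^jR_j(z)R(z)$ of $M_{c,c'}$ with $(m,m)<0$, contradicting positive definiteness. By Lemma~\ref{LemDescriptionOfBadI} I may assume $\Re c_i + \Re c'_i \geq 2$ (the case $\leq 0$ is symmetric). From that lemma's proof with $k=1$, there is an integer $j$ in the range $c_i - c'_i \leq j \leq 2\Re c_i - 2$ bad for $i$, the uncancelled pole of the integrand being at $x_0:=c_i-1$, which sits strictly between the good contour $C$ and the vertical line $L:=\tfrac{j}{2}+i\RN$. Because $w$ has a pole at $c_i$ by hypothesis, $\Res_{x_0}w\neq 0$, and $R_j(x_0-j)S_{-j}(x_0)\neq 0$ by construction.

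Setting $F(x)=R(x-j)\ovl R(-x)R_j(x-j)S_{-j}(x)w(x)$, I would shift $\int_C F\,dx$ to $\int_L F\,dx$ and collect residues at the finitely many uncancelled poles of $F$ in the strip. Call those other than $x_0$ parasitic. Constraining $R$ to vanish at every $x_*-j$ and every $-\ovl{x_*}$ for parasitic poles $x_*$ kills all parasitic residue contributions and reduces the formula to
\[
(m,m) = \int_L F\,dx + 2\pi i\,c_0\,R(x_0-j)\,\ovl R(-x_0),
\]
with $c_0:=R_j(x_0-j)S_{-j}(x_0)\Res_{x_0}w$ a fixed nonzero complex scalar. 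Parametrizing $L$ by $x=\tfrac{j}{2}+iy$ gives $R(x-j)\ovl R(-x)=|R(-\tfrac{j}{2}+iy)|^2$, so $\int_L F\,dx$ becomes a weighted $L^2$-integral of $R$ with an exponentially decaying weight.

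The key step is a Runge/Weierstrass-type approximation: polynomials satisfying the finitely many interpolation constraints---vanishing at parasitic points, with prescribed values at $x_0-j$ and at $-\ovl{x_0}$---are dense in the weighted $L^2$-space on $L$, so I can pick $R$ making $\int_L F\,dx$ arbitrarily small while independently fixing $R(x_0-j)$ and $\ovl R(-x_0)$ at any prescribed complex values. To keep the two interpolation points distinct I take $j$ strictly below $2\Re c_i-2$ so that $x_0-j\neq -\ovl{x_0}$; if the allowed range for $j$ collapses to the singleton $\{2\Re c_i-2\}$ (possible only when $2\Re c_i\in\ZN$), I would invoke Lemma~\ref{LemDescriptionOfBadI} with $k\geq 2$ to re-separate the two points. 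With $\int_L F\,dx$ negligible, $(m,m)$ is dominated by $2\pi ic_0 R(x_0-j)\ovl R(-x_0)$; since $(m,m)\in\RN$ by Hermitianness and I can tune the phases of $R(x_0-j)$ and $\ovl R(-x_0)$ freely, this residue contribution can be made any real number, in particular negative.

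The main obstacle is the approximation step: polynomials subject to finitely many point-value constraints must be shown dense in the weighted $L^2$-space on $L$, and in particular capable of being made arbitrarily small in norm while preserving the prescribed point values. This is a density statement for an exponentially-weighted $L^2$-space, analogous to the arguments in~\cite{EKRS}. Additional care is needed in the borderline case where the two interpolation points coincide and the residue reduces to $2\pi ic_0|R(x_0-j)|^2$; there phases cannot be tuned independently, and one must rely on $\Im(c_0)$ having the right sign (or pass to larger $k$).
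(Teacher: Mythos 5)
Your overall strategy (shift the contour, kill the parasitic residues by forcing $R$ to vanish there, make the line integral negligible by weighted-$L^2$ density while prescribing the values of $R$ at the two points entering the surviving residue, then tune phases) is essentially the strategy of the paper's proof, and the density step you flag as the main obstacle is indeed available: it is Lemma~\ref{LemClosureOfPolynomials}(1), used in exactly this way. The genuine gap is the case you relegate to the end: when the uncancelled pole $x_0$ lies \emph{on} the line $i\RN+\tfrac{j}{2}$. This is not an avoidable degeneracy. If $\Re c_i+\Re c_i'=2$ (which is included in ``bad'' by Lemma~\ref{LemDescriptionOfBadI}), then the constraints $j\ge c_i-c_i'-k+1$ and $\tfrac{j}{2}\le \Re c_i-k$ force $k=1$ and $j=2\Re c_i-2$, so the pole $c_i-1$ sits exactly on the line for \emph{every} admissible choice; neither ``take $j$ strictly below $2\Re c_i-2$'' nor ``pass to $k\ge 2$'' exists, the contour shift itself becomes a principal value, the two interpolation points $x_0-j$ and $-\ovl{x_0}$ coincide, and the sign of $\Im(c_0)$ is not under your control. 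The paper disposes of this case by a separate preliminary argument that your proposal lacks: multiply the test vector by $S(z+j)$ for a polynomial $S$ cancelling all poles in the closed strip, shift the contour legitimately, and apply Lemma~\ref{LemClosureOfPolynomials}(2) to conclude that positivity forces $R_j(t-j)S_{-j}(t)w(t)\ge 0$ on the line; a nonnegative function cannot have a simple pole, so there are no poles on the line at all, and only afterwards does one know the remaining poles are strictly interior and can run the residue-plus-interpolation argument.

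A secondary defect in your bookkeeping: you impose $R(-\ovl{x_*})=0$ for every parasitic pole $x_*$ while prescribing a nonzero value at $x_0-j$, but these requirements can clash. For instance $x_*=c'_{\sigma(i)}+j$ is a possible pole location in the strip, and then $-\ovl{x_*}=c_i-1-j=x_0-j$, so the interpolation problem as stated is inconsistent. The paper avoids this by the change of variable $t\mapsto it+\tfrac{j}{2}$, after which the residue functional takes the form $\Phi(Q\ovl{Q})=\sum a_i\,Q(t_i)\ovl{Q(\ovl{t_i})}$ with distinct non-real $t_i$, and one kills the unwanted terms with a single real polynomial vanishing at all $t_i$ (so automatically at their conjugates) rather than with two separate families of vanishing conditions on $R$. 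Your argument can likely be repaired along these lines, but as written both the boundary case and the interpolation clash are real gaps.
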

\begin{proof}
The proof of this proposition is very similar to subsections 4.3-4.4 of~\cite{EKRS} with simplifications because $w$ has only simple poles.

Assume that $w$ gives a positive definite form.

We fix bad $i$ and $j$ that is bad for this $i$. We have \begin{multline*}(x^j R_j(h)R(h),x^j R_j(h)R(h))=\\
T(R_j(z-j)S_{-j}(z)R(z-j)\ovl{R}(-z))=\\
\intl_C R_j(s-j)S_{-j}(s)R(s-j)\ovl{R}(-s)w(s)ds
\end{multline*}

Let $S$ be a polynomial such that $S(t)w(t)$ has no poles between $C$ and $i\RN+\frac{j}{2}$. Then for any $R\in \CN[x]$ we have 
\begin{multline*}
(x^j R_j(z)R(z)S(z+j),x^j R_j(z)R(z)S(z+j))=\\
\intl_C R_j(t-j)S_{-j}(t)R(t-j)S(t)\ovl{R}(-t)\ovl{S}(j-t)dt=\\
\intl_{i\RN+\frac{j}{2}} R_j(t-j)S_{-j}(t)R(t-j)S(t)\ovl{R}(-t)\ovl{S}(j-t)dt
\end{multline*}
We will use Lemma 4.2 from~\cite{EKRS}:
\begin{lem}
\label{LemClosureOfPolynomials}
\begin{enumerate}
Suppose that $w(x)\ge 0$ is a measurable function on the real line such that $w(x)<c e^{-b|x|}$ for some $c,b>0$, $1\le p<\infty$.
\item
Suppose that $H$ is a continuous complex-valued function on $\RN$ with finitely many zeroes and at most polynomial growth at infinity. Then the set $\{H(x)S(x)\mid S(x)\in\CN[x]\}$ is dense in the space $L^p(\RN,w)$.
\item
Suppose that $M(x)$ is a nonzero polynomial nonnegative on the real line. Then the closure of the set $\{M(x)S(x)\ovl{S}(x)\mid S(x)\in \CN[x]\}$ in $L^p(\RN,w)$ is the subset of almost everywhere nonnegative functions.
\end{enumerate}
\end{lem}

We have \[\intl_{i\RN+\frac{j}{2}} R_j(t-j)S_{-j}(t)R(t-j)S(t)\ovl{R}(-t)\ovl{S}(j-t)w(t)dt>0\] for all $R\in \CN[x]$. Multiplying $w$ by $\pm i$, we can change $dt$ to $\abs{dt}$, a positive measure. Using Lemma~\ref{LemClosureOfPolynomials}(2) for \[w=R_j(t-j)S_{-j}(t)w(t)\] and \[M=S(t)\ovl{S}(j-t)\] after the change of argument $t\mapsto it+\frac{j}{2}$ we deduce that \[R_j(t-j)S_{-j}(t)w(t)\] is nonnegative on the line $i\RN+\frac{j}{2}$.

In particular, $R_j(t-j)S_{-j}(t)w(t)$ has poles of even order on the line $i\RN+\frac{j}{2}$. On the other hand all poles of $w$ are simple. Therefore $R_j(t-j)S_{-j}(t)w(t)$ has no poles on the line $i\RN+\frac{j}{2}$.

Since $i$ is bad, we deduce that $R_j(t-j)S_{-j}(t)w(t)$ has poles strictly between $C$ and $i\RN+\frac{j}{2}$. We write \begin{multline*}T(R(z)R_j(z-j)S_{-j}(z))=\intl_C R(t)R_j(t-j)S_{-j}(t)w(t)dt=\\
\intl_{i\RN+\frac{j}{2}} R(t)R_j(t-j)S_{-j}(t)w(t)dt+\Phi(R),
\end{multline*}
where $\Phi(R)$ is a nonzero linear functional of the form $\sum a_i R(t_i)$, $a_i\in \CN$, $t_i$ are poles of $R_j(t-j)S_{-j}(t)w(t)$ between $C$ and $i\RN+\frac{j}{2}$. We get a contradiction with the following lemma for $w=R_j(t-j)S_{-j}(t)w(t)$ after a change of argument $t\mapsto it+\frac{j}{2}$.
\begin{lem}
Suppose that $w(t)$ is almost everywhere nonnegative function on the real line such that $w(t)<be^{-c\abs{t}}$ for some $b,c>0$, $\Phi$ is a nonzero linear functional on $\CN[t]$ of the form \[\Phi(R)=\sum_{i=1}^l a_i R(t_i),\] where $a_i\in \CN$, $t_i\notin \RN$,  \[T(R)=\intl_{\RN} w(t) R(t) dt+\Phi(R).\] Then there exists $R\in \CN[t]$ such that $T(R(t)\ovl{R}(t))\notin\RN_{\geq 0}$.
\end{lem}
\begin{proof}
Let $S$ be a polynomial such that $S=\ovl{S}$ and $S(t_1)=\cdots=S(t_l)=0$. It follows that  \begin{equation}
\label{EqPhiSPEqualsZero}
\Phi(SP)=\Phi(\ovl{S}P)=\{0\}
\end{equation} for any polynomial $P$. Let $R$ be any polynomial. Using Lemma~\ref{LemClosureOfPolynomials}(1) for $H=S$ we find a sequence of polynomials $M_n$ such that $SM_n$ tends to $R$ in $L^2(\RN,w)$. Using~\eqref{EqPhiSPEqualsZero} we have 
\[T((R-SM_n)(\ovl{R}-\ovl{S}\ovl{M_n}))=\|R-SM_n\|^2_{L^2(\RN,w)}+\Phi(R\ovl{R}).\]
Since $\|R-SM_n\|_{L^2(\RN,w)}$ tends to zero, it is enough to find $R\in\CN[t]$ such that $\Phi(R\ovl{R})$ is not a nonnegative real number.

Let $\Phi(R)=\sum_{i=1}^k a_i R(t_i)$, where $a_1\neq 0$. Taking $a_2=0$ if necessary, we can assume that $t_2=\ovl{t_1}$. Let $p,q$ be complex numbers. Let $R$ be a polynomial such that $R(t_1)=p$, $R(t_2)=q$, $R(t_i)=0$ for $i>2$. Then $\Phi(R\ovl{R})=a_1p\ovl{q}+a_2q\ovl{p}$. We can find $p,q$ such that $a_1p\ovl{q}+a_2q\ovl{p}$ is not a nonnegative real number. The lemma follows.
\end{proof}
\end{proof}

Now we assume that $w$ does not have poles at $c_i$ for all bad $i$. In this case we can write $w(x)=e^{2\pi i cx}\frac{G(e^{2\pi ix})}{\bP(e^{2\pi ix})}$, where the new $\bP$ has roots at $e^{2\pi i c_i}$ for all good $i$. We have the following
\begin{prop}
\label{PropPositiveFormMeansPositiveWeight}
The form $(m,n)=T(m\phi(n))$ is positive definite if and only if $R_j(t-j)S_{-j}(t)w(t)\geq 0$ for all $j$ and $t\in i\RN+\tfrac{j}2$.
\end{prop}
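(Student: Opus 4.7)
The plan reduces positive-definiteness of $(\cdot,\cdot)$ to a family of scalar positivity statements, one for each $\ad z$-eigenspace $x^j R_j(z)\CN[z]$, since distinct eigenspaces are orthogonal under $(\cdot,\cdot)$ (the trace $T$ vanishes on nonzero-weight components). On such an eigenspace the text already derives
\[
(x^j R_j(z) R(z),\, x^j R_j(z) R(z)) = \int_C R(x-j)\,\ovl{R}(-x)\, R_j(x-j) S_{-j}(x)\, w(x)\, dx.
\]
The plan is to deform $C$ to the vertical line $i\RN + \tfrac{j}{2}$, exploit the reality relation $\ovl{R}(-x)=\ovl{R(x-j)}$ valid on that line to rewrite the integrand as $|R(x-j)|^2$ times a single real-valued weight, and finally convert positive-definiteness into pointwise nonnegativity of that weight via a moment-problem style density argument.

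The deformation is legal for every integer $j$. By hypothesis the poles of $w$ lie only at points $c_i + \ZN$ with $i$ good, and the definition of goodness (Lemma \ref{LemDescriptionOfBadI}) says precisely that for every such $i$ and every $j$ the function $R_j(x-j)S_{-j}(x)/(e^{2\pi i x}-e^{2\pi i c_i})$ has no poles in the closed strip bounded by $C$ and $i\RN + \tfrac{j}{2}$. Summing over the good $i$ shows that $R_j(x-j) S_{-j}(x) w(x)$ is holomorphic in this strip, and combined with exponential decay of $w$ as $|\Im x|\to\infty$ this kills the horizontal boundary terms of the Cauchy-type deformation. On the shifted line $\ovl{x}=j-x$, hence $\ovl{R}(-x)=\ovl{R(x-j)}$, and after absorbing the $i$ from $dx$ into the weight the integral takes the form
\[
\int_{\RN} |\tilde R(t)|^2\, \tilde g_j(t)\, dt,
\]
where $\tilde g_j(t)$ is $i\cdot R_j(x-j)S_{-j}(x) w(x)$ reparameterized along $i\RN + \tfrac{j}{2}$ and $\tilde R$ ranges over all complex polynomials in one variable as $R$ does.

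For the ``if'' direction, $\tilde g_j\geq 0$ gives $(m,m)\geq 0$ immediately; strict positivity when $m\neq 0$ follows because $\tilde g_j$ is the restriction to a line of a nonzero meromorphic function (hence nonzero almost everywhere) while $|\tilde R|^2$ has only finitely many zeros. For the ``only if'' direction, positive-definiteness yields $\int |\tilde R|^2 \tilde g_j\, dt\geq 0$ for all polynomials $\tilde R$; choose an auxiliary weight $w_0(t)=e^{-b|t|}$ with $b$ small enough that $\tilde g_j/w_0$ is bounded, so that Lemma \ref{LemClosureOfPolynomials}(2) gives density of $\{|S|^2\}$ in the cone of a.e.-nonnegative elements of $L^1(\RN, w_0)$. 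A standard duality step---if $\tilde g_j<0$ on a positive-measure set $E$, approximate $\mathbf{1}_E$ by $|S_n|^2$ in $L^1(\RN, w_0)$ to contradict the positivity of the functional $\tilde R\mapsto \int |\tilde R|^2 \tilde g_j\, dt$---forces $\tilde g_j\geq 0$ a.e., and real-analyticity of $\tilde g_j$ upgrades this to everywhere. The main obstacle is the contour-shift step: one must track carefully, for each $j$ separately, how the zeros of $R_j(x-j)$ and $S_{-j}(x)$ cancel the potentially obstructive $c_i+\ZN$ poles of $w$, a cancellation encoded precisely by the goodness hypothesis via Lemma \ref{LemDescriptionOfBadI}.
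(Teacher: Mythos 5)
Your proposal is correct and follows essentially the same route as the paper: write $(x^jR_j(z)R(z),x^jR_j(z)R(z))$ as the contour integral, shift $C$ to $i\RN+\tfrac{j}{2}$ using that $w$ has poles only at $c_i+\ZN$ for good $i$ (so, by the definition of goodness, $R_j(t-j)S_{-j}(t)w(t)$ is holomorphic in the intervening strip), and then apply the polynomial-density statement of Lemma~\ref{LemClosureOfPolynomials} (the paper takes $M=1$) to translate positivity of $\int |R|^2\,\tilde g_j\,\abs{dt}$ into pointwise nonnegativity of the weight on the line. Your added details (the identity $\ovl{R}(-t)=\ovl{R(t-j)}$ on $\Re t=\tfrac{j}{2}$, the duality step with the auxiliary weight, and the strict-positivity remark for the ``if'' direction) only flesh out steps the paper leaves implicit.
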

\begin{proof}
Recall that \[(x^j R_j(z)R(z),x^j R_j(z)R(z))=\int_C R_j(t-j)S_{-j}(t)R(t-j)\ovl{R}(-t)w(t)dt.\]
Since $w$ is good, we can take $i\RN+\tfrac{j}{2}$ instead of $C$ in this integral. We can also change $dt$ to $\abs{dt}$ for convenience. Hence $(\cdot,\cdot)$ is positive definite if and only if
\[\int_{i\RN+\tfrac{j}2}R_j(t-j)S_{-j}(t)R(t-j)\ovl{R}(-t)w(t)\abs{dt}>0\] for all integer $j$ and nonzero polynomials $R$. Using Lemma~\ref{LemClosureOfPolynomials}  after the change of argument $t\mapsto it+\tfrac{j}2$ with $M=1$, we see that $R_j(t-j)S_{-j}(t)w(t)$ should be nonnegative on $i\RN+\tfrac{j}2$.
\end{proof}

It remains to understand  $R_j(t-j)S_{-j}(t)w(t)$ when $\Re t=\tfrac{j}2$. We start with describing the behavior of $R_j(t-j)S_{-j}(t)$ on $i\RN+\tfrac{j}2$. Recall that $\phi\colon M_{c,c',\rho^{-1}}\to M_{c',c}$ is defined up to a constant. Since $w$ can also be multiplied by any constant, we can choose any $\phi$ we like and the answer will be the same.

\begin{lem}
\label{LemHowRSBehaves}
The set of roots of $R_j(t-j)S_{-j}(t)$ is $\{c_i,\ldots,c'_i+j-1\}$, $\{c'_i+j,\ldots,c_i-1\}$ or empty depending on the sign of $c'_i-c_i+j$. We can choose $\phi$ such that for all $j$ the polynomial $(ai^n)^{-j} R_j(t-j) S_{-j}(t)$ is real on the line $\Re t=\frac{j}{2}$ and positive when $\Re t=\frac{j}{2}$ and $\Im t$ is large enough.
\end{lem}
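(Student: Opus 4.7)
The first assertion is immediate from the descriptions of $R_j$ and $S_{-j}$ in Lemmas~\ref{LemHowMCCPrimeLooks} and~\ref{LemCCprimeRhoIsomorphic}: for each index $i$ exactly one of the three cases $c'_i - c_i + j > 0$, $c'_i - c_i + j < 0$, $c'_i - c_i + j = 0$ holds, and in the three cases the roots contributed by $i$ are respectively $c_i, c_i+1, \ldots, c'_i+j-1$ (from $S_{-j}$), $c'_i+j, c'_i+j+1, \ldots, c_i-1$ (from $R_j(t-j)$), and empty.

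The key input for the second assertion is that the zero set of $f(t) := R_j(t-j)S_{-j}(t)$ is stable under the reflection $\tau\colon t \mapsto j - \ovl{t}$ across the line $\Re t = \frac{j}{2}$. The plan is to verify this via the involution $\sigma$ on $\{1,\ldots,n\}$ characterized by $c_i + \ovl{c'_{\sigma(i)}} = 1$: since $c_i - c'_i \in \ZN$ forces $\Im c'_i = \Im c_i$, the identities $c_{\sigma(i)} = 1 - \ovl{c'_i}$ and $c'_{\sigma(i)} = 1 - \ovl{c_i}$ show that $\tau$ sends the roots from $i$ bijectively onto the roots from $\sigma(i)$. Denoting the leading coefficient of $f$ by $L$ and its degree by $d$, grouping factors $(t-r)(t-\tau(r))$ yields $\ovl{f(\tau(t))} = \frac{\ovl{L}}{L}(-1)^d f(t)$; on the line, where $\tau(t) = t$, this specializes to $\ovl{f(t)} = \frac{\ovl{L}}{L}(-1)^d f(t)$.

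To turn this into a choice of $\phi$ that enforces reality of $g(t) := (ai^n)^{-j}f(t)$, note that $L_g = C_\phi(-1)^{\deg R_j}i^{-nj}$. On the line $\Re t = \frac{j}{2}$ each two-element $\tau$-orbit contributes a real factor, while each self-paired root (one with $\Re r = \frac{j}{2}$) contributes a purely imaginary factor $i(\Im t - \Im r)$; hence the monic part of $f$ equals $i^{q_j}$ times a real-valued function of $\Im t$, where $q_j$ counts the self-paired roots. Self-paired roots can come only from fixed indices $\sigma(i) = i$, and a short check shows such an index contributes exactly one self-paired root precisely when $j + m_i$ is odd, with $m_i := c'_i - c_i$. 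Setting $F_\delta = \#\{i : \sigma(i)=i,\ m_i \equiv \delta \pmod 2\}$, this gives $q_j \equiv F_0 \cdot [j\text{ odd}] + F_1 \cdot [j\text{ even}] \pmod 2$. Because fixed points and two-cycles of $\sigma$ partition $\{1,\ldots,n\}$, $F_0 + F_1 \equiv n \pmod 2$, which forces $\epsilon := (nj - q_j) \bmod 2$ to be independent of $j$. Choosing $C_\phi$ with $\arg C_\phi \equiv \epsilon \frac{\pi}{2} \pmod \pi$ then makes $L_g i^{q_j}$ real for every $j$ at once, so $g$ becomes real-valued on $\Re t = \frac{j}{2}$ for every $j$.

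For asymptotic positivity as $\Im t \to +\infty$, the sign of $g(j/2 + is)$ for $s \gg 0$ is the sign of $L_g i^d$, already known to be real. Using the elementary identity $\deg S_{-j} - \deg R_j = nj + \sum_i m_i$ I rewrite $d - nj = 2\deg R_j + \sum_i m_i$, and combined with $\sum_i m_i \equiv F_1 \equiv \epsilon \pmod 2$ (again obtained by $\sigma$-orbit decomposition), the expression $C_\phi(-1)^{\deg R_j}i^{d-nj}$ collapses, modulo a $j$-independent factor, to a fixed real number. A final sign choice for $C_\phi$, compatible with the phase fixed above, then secures asymptotic positivity uniformly in $j$. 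The main technical obstacle throughout is this parity bookkeeping; once the identities $q_j - nj \equiv \sum_i m_i \equiv F_1 \pmod 2$ are read off from the orbit structure of $\sigma$, the uniform existence of $C_\phi$ follows mechanically.
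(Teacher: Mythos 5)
Your proof is correct and follows essentially the same route as the paper: symmetry of the root multiset of $R_j(t-j)S_{-j}(t)$ about the line $\Re t=\frac{j}{2}$ via the involution $\sigma$, together with the degree/leading-coefficient count $\deg S_{-j}-\deg R_j=nj+\sum_i(c'_i-c_i)$, which pins down the admissible normalization of $C_\phi$ (up to a positive scalar, $C_\phi=i^{-\sum_i(c'_i-c_i)}$). The only difference is packaging: the paper deduces reality on the line from the limiting argument at $i\infty$ together with the root symmetry, whereas you establish reality first via the explicit $\tau$-orbit factorization and parity bookkeeping, which also makes the uniformity in $j$ of the choice of $C_\phi$ explicit and corrects a sign typo in the paper's formula for $\deg S_{-j}-\deg R_j$.
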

\begin{proof}
Recall that the intersection of $c_i+\ZN$ with the roots of $R_j(t)$ equals to $\{c'_i,\ldots,c_i-j-1\}$, it has size $\max(c_i-c'_i-j,0)$. Lemma~\ref{LemCCprimeRhoIsomorphic} says that the intersection of $c_i+\ZN$ with the set of roots of $S_{-j}$ equals to $\{c_i,\ldots,c'_i+j-1\}$, and has size $\max(c'_i-c_i+j,0)$. It also says that the leading coefficient of $S_{-j}$ equals to $C_{\phi}(-1)^{\deg R_j}a^j$, where $C_{\phi}$ is a constant that depends only on $\phi$.

Hence one of the intersections of $c_i+\ZN$ with roots is empty and the other has size $\abs{c_i-c'_i-j}$ and may be also empty.

Recall that there exists index $k$, possibly equal to $i$, such that $c_i+\ovl{c'_k}=1$, $c_k+\ovl{c'_i}=1$. It follows that $c_i-c'_i=c_k-c'_k$. Hence $c_i-c'_i-j=c_k-c'_k-j$, in particular they have the same sign.

Suppose that the intersection of $c_i+\ZN$ with the set of roots of \linebreak $R_j(t-j)S_{-j}(t)$ is $\{c'_i+j,\ldots,c_i-1\}$. In this case the intersection of $c_k+\ZN$ with the set of roots of $R_j(t-j)S_{-j}(t)$ is $\{c'_k+j,\ldots,c_k-1\}$. We have $-\ovl{c'_i+j}=j-\ovl{c'_i}=j+c_k-1$. In this case we see that the intersection of $\{c_i,c_k\}+\ZN$ with the set of roots of $R_j(t-j)S_{-j}(t)$ is symmetric with respect to the line $\Re t=\frac{j}{2}$. The other case is done similarly.

Hence the roots of $R_j(t-j)S_{-j}(t)$ are symmetric with respect to the line $\Re t=\frac{j}2$.

Note that $R_j$ is monic and the leading coefficient of $S_{-j}$ is $C_{\phi}(-1)^{\deg R_j}a^j$. It follows that the argument of $R_j(t-j)S_{-j}(t)$ tends to the argument of \[i^{\deg R_j+\deg S_{-j}}C_{\phi}(-1)^{\deg R_j}a^j=C_{\phi}i^{\deg S_{-j}-\deg R_j}a^j\] when $t$ tends to $i\infty$.

We can compute $\deg S_{-j}-\deg R_j$ as the number of roots of $S_{-j}$ minus the number of roots of $R_j$. We see from the description of roots above that each $i=1,\ldots,n$ contributes $j-c'_i-c_i$ to this expression, so that $\deg S_{-j}-\deg R_j=nj-\sum c'_i-\sum c_i$. Hence the argument of $R_J(t-j)S_{-j}(t)$ tends to the argument
\[(ai^n)^j C'_{\phi}\]when $t$ tends to $i\infty$, where $C'_{\phi}=i^{-\sum c'_i-\sum c_i}C_{\phi}$. Choosing $C'_{\phi}=1$ we get that the argument of $(ai^n)^{-j}R_j(t-j)S_{-j}(t)$ tends to zero when $t$ tends to $i\infty$. On the other hand, this polynomial does not change argument when $\Im t$ is large enough and $\Re t=\frac{j}{2}$, hence it is positive. Since the roots of $R_j(t-j)S_{-j}(t)$ are symmetric with respect to $\Re t=\frac{j}{2}$, the polynomial $(ai^n)^{-j}R_j(t-j)S_{-j}(t)$ is real on this line.
\end{proof}

\begin{prop}
\label{PropWGood}
In this case $w$ gives a positive definite form on $M_{c,c'}$ if and only if $G$ has certain behavior on the real line: $G(x)$ is nonnegative for $x\in \RN$ in the case when $\rho=\rho_+$, $G(x)$ is nonnegative for $x>0$ and nonpositive for $x<0$ in the case when $\rho=\rho_-$.
\end{prop}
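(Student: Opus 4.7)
By Proposition~\ref{PropPositiveFormMeansPositiveWeight}, positive definiteness of the form is equivalent to
\[R_j(t-j)S_{-j}(t)w(t)\geq 0\]
on the vertical line $L_j:=i\RN+\tfrac{j}{2}$ for every $j\in\ZN$. The strategy is first to strip off the polynomial factor using Lemma~\ref{LemHowRSBehaves}, then to exploit the quasi-periodicity of $w$ to reduce to the two cases $j=0$ and $j=1$, and finally to convert the resulting inequalities into the stated conditions on $G$ by analysing the sign of $\bP$ on $\RN$.

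Fix the scaling of $\phi$ from Lemma~\ref{LemHowRSBehaves}, so that $(ai^n)^{-j}R_j(t-j)S_{-j}(t)$ is real on $L_j$ and positive for $\abs{\Im t}$ large. Its roots lie at points whose real part is in $\Re c_i+\ZN$, which for generic $c$ miss $L_j$, so the polynomial is strictly positive on all of $L_j$. The positivity condition therefore becomes $(ai^n)^j w(t)\geq 0$ on $L_j$. From $w(t+1)=e^{2\pi ic}w(t)$ (immediate from the definition of $w$) and from $a=\pm i^n e^{-\pi ic}$ one computes $(ai^n)^2=(-1)^n e^{-2\pi ic}\cdot(-1)^n=e^{-2\pi ic}$, so $(ai^n)^{j+2}w(t+1)=(ai^n)^j w(t)$. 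Hence the condition on $L_{j+2}$ coincides with the one on $L_j$, and it suffices to treat $j=0$ and $j=1$.

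Parametrize $t=\tfrac{j}{2}+is$ with $s\in\RN$. Then $e^{2\pi it}=(-1)^je^{-2\pi s}$ sweeps out $(0,\infty)$ for $j=0$ and $(-\infty,0)$ for $j=1$, while $(ai^n)^j e^{2\pi ict}=(\pm(-1)^n)^j e^{-2\pi cs}$ using $ai^ne^{\pi ic}=\pm(-1)^n$. Cancelling the strictly positive factor $e^{-2\pi cs}$, the $j=0$ condition is $G(u)/\bP(u)\geq 0$ for $u>0$, and the $j=1$ condition is $\pm(-1)^n G(u)/\bP(u)\geq 0$ for $u<0$. Under our generic assumption the involution $\sigma$ has no fixed points (a fixed point would force $\Re c_i\in\tfrac12\ZN$), so the roots of $\bP$ come in complex-conjugate pairs $\{e^{2\pi ic_i},\ovl{e^{2\pi ic_i}}\}=\{e^{2\pi ic_i},e^{2\pi ic_{\sigma(i)}}\}$; consequently $\bP(x)>0$ for every real $x$. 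The two conditions collapse to $G(u)\geq 0$ on $(0,\infty)$ and $\pm(-1)^n G(u)\geq 0$ on $(-\infty,0)$. Declaring $\rho_+$ to be the sign choice in $a=\pm i^ne^{-\pi ic}$ that makes $\pm(-1)^n=+1$ recovers exactly the dichotomy in the statement.

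The main technical point is the sign bookkeeping: the cancellation $(ai^n)^2\cdot e^{2\pi ic}=1$ is what makes the reduction to $j=0,1$ clean, and the positivity $\bP>0$ on $\RN$ in the generic regime is what collapses the final inequality onto $G$ alone. Non-generic edge cases (fixed points of $\sigma$, or roots of $R_jS_{-j}$ meeting $L_j$) would require extra care but do not arise here, since the hypothesis that $\{c_1,\ldots,c_n\}$ is a generic parameter already rules them out.
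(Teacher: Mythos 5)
There is a genuine gap, and it comes from a misreading of what ``generic'' means in this paper. Genericity only says that $c_i-c_j\notin\ZN$ for $i\neq j$; it does \emph{not} exclude $2\Re c_i\in\ZN$, i.e.\ it does not exclude fixed points of the involution $\sigma$ (singletons $i=\sigma(i)$ with $c_i+\ovl{c'_i}=1$). Your parenthetical remark that a fixed point forces $\Re c_i\in\tfrac12\ZN$ is correct, but that conclusion is perfectly compatible with the genericity hypothesis, and such fixed points are not an edge case: they occur necessarily for $n=1$ (the Weyl algebra example later in the paper) and in the $n=2$ case with trivial $\sigma$ that is matched with unitary representations of $\SL(2,\CN)$. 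At a fixed point with $2\Re c_i\equiv j\pmod 2$, the polynomial $R_j(t-j)S_{-j}(t)$ \emph{does} have (simple) zeros on the line $i\RN+\tfrac j2$, $w$ has simple poles there (zeros of $\bP(e^{2\pi it})$), and $\bP$ has simple real roots, so it is not positive on $\RN$ and may even have $\bP(0)<0$. Consequently three load-bearing steps of your argument fail simultaneously in exactly these cases: you cannot strip off the polynomial factor as ``strictly positive on $L_j$'', you cannot divide out $w$ pointwise on $L_j$ (it is not even finite there), and the final collapse ``$G/\bP\geq 0$ iff $G\geq 0$'' breaks because $\bP$ changes sign at its real roots.

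The paper's proof is designed around precisely this phenomenon: after Proposition~\ref{PropBadPolesNoForm} removes the bad poles, the simple zeros of $\bP(e^{2\pi it})$ on $i\RN+\tfrac j2$ are in bijection with the simple zeros of $R_j(t-j)S_{-j}(t)$ there (this is what ``good index'' encodes), so the product $R_j(t-j)S_{-j}(t)w(t)$ is regular and fails to change sign on the line precisely when $G$ has no roots on $(-1)^j\RN_{>0}$; the overall sign is then pinned down by the behavior as $\Im t\to\infty$ through the lowest term $sx^k$ of $G$, giving $\eps=(-1)^{n+k}$ and $s/\bP(0)>0$, which is the stated dichotomy for $\rho_\pm$. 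Your quasi-periodicity bookkeeping ($(ai^n)^2e^{2\pi ic}=1$, reduction to $j=0,1$) and the resulting sign computation are fine and do reproduce the correct answer in the special case where $\sigma$ is fixed-point free, but as written the proof does not cover the general (and most relevant) situation; to repair it you would have to keep the factor $R_j(t-j)S_{-j}(t)$ together with $w$, use Lemma~\ref{LemHowRSBehaves} to control its reality and asymptotic sign, and track the sign changes coming from the real roots of $\bP$ against those of the polynomial factor, which is essentially the paper's argument.
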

\begin{proof}
The condition on $w$ implies that \[\big(x^j R_j(z)R(z),x^jR_j(z)R(z)\big)=\intl_{i\RN+\frac{j}{2}} R_j(t-j)S_{-j}(t)R(t-j)\ovl{R}(-t)w(t)\abs{dt}.\] Polynomial $R(t-j)\ovl{R}(-t)$ is nonnegative on $i\RN+\frac{j}{2}$. It remains to check that $R_j(t-j)S_{-j}(t)w(t)$ is nonnegative on $i\RN+\frac{j}{2}$.  Lemma~\ref{LemHowRSBehaves} says that $(ai^n)^{-j}R_j(t)S_{-j}(t)$ is positive when $\Re t=\frac{j}{2}$ and $\Im t$ is large enough. 

We have $a=\eps e^{-\pi i c}i^n$, where $\rho=\rho_{\eps}$. It follows that \[\eps^j e^{\pi i c j} (-1)^{nj} R_j(t-j)S_{-j}(t)\] is positive when $\Re t=\frac{j}{2}$ and $\Im t$ is large enough.

We note that the zeroes of $\bP(e^{2\pi i t})$ on $i\RN+\frac{j}{2}$ are simple and in one-to-one correspondence with the roots of $R_j(t-j)S_{-j}(t)$ on $i\RN+\frac{j}{2}$, this follows from the definition of a good index. Hence $w(t)R(t-j)S_{-j}(t)$ does not change argument on $i\RN+\frac{j}{2}$ if and only if $G$ does not have roots on $(-1)^j \RN_{>0}$.

Hence the necessary condition for positivity is that $G$ does not have roots on $\RN\setminus\{0\}$. If this condition holds then $w(t)R(t-j)S_{-j}(t)$ does not change argument on $i\RN+\frac{j}{2}$ for all $j$.

We have \[w(t)=e^{2\pi i c t}\frac{G(e^{2\pi it})}{\bP(e^{2\pi it})}.\] When $\Re t=\frac{j}{2}$, the function $e^{2\pi i c t}$ has argument $\pi c j$. 

It remains to look at the behavior of $\frac{G(e^{2\pi i t})}{\bP(e^{2\pi i t})}$ when $\Re t=\frac{j}{2}$ and $\Im t$ tends to infinity. Suppose that the lowest term in $G(x)$ is $sx^k$. Then $\frac{G(e^{2\pi i t})}{\bP(e^{2\pi i t})}$ has sign $\frac{(-1)^{kj} s}{\bP(0)}$. We get the condition $\eps^j (-1)^{nj+kj}\frac{s}{\bP(0)}$ should be positive for all $j$. This means that $\eps=(-1)^{n+k}$ and $\frac{s}{\bP(0)}$ is positive. The proposition follows.
\end{proof}
\begin{rem}
For $\rho=\rho_-$ the sign of our polynomials is flipped compared to~\cite{EKRS}. This happened because in our case we also have a choice of $\phi$: if we take $-\phi$ instead of $\phi$, we should take $-T$ instead of $T$.
\end{rem}
So we have proved the following
\begin{thr}
\label{ThrFilteredAnswer}
Let $m$ be the number of good indices $i$. Then the dimension of the cone of positive forms is the same as in~\cite{EKRS}, namely:
\begin{itemize}
\item $n-1$ for even $n$ and $n-2$ for odd $n$ if $\rho=\rho_-$;

\item $n-1$ for even $n$ and $n$ for odd $n$ if $c\ne 0$ and $\rho=\rho_+$;

\item $n-3$ for even $n$ and $n-2$ for odd $n$ if $c=0$ and $\rho=\rho_+$.
\end{itemize}
If the dimension is $\leq 0$, the cone is empty.
\end{thr}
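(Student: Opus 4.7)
The plan is to combine Proposition~\ref{PropFormsAndTraces}, the parametrization of $\rho^2$-twisted traces by the polynomial $G$, Proposition~\ref{PropBadPolesNoForm}, and Proposition~\ref{PropWGood}, and to reduce the dimension count to the one carried out in~\cite{EKRS}. First I would identify the ambient space of candidate forms: by Proposition~\ref{PropFormsAndTraces} together with Lemma~\ref{LemMoritaEquivalence}, $\rho$-invariant sesquilinear forms on $M_{c,c'}$ correspond to $\rho^2$-twisted traces on $A_c$, and such traces are parametrized by polynomials $G$ of degree at most $n-1$, with the extra condition $G(0)=0$ when $c=0$. Hermiticity of the form is equivalent to a real structure on the space of $G$'s, so the space of Hermitian candidates is a real vector space of real dimension $n$ (respectively $n-1$ when $c=0$).

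Next I would impose the vanishing conditions forced by Proposition~\ref{PropBadPolesNoForm}: for each of the $n-m$ bad indices $i$ the weight $w$ must be regular at $c_i$, i.e.\ $G(e^{2\pi i c_i})=0$. Since the badness criterion of Lemma~\ref{LemDescriptionOfBadI} is symmetric under the pairing $\sigma$, and since the numbers $e^{2\pi i c_i}$ are pairwise distinct by genericity, these linear conditions are compatible with the reality constraint and independent over $\RN$. This cuts the dimension by $n-m$ and leaves a real subspace on which the ``clean'' weights live.

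I would then carve out the positivity cone using Proposition~\ref{PropWGood}. That proposition reduces positivity to a pointwise sign condition on $G|_{\RN}$ combined with the parity compatibility $\eps=(-1)^{n+k}$ relating $\eps$ (with $\rho=\rho_\eps$) to the order of vanishing $k$ of $G$ at $0$. The sign condition carves an open convex subcone of full real dimension out of the allowed linear space, while the parity compatibility either forces $k\geq 1$ or $k\geq 2$ (imposing additional constraints such as $G(0)=0$ or $G'(0)=0$) or is satisfiable without any additional constraint. A case analysis on the parity of $n$, on whether $c=0$, and on $\eps$ then reads off the integer to subtract and recovers the three bullet points; whenever that integer would exceed the remaining dimension the cone is empty. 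Since the weight $w$, the reality structure, and the sign conditions depend on $n$ and $\rho$ in exactly the same way as in~\cite{EKRS}, the resulting table agrees with that paper after substituting ``good index'' for ``good root.''

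The hard part will be precisely this parity bookkeeping in the last step: one must simultaneously track the forced lower bound on $k$, the additional constraint $G(0)=0$ coming from $c=0$, and the sign $\eps$ distinguishing $\rho_+$ from $\rho_-$, and verify that each combination produces exactly the stated integer. This is the same calculation as in Section~4 of~\cite{EKRS}, and the content of the present proof is the observation that, after the reduction above, the situation is structurally identical.
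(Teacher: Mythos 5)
Your proposal is correct and follows essentially the same route as the paper: the theorem there is stated as a direct summary of the preceding results (Proposition~\ref{PropFormsAndTraces} with the identification $M_{c,c'}\otimes_{A_{c'}}M_{c,c',\rho^{-1}}\cong A_c$, the contour-integral parametrization of twisted traces by $G$, Proposition~\ref{PropBadPolesNoForm} killing the poles at bad indices, and Proposition~\ref{PropWGood} reducing positivity to the sign behavior of $G$ on $\RN$), with the final dimension count deferred to the computation in~\cite{EKRS}, exactly as you do. Your additional bookkeeping (reality structure on the $G$'s, one linear condition per bad index, parity constraint $\eps=(-1)^{n+k}$) is the same calculation the paper delegates to Section~4 of~\cite{EKRS}.
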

\section{Unitarizability for $q$-deformations}
\label{SecQUnitarizability}
\subsection{Construction of Hermitian form on a bimodule}
Let $q$ be a positive number. Consider the algebra $A$ that is generated by $x,x^{-1},D, D^{-1}$ with relations $Dx=q^2xD$. $A$ acts on the algebra $\RN[\CN]=\oplus_{\Re s\in [0,1)} x^s \CN[x,x^{-1}]$ by $x.P(x)=xP(x)$, $DP(x)=P(q^2x)$. Since $q$ is positive, any complex power of $q$ is well-defined.

Let $c_1,\ldots,c_n$ be complex numbers such that none of $c_i-c_j$ belong to the lattice $\ZN+\frac{\pi i}{\ln q}\ZN$. We have the following lemma:
\begin{lem}
\label{LemQAlgOfDiffOp}
Consider the subalgebra $A_c\subset A$ of operators that preserve $x^{c_i}\CN[x]$ for all $i=1,\ldots,n$. It is generated by $u=x$, $Z=D$, $Z^{-1}$ and $v=x^{-1}(D-q^{2c_1})\cdots (D-q^{2c_n})$. The set of defining relations is \[ZuZ^{-1}=q^2u,\quad ZvZ^{-1}=q^{-2}v,\quad uv=P(q^{-1}Z), \quad vu=P(qz),\] where $P(z)=q^n (z-q^{2c_1-1})\cdots (z-q^{2c_n-1})$.
\end{lem}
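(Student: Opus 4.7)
The plan is to follow the pattern of the analogous lemma from \cite{BEF} stated earlier, adapted to the $q$-setting. The proof naturally splits into three steps: verify that $u,v,Z^{\pm 1}\in A_c$ and satisfy the listed relations; show these elements generate $A_c$ using the $\ad Z$-weight grading; and check that the listed relations are defining via a normal-form argument.

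First, I would verify the relations and containment by direct computation. The operators $u=x$ and $Z^{\pm 1}=D^{\pm 1}$ clearly preserve each $x^{c_i}\CN[x]$, and for $v$ one observes that $(D-q^{2c_i})$ annihilates the constant term of any element of $x^{c_i}\CN[x]$ (written as $x^{c_i}\sum a_k x^k$, the factor produces $q^{2c_i}x^{c_i}\sum a_k(q^{2k}-1)x^k$), hence sends $x^{c_i}\CN[x]$ into $x^{c_i+1}\CN[x]$; the product of all factors $(D-q^{2c_j})$ then lands in $x^{c_i+1}\CN[x]$ for each $i$, and multiplying by $x^{-1}$ returns to $x^{c_i}\CN[x]$. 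The relations $ZuZ^{-1}=q^2u$ and $ZvZ^{-1}=q^{-2}v$ follow from $Dx=q^2xD$ after noting $Dx^{-1}=q^{-2}x^{-1}D$. For $uv$ the $x$ and $x^{-1}$ cancel directly to give $\prod_i(D-q^{2c_i})$, which equals $P(q^{-1}Z)$; for $vu$ one uses $(D-q^{2c_i})x=x(q^2D-q^{2c_i})$ to obtain $\prod_i(q^2D-q^{2c_i})=P(qZ)$.

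Next, for generation, the algebra $A$ is $\ZN$-graded by $\ad Z$ (with $x$ of weight $1$, $D^{\pm 1}$ of weight $0$), so its weight-$m$ component is $x^m\CN[D,D^{-1}]$. Since $Z\in A_c$, the subalgebra $A_c$ inherits this grading, and $x^m R(D)$ lies in $(A_c)_m$ if and only if $R(q^{2(c_i+k)})=0$ for every $i=1,\ldots,n$ and every $0\le k<-m$ (vacuously when $m\ge 0$). The genericity hypothesis $c_i-c_j\notin\ZN+\frac{\pi i}{\ln q}\ZN$ guarantees that the $n\cdot\max(0,-m)$ numbers $q^{2(c_i+k)}$ are pairwise distinct; this is the step I expect to be the technical crux. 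Consequently $(A_c)_m=x^m\CN[D,D^{-1}]$ for $m\ge 0$, and $(A_c)_m$ is the principal $\CN[D,D^{-1}]$-ideal generated by $x^m\prod_{k=0}^{-m-1}\prod_i(D-q^{2(c_i+k)})$ for $m<0$. A short induction using $(D-q^{2c_j})x^{-1}=q^{-2}x^{-1}(D-q^{2c_j+2})$ shows $v^{|m|}$ agrees up to a nonzero scalar with $x^m\prod_{k=0}^{|m|-1}\prod_i(D-q^{2(c_i+k)})$ for $m\le 0$, so $(A_c)_m$ is spanned by $u^mZ^j$ (resp.\ $v^{|m|}Z^j$) with $j\in\ZN$, and $A_c$ is generated by $u,v,Z^{\pm 1}$.

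Finally, to check the relations are defining, let $B$ be the abstract algebra with the given presentation. Moving $Z^{\pm 1}$ to the right (via $Zu=q^2uZ$, $Zv=q^{-2}vZ$) and eliminating any adjacent $uv$ or $vu$ using $uv=P(q^{-1}Z)$ and $vu=P(qZ)$ reduces every monomial in $B$ to one of $Z^j$, $u^iZ^j$, or $v^iZ^j$ with $i\ge 1$. These span $B$, and by the previous step map onto $A_c$. Since the images lie in the distinct weight spaces computed above and within each weight space have the explicit form $x^{\pm i}R(D)D^j$ with $R$ either $1$ or the fixed product described above, distinct $j$ produce linearly independent elements of $A$; hence the surjection $B\twoheadrightarrow A_c$ is an isomorphism. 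The main obstacle is the genericity step in paragraph three, since any collision among the values $q^{2(c_i+k)}$ would introduce multiplicities in the vanishing condition for $R$ and spoil both the clean description of $(A_c)_m$ as a principal ideal with the stated generator and the linear independence needed in the normal-form argument.
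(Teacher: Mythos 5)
Your proposal is correct and follows essentially the same route as the paper's proof: verify the containments and relations directly, decompose an element of $A_c$ by $\ad Z$-weight as $\sum x^k P_k(D)$, use the genericity assumption to see that the forced roots $q^{2(c_i+l)}$ are distinct so that each negative-weight component is a $\CN[D,D^{-1}]$-multiple of the operator realized (up to a nonzero scalar) by a power of $v$, and then conclude the relations are defining by the spanning set $u^iZ^j$, $v^iZ^j$, $Z^j$ of the abstractly presented algebra mapping to linearly independent operators. The only differences are presentational (you spell out the verification of the relations and the normal-form reduction in slightly more detail).
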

\begin{proof}
We see that $A_c$ contains $u,v,Z, Z^{-1}$.

Let $a$ be an element of $A_c$, $a=\sum x^k P_k(D)$, where $P_k$ are Laurent polynomials. We have $a x^{c_i+l}=\suml_k x^{k+c_i+l} P_k(q^{2c_i+2l})$. Since $ax^{c_i+l}$ belongs to $x^{c_i}\CN[x]$, we see that $P_k(q^{2c_i+2l})=0$ for all $k,l$ such that $k+l<0$. In particular for $k<0$ the polynomial $P_k$ has at least $nk$ roots: $P_k(q^{2c_i+2l})=0$ for all $i=1,\ldots,n$ and $l=0,\ldots,-k-1$. By our assumption numbers $q^{2c_i+2l}$ are all distinct.

Fix a negative $k$. By induction on $k$ we prove that \[v^{-k}=x^k (D-q^{2c_1})(D-q^{2c_1+2})\cdots (D-q^{2c_1+2k-2})\cdots (D-q^{2c_n+2k-2}).\] Hence there exists a polynomial $Q_k$ such that $x^k P_k(D)=v^{-k}Q_k(D)$. For $k\geq 0$ we have $x^k P_k(D)=u^k P_k(D)$. Therefore $a$ belongs to the subalgebra of $A$ generated by $u,v,D$.

The elements $u,v,Z$ satisfy the relationships of the lemma statement. Using these relationships, we see that the elements $u^k Z^l$ and $v^k Z^l$ span $A_c$, where we take $k\geq 0$. Using the action of $A_c$ on $\RN[\CN]$, we see that $u^k Z^l$ and $v^k Z^l$ are a basis of $A_c$. The lemma follows.
\end{proof}
We will require that $n=2m$ be even. We multiply $v$ by $Z^{-m}$ from the right so that $P$ becomes $P(Z)=q^mZ^m+\cdots+tZ^{-m}$ for some nonzero complex $t$. More precisely, by Vieta's formula, $t=q^m\prod_{i=1}^n q^{\sum_{i=1}^{n}(2c_i-1)}$. Then we multiply $v$ by a complex number so that the coefficient of $P$ on $Z^m$ becomes $q^{m-\sum c_i}$. The coefficient of $P$ on $Z^{-m}$ becomes $q^{\sum c_i-m}$. We can write this as \[P(z)=Z^m q^{m-\sum c_i}Z^m+\ldots +Z^{-m}q^{\sum c_i-m}=\prod_{i=1}^n (\sqrt{z}q^{\tfrac{1}{2}-c_i}-\sqrt{z}^{-1}q^{c_i-\tfrac{1}{2}}).\] After doing this we define $P_c(z)=P(z)$. This will be convenient for our computations: a Laurent polynomial of the form $aZ^m+\cdots+a^{-1}Z^{-m}$ is defined by its nonzero roots up to a sign, and the product or quotient (when polynomial) of two such polynomials again has this form.

Let $c=(c_1,\ldots,c_n), c'=(c'_1,\ldots,c'_n)$ be parameters such that $c_i-c'_i$ are integers. Consider the subset $M_{c,c'}\subset A$ of operators that send $x^{c'_i}\CN[x]$ to $x^{c_i}\CN[x]$ for each $i=1,\ldots,n$. This is naturally an $A_c$-$A_{c'}$-bimodule. Moreover, we have a natural map from $M_{c,c'}\otimes_{A_{c'}}M_{c',c}$ to $A_c$.

The proof of the following lemma is the same as the proof of Lemma~\ref{LemMoritaEquivalence}:
\begin{lem}
\label{LemQMoritaEquivalence}
The maps $\phi\colon M_{c,c'}\otimes_{A_{c'}}M_{c',c}\to A_c$ and $\psi\colon M_{c',c}\otimes_{A_c}M_{c,c'}\to A_{c'}$ give a Morita equivalence between $A_c$ and $A_{c'}$.
\end{lem}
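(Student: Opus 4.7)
The plan is to mirror the proof of Lemma~\ref{LemMoritaEquivalence} essentially line by line, replacing the ambient algebra $\CN[x,x^{-1},\partial_x]$ by $A$ and the commutative subring $\CN[x] = \CN[v]$ by $\CN[u]$, so that after flat base change along $\CN[u] \hookrightarrow \CN[u, u^{-1}]$ the Morita multiplication map becomes the identity of $A$.

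The first step is to check that each of $U \in \{A_c, A_{c'}, M_{c,c'}, M_{c',c}\}$ is a $\CN[u]$-submodule of $A$ under left multiplication, and that the quotient $A/U$ is $u$-torsion. For $U = M_{c,c'}$, writing an arbitrary $p \in A$ in the normal form $p = \sum_{i,j} a_{ij} u^i Z^j$ coming from $Zu = q^2 uZ$ (a finite sum), one computes
\[
u^k p \cdot x^{c'_l} \;=\; \sum_{i,j} a_{ij}\, q^{2 c'_l j}\, x^{k+i+c'_l},
\]
and for $k$ larger than $c_l - c'_l - i$ for every $(i,j)$ with $a_{ij}\neq 0$ and every $l$, the right-hand side lies in $x^{c_l}\CN[x]$. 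Hence $u^k p \in M_{c,c'}$. The cases $M_{c',c}$, $A_c$, $A_{c'}$ are identical. Since $A$ itself is $u$-torsion-free (it has the basis $\{u^i Z^j\}_{i,j \in \ZN}$), tensoring $0 \to U \to A \to A/U \to 0$ with the flat $\CN[u]$-module $\CN[u, u^{-1}]$ produces the isomorphism $\CN[u, u^{-1}] \otimes_{\CN[u]} U \cong A$ for each $U$.

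The second step is to base change $\phi \colon M_{c,c'} \otimes_{A_{c'}} M_{c',c} \to A_c$ along $\CN[u] \hookrightarrow \CN[u, u^{-1}]$. Associativity of tensor products together with $A \cong \CN[u, u^{-1}] \otimes_{\CN[u]} A_{c'}$ yields identifications
\[
\CN[u, u^{-1}] \otimes_{\CN[u]} (M_{c,c'} \otimes_{A_{c'}} M_{c',c}) \;\cong\; A \otimes_{A_{c'}} M_{c',c} \;\cong\; \CN[u, u^{-1}] \otimes_{\CN[u]} M_{c',c} \;\cong\; A,
\]
and under these the base-changed map $\tilde\phi$ is the identity of $A$, hence an isomorphism. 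Exactly as in the classical case, combining this with the flatness of the localization (using either $u$-torsion-freeness of the tensor product, or the fact that for a Morita context only surjectivity of the pairings is needed) forces $\phi$ itself to be an isomorphism onto $A_c$. The same argument with the roles of $c$ and $c'$ swapped handles $\psi$, and surjectivity of both pairings is the standard criterion for a Morita equivalence.

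The only subtle point is the density claim in the first step, namely that left multiplication by a sufficiently high power of $u$ pushes an arbitrary element of $A$ into each $U$; once this is in hand, everything is formal from flat base change and proceeds exactly as in Lemma~\ref{LemMoritaEquivalence}.
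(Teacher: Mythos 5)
Your proposal is correct and follows essentially the same route as the paper, whose proof of Lemma~\ref{LemQMoritaEquivalence} is simply the argument of Lemma~\ref{LemMoritaEquivalence} transported to the $q$-setting: localize along the variable that equals $x$ (here $u$ rather than $v$), check that $A/U$ is $u$-torsion via the normal form $\sum a_{ij}u^iZ^j$, and conclude by flat base change that the pairing becomes the identity of $A$. Your closing remark about using either $u$-torsion-freeness of the tensor product or the surjectivity criterion for Morita contexts is exactly the same caveat the paper itself makes, so nothing is missing relative to the paper's own level of detail.
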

\begin{lem}
\label{LemDescriptionOfRj}
We have $M_{c,c'}=\plusl_{j\in\ZN} x^j R_j(Z)\CN[Z,Z^{-1}]$, where $R_j(Z)$ is a monic polynomial with the following set of roots: we start with an empty set and for all $i=1,\ldots,n$ such that $c'_i\le c_i-j-1$ we add $q^{2c'_i},q^{2c'_i+2},\ldots,q^{2c_i-2j-2}$.
\end{lem}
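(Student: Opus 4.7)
The plan is to mimic the argument used for the filtered case (part (1) of Lemma~\ref{LemHowMCCPrimeLooks}), adapted to the $q$-setting. First I would decompose the ambient algebra $A$ by weights under conjugation by $Z$: every element can be written uniquely as $\sum_{j\in\ZN} x^j P_j(Z)$ with $P_j\in\CN[Z,Z^{-1}]$. Since for $a,b\in\CN$, $m=\sum_j x^j P_j(Z)$ we have $m\cdot x^a = \sum_j x^{j+a} P_j(q^{2a})$, and the powers of $x$ appearing in different weight components are distinct, the condition ``$m$ sends $x^{c'_i}\CN[x]$ into $x^{c_i}\CN[x]$'' decouples into the same condition for each $x^j P_j(Z)$ separately. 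This immediately gives $M_{c,c'}=\plusl_{j\in\ZN} M_{c,c',j}$ with $M_{c,c',j}\subset x^j\CN[Z,Z^{-1}]$.

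Next, for fixed $j$, I would translate the membership condition on $x^j P_j(Z)$ into conditions on the Laurent polynomial $P_j$. For each $i$ and each integer $k\ge 0$ we have
\[x^j P_j(Z)\cdot x^{c'_i+k}= P_j(q^{2(c'_i+k)})\, x^{c'_i+k+j},\]
and this must lie in $x^{c_i}\CN[x]$. So either $c'_i+k+j\ge c_i$, or $P_j(q^{2(c'_i+k)})=0$. Equivalently, $P_j$ must vanish at $q^{2c'_i},q^{2(c'_i+1)},\ldots,q^{2(c_i-j-1)}$, and this list is nonempty precisely when $c'_i\le c_i-j-1$. These are exactly the roots that define $R_j(Z)$ in the statement.

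The key input needed to conclude is that all of these forced roots are simple and distinct. Since $c_i-c'_i\in\ZN$, the numbers $q^{2(c'_i+k)}$ for $k=0,1,\ldots,c_i-j-1$ lie in $q^{2c_i+2\ZN}$, and the genericity assumption that $c_i-c_{i'}\notin\ZN+\tfrac{\pi i}{\ln q}\ZN$ for $i\ne i'$ guarantees that the cosets $q^{2c_i+2\ZN}$ are mutually disjoint. Hence the zeros coming from different indices $i$ are genuinely distinct, $R_j(Z)$ is a well-defined monic polynomial with simple roots, and the divisibility condition on $P_j$ becomes the single condition $R_j(Z)\mid P_j(Z)$ in $\CN[Z,Z^{-1}]$. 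This yields $M_{c,c',j}=x^j R_j(Z)\CN[Z,Z^{-1}]$, proving the lemma. The only subtle point to watch is that $R_j$ should be viewed as an ordinary polynomial in $Z$ (it has no negative powers), so that the quotient $P_j/R_j$ indeed lies in $\CN[Z,Z^{-1}]$; this is automatic because $R_j$ is monic and its roots are all nonzero.
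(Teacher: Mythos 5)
Your argument is correct and is essentially the paper's own proof: write $m=\sum_j x^j P_j(Z)$, apply it to $x^{c'_i+k}$, and read off the forced vanishing $P_j(q^{2c'_i}),\ldots,P_j(q^{2c_i-2j-2})=0$, which by the genericity of $c$ amounts to divisibility by the monic polynomial $R_j$. Your write-up just spells out more explicitly the decoupling over weight components and the distinctness of the forced roots, which the paper leaves implicit.
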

\begin{proof}
Let $m$ be an element of $M=M_{c,c'}$, $m=\sum x^k P_k(D)$, where $P_k$ are Laurent polynomials. For all triples $i,k$ and $l\geq 0$ such that $c'_i+k+l< c_i$, we have $P_k(q^{2c'_i+2l})=0$. For fixed $i,k$ this is equivalent to $0\le l\le c_i-c'_i-1-k$. This gives $P_k(q^{2c'_i})=\cdots=P_k(q^{2c_i-2k-2})=0$. The lemma follows.
\end{proof}

Below we will write $q^{2c'_i},\ldots,q^{2c_i-2j-2}$ to mean $c_i-j-c'_i$ numbers when $c_i-j>c'_i$ and empty set otherwise.


Now we assume that $A_{c'}$ is isomorphic to $\ovl{A_c}$ and both maps $\rho$ are given by $u\mapsto av$, $v\mapsto bu$, $Z\mapsto Z^{-1}$, the same formula as in~\cite{K}. 

This isomorphism exists when 
\begin{equation}
\label{EqRhoInQCase}
abP_c(qZ)=abvu=\rho(uv)=\rho(P_{c'}(q^{-1}Z))=\ovl{P_{c'}}(q^{-1}Z^{-1}).
\end{equation} From this we deduce the following: $P_c(z_0)=0$ if and only if $P_{c'}(\ovl{z_0^{-1}})=0$. Hence for any $i$ from $1$ to $n$ there exists $j$ from $1$ to $n$ such that $q^{2c_i-1}=\ovl{q^{1-2c_j'}}$. It follows that \[q^{2c_i+\ovl{2c_j'}-2}=e^{2\ln q(c_i+\ovl{c'_j}-1)}=1,\] hence $c_i+\ovl{c'_j}-1$ is a multiple of $\frac{\pi i}{\ln q}$. Shifting $c_i$ by $\frac{\pi i}{\ln q}$ does not change $2c_i$,  so we can assume that $c_i+\ovl{c'_j}-1=0$. We can do this for all $i=1,\ldots,n$. Similarly to the previous section from $c_j+\ovl{c'_k}-1=0$ we deduce that $k=i$. Hence there exists an involution $\sigma$ such that \begin{equation}
\label{EqRealityCondition}
c_i+\ovl{c'_{\sigma(i)}}=1
\end{equation} for all $i$.

Note that for any complex number $s$ with $\abs{s}=1$ we have $\rho(sZ)=\ovl{s}\rho(Z)=s^{-1}Z^{-1}=(sZ)^{-1}$, hence we can change $Z$ to $sZ$ in both algebras $A_c$, $A_{c'}$.

Taking $a=q^{2i\lambda}$ shifts all $c_j$ by $-i\lambda$. Hence we can choose generator $Z$ such that $\sum c_i$ is real. Note that \begin{multline}
\label{EqSumCiIsInteger}
2\sum c_i=2\sum\Re c_i=\sum \Re c_i+\Re c_{\sigma(i)}=\\
\sum \Re (c_i-c_i')+\Re c'_i+\Re c_{\sigma(i)}=\sum(c_i-c_i'+1)
\end{multline} is an integer. We require that $\sum c_i$ is an integer, not half-integer, so that we don't obtain square root  of $Z$ in the proof of Lemma~\ref{LemQMIsomorphicToMRho} below. This means that the number of singletons $i=\sigma(i)$ with $\Re c_i$ half-integer is even. 

If we interchange $c$ and $c'$ in~\eqref{EqSumCiIsInteger} and add~\eqref{EqSumCiIsInteger} we get $2\sum c_i+2\sum c'_i=2n$, hence $\sum c_i+\sum c'_i=n$. Recall that $P_c(Z)=q^{m-\sum c_i}Z^m+\cdots+q^{\sum c_i-m}Z^{-m}$, $P_{c'}(Z)=q^{m-\sum c'_i}Z^n+\cdots+q^{m-\sum c'_i}$, where $n=2m$. Note that all coefficients of $Z^m$ and $Z^{-m}$ are real and $m-\sum c_i=\sum c'_i-m$. It follows that $P_c(Z)$ and $\ovl{P_{c'}}(Z^{-1})$ have the same leading coefficient. Comparing this with~\eqref{EqRhoInQCase} we deduce that $ab=1$. We want $u\in A_c,A_{c'}$ to be exactly $x$, so we will leave it at that and allow any $a,b$ such that $ab=1$. Then $a=\abs{a}e^{2\pi i s}$, $b=\abs{a}^{-1}e^{-2\pi i s}$, where $s\in [0,1)$. It follows from~\eqref{EqRhoInQCase} that $P_c(z)=\ovl{P_{c'}}(z^{-1})$.

On the other hand, any $a,b$ with $ab=1$ and parameters $c,c'$ such that for some involution $\sigma$ we have $c_i+\ovl{c'_{\sigma(i)}}=1$ will give an antilinear isomorphism $\rho$ between $A_c$ and $A_{c'}$ as above.

Recall that we have the notion of $\rho$-invariant sesquilinear form and Proposition~\ref{PropFormsAndTraces} says that sesquilinear $\rho$-invariant forms on $M$ are in one-to-one correspondence with $\rho^2$-twisted traces on $M\otimes_{A_{c'}} M_{\rho^{-1}}$. We use the same strategy as in Section~\ref{SecUsualUnitarizability} in order to describe $\rho$-invariant forms: prove that $M_{c,c',\rho^{-1}}$ is isomorphic to $M_{c',c}$ to get $\rho^2$-twisted traces on $M_{c,c'}\otimes_{A_{c'}}M_{c',c}\cong A_c$.

In order to check that two usual polynomials are equal to each other, it is enough to check that they have the same roots and the same leading coefficient. For Laurent polynomials we should also check that they have root or pole of the same order at zero. In order to deal with this, we will consider {\it balanced} Laurent polynomials, meaning they have the form $ax^N+\cdots+bx^{-N}$. It is possible for $R_j$ to have odd number of nonzero roots, so we allow $N$ to be a half-integer. We will construct an isomorphism between $M_{c,c'}[\sqrt{Z}]$ and $M_{c',c,\rho}[\sqrt{Z}]$ below and check that it gives an isomorphism between $M_{c,c'}$ and $M_{c',c,\rho}$.

From now on we shift all $R_j$ by a power of $\sqrt{Z}$ so that they become symmetric.

\begin{lem}
\label{LemQMIsomorphicToMRho}
\begin{enumerate}
\item
We have $M_{c,c'}\cong M_{c',c,\rho}$. Denote this isomorphism by $\phi$. We also get that $\phi$ is an isomorphism from $M_{c,c',\rho^{-1}}$ to $M_{c',c}$.
\item
  Let us write $\phi$ as $\phi(x^{j}R_j(Z)R(Z))=x^{-j}S_{-j}(Z)\ovl{R}(Z^{-1})$. Then $\frac{a^2S_{j+2}(q^2z)}{S_j(z)}$ is positive for all $z$ with $\abs{z}=1$.
\item
We can choose $\phi$ so that the polynomials $a^{-j}R_j(q^{-j}z)S_{-j}(q^j z)$ are real for all $j$.
\end{enumerate}

\end{lem}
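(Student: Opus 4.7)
The plan is to adapt Lemma~\ref{LemCCprimeRhoIsomorphic} from the filtered case to the $q$-setting, replacing ordinary polynomials in $z$ with balanced Laurent polynomials in $Z$ and shifts by $1$ with multiplication by $q^2$. I would define $\phi$ by the ansatz
\[\phi\big(x^j R_j(Z)R(Z)\big)=x^{-j}S_{-j}(Z)\ovl{R}(Z^{-1}),\]
where the $S_{-j}$ are balanced Laurent polynomials determined up to a single scalar $C_\phi$ by requiring $\phi$ to intertwine the bimodule structures. The $Z$-equivariance $\phi(Zm)=Z^{-1}\phi(m)$ and $\phi(mZ)=\phi(m)Z^{-1}$ is built into the ansatz, so the substantive content is the verification of $\phi(vm)=au\phi(m)$; this forces the roots of $S_{-j}$ to be the $q$-analogues of those appearing in Lemma~\ref{LemCCprimeRhoIsomorphic} (obtained by swapping the roles of $c$ and $c'$) and imposes a leading-coefficient identity that is a reformulation of \eqref{EqRhoInQCase}.

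To establish $\phi(vm)=au\phi(m)$ I would factor $R_j(Z)=R_{j+1}(Z)L_j(Z)$ and $S_{-j-1}(Z)=S_{-j}(Z)M_{-j-1}(Z)$ and reduce the identity to the statement that $M_{-j-1}(Z)\ovl{L_j}(Z^{-1})$ equals an explicit $q$-shift of $P_{c'}$ times $a$, up to a monomial factor in $Z$. The involution $\sigma$ with $c_i+\ovl{c'_{\sigma(i)}}=1$ gives $c_i-c'_i=c_{\sigma(i)}-c'_{\sigma(i)}$, which ensures the root sets of $\ovl{L_j}(Z^{-1})$ and $M_{-j-1}(Z)$ partition the roots of this shifted $P_{c'}$; the leading-coefficient identity uses $ab=1$ together with $\ovl{P_{c'}}(Z^{-1})=P_c(Z)$, both established in the discussion surrounding \eqref{EqRhoInQCase}. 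The remaining intertwining relations $\phi(um)=bv\phi(m)$, $\phi(mv)=\phi(m)au$, $\phi(mu)=\phi(m)bv$ follow by multiplying by $u$ or $v$ on the appropriate side, applying the already-verified identity, and invoking torsion-freeness of $M_{c,c'}$ over $A_c,A_{c'}$ inside the torsion-free algebra $A$. The second clause of~(1) is then obtained by twisting both sides of $M_{c,c'}\cong M_{c',c,\rho}$ by $\rho^{-1}$.

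For part~(2), the quotient $a^2 S_{j+2}(q^2z)/S_j(z)$ is a balanced Laurent polynomial whose roots, read off from the root description of $S_{-j}$, come in $\sigma$-pairs $\{w,1/\ovl{w}\}$; the factor $a^2$ absorbs the $a$-dependence in the leading coefficient coming from iterating the recursion $S_{-j-1}=S_{-j}M_{-j-1}$. A direct computation on $|z|=1$ (using $\ovl{z}=z^{-1}$) shows that each paired factor contributes a positive quantity, giving the positivity claim. For~(3), the same $\sigma$-pairing implies that the roots of $R_j(q^{-j}z)S_{-j}(q^j z)$ are symmetric under $w\mapsto 1/\ovl{w}$; an appropriate choice of $C_\phi$ normalizes the overall leading coefficient so that $a^{-j}$ times the product is real, from which reality on the unit circle follows.

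The main obstacle is the bookkeeping. Balanced Laurent polynomials here may have half-integer degree, and the proof relies essentially on $\sum c_i\in\ZN$, arranged in the normalization preceding the lemma, to avoid genuinely adjoining $\sqrt{Z}$. Verifying the partition of roots between $L_j$ and $M_{-j-1}$ in all cases, including singletons $i=\sigma(i)$ with $\Re c_i$ half-integer, requires a careful case analysis similar to but somewhat more delicate than the corresponding step in Lemma~\ref{LemCCprimeRhoIsomorphic}.
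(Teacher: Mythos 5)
Your construction of $\phi$ follows the paper's route, but there is a genuine gap in how you dispose of the remaining intertwining relations. After verifying a single relation for the left action (which in the $q$-setting reads $\phi(um)=av\,\phi(m)$, since here $\rho(u)=av$ — not $\phi(vm)=au\,\phi(m)$ as in the filtered case), you claim that all other relations "follow by multiplying by $u$ or $v$ on the appropriate side \ldots and invoking torsion-freeness." Torsion-freeness only transfers a relation to the other generator \emph{on the same side}: from $\phi(um)=u.\phi(m)$ one gets $u.\phi(vm)=\phi(uvm)=\phi(P_c(Z)m)=P_c(Z).\phi(m)=uv.\phi(m)$ and hence $\phi(vm)=v.\phi(m)$ after cancelling $u$. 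It cannot produce the right-module relations from the left ones: $mu=x^{j+1}R_j(q^2Z)R(q^2Z)$ is in general not of the form $um'$ with $m'\in M_{c,c'}$ (because $R_j(q^2Z)$ is not divisible by $R_j(Z)$), and any cancellation argument for a right relation needs an already-established right relation to start from. The paper therefore verifies the right $u$-relation directly, i.e.\ the identity $\ovl{A_j}(Z^{-1})B_{-j-1}(Z)=aP_c(q^{-1}Z)$ of~\eqref{EqABequalsPQ}, and this is where the substantive work lies: the scalars in the $S_{-j}$ are already pinned down (up to one overall constant) by the left relation, so the leading coefficients must be shown to match automatically; the paper does this via the product of three limits and the count $\deg S_{-j}-\deg R_j=\sum c'_i-\sum c_i+2mj$. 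Your proposal omits this verification entirely, and the analogous two-sided check is also present in Lemma~\ref{LemCCprimeRhoIsomorphic}, so "adapting" it should have produced it.

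Your positivity argument for part (2) also does not work as stated. A factor of a balanced Laurent polynomial attached to a $\sigma$-pair of roots $\{w,\ovl{w}^{-1}\}$ is, after suitable normalization, real on $\abs{z}=1$ but not positive: on the unit circle $(z-w)(z-\ovl{w}^{-1})=-z\ovl{w}^{-1}\abs{z-w}^2$, so the sign depends on the normalization and the pairing alone cannot give positivity. The paper instead compares $R_{j+2}(q^{-j-2}z)S_{-j-2}(q^{j+2}z)$ with $R_j(q^{-j}z)S_{-j}(q^{j}z)$, identifies the numerator of the ratio with $a^2P_{c'}(q^{1-j}z)P_c(q^{j-1}z)$ using~\eqref{EqLMEqualsPQ} and~\eqref{EqABequalsPQ}, and uses $P_{c'}(z)=\ovl{P_c}(z^{-1})$ so that this product is $\abs{P_c}^2\geq 0$ on the circle, the denominator being manifestly nonnegative. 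Likewise in part (3) a single constant $C_{\phi}$ must work for \emph{all} $j$ simultaneously; this requires the inductive comparison of $a^{-j-1}R_{j+1}(z)S_{-j-1}(z)$ with $a^{-j}R_j(z)S_{-j}(z)$, whose ratio is $P_{c'}(q^{-1-2j}z)/\bigl(L_j(z)\ovl{L_j}(z^{-1})\bigr)$, not merely a one-shot normalization of the leading coefficient.
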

\begin{proof}
The second statement follows from the first after twisting the action from both sides by $\rho^{-1}$.

We will use $.$ to denote the action of $A_c,A_{c'}$ on $M_{c',c,\rho}$, so that $Z.m=Z^{-1}m, u.m=avm$ and so on.


We will construct the map $\phi$ similarly to the proof of Lemma~\ref{LemCCprimeRhoIsomorphic}. Let $\phi(x^jR_j(Z)R(Z))=x^{-j}S_{-j}(Z)\ovl{R}(Z^{-1})$, where $S_{-j}$ has symmetric degree and has roots similar to the roots of $R_j$ but is not necessarily monic.

Note that $\phi$ is antilinear and satisfies $\phi(mZ)=\phi(m)Z^{-1}$. We also have $\phi(ZmZ^{-1})=Z^{-1}\phi(m)Z$. It follows that $\phi(Zm)=Z^{-1}\phi(m)$.

Since elements $x^j R_j(Z)$ form a $\CN[Z,Z^{-1}]$ basis of $M_{c,c'}$ both for the left and for the right action of $\CN[Z,Z^{-1}]$, it is enough to prove $\phi(um)=u.\phi(m)$ for $m=x^j R_j(Z)$ and similarly for the other conditions.

We want to prove that $\phi(um)=u.\phi(m)=avm$ for $m=x^j R_j(Z)$. We have $um=xm=x^{j+1}R_j(Z)$. Denote $\frac{R_j(Z)}{R_{j+1}(Z)}$ by $L_j(Z)$. We get $\phi(um)=x^{-j-1}S_{-j-1}(Z)\ovl{L_j}(Z^{-1})$.

By definition \begin{multline*}
 av\phi(m)=ax^{-1}P_{c'}(q^{-1}Z)\phi(m)=ax^{-1}P_{c'}(q^{-1}Z)x^{-j}S_{-j}(Z)=\\
 ax^{-j-1}P_{c'}(q^{-1-2j}Z)S_{-j}(Z).
 \end{multline*}

Denote $\frac{S_{-j-1}(Z)}{S_{-j}(Z)}$ by $M_{-j-1}(Z)$. It follows that \[\phi(um)=x^{-j-1}M_{-j-1}(Z)\ovl{L_j}(Z^{-1})S_{-j}(Z).\] 

So it is enough to prove that 
\begin{equation}
\label{EqLMEqualsPQ}
aP_{c'}(q^{-1-2j}Z)=M_{-j-1}(Z)\ovl{L_j}(Z^{-1}).
\end{equation} Note that $P_{c'}, M_{-j-1}$ and $L_j$ are all balanced. Hence it is enough to check that both sides have the same roots and choose $S_j$ such that both sides have the same leading coefficient.

Similarly to Lemma~\ref{LemCCprimeRhoIsomorphic} we see that $L_j(Z)$ has roots $q^{2c_i-2j-2}$ for all $i$ satisfying $c_i-j-1\geq c_i'$. Hence $\ovl{L_j}(Z^{-1})$ has roots $q^{2+2j-2\ovl{c_i}}=q^{2j+2c_\sigma(i)'}$, where $\sigma$ is the permutation such that $\ovl{c_i}+c'_{\sigma(i)}=1$ for all $i$. Similarly to the proof of Lemma~\ref{LemCCprimeRhoIsomorphic} we see that roots of $L_j$ come in pairs $q^{2c_i-2j-2}$, $q^{2c_{\sigma(i)}-2j-2}$.

Similarly to the above and to the proof of Lemma~\ref{LemCCprimeRhoIsomorphic}, we see that $M_{-j-1}(Z)$ is balanced and has roots $q^{2c'_i+2j}$ for all $i$ satisfying $c'_i+j\geq c_i$, so that $\ovl{L_j}(Z^{-1})M_{-j-1}(Z)$ has roots $q^{2c'_1+2j},\ldots,q^{2c'_n+2j}$. Polynomial $P_{c'}(q^{-1-2j}Z)$ also has roots $q^{2c'_1+2j},\ldots,q^{2c'_n+2j}$. It follows that $P_{c'}(q^{-1-2j}Z)$ and $\ovl{L_j}(Z^{-1})M_{-j-1}(Z)$ have the same set of roots. Multiplying each $S_k$ by its own nonzero constant we can make $aP_{c'}(q^{-1-2j}Z)$ equal to $\ovl{L_j}(Z^{-1})M_{-j-1}(Z)$ for all $j$.

Hence $\phi$ satisfies $\phi(um)=u.\phi(m)$.


It remains to check that $\phi(mu)=\phi(m).u=a\phi(m)v$:  we claim that $\phi(vm)=v.\phi(m)$ and $\phi(mv)=\phi(m).v$ follow from all the other conditions. Indeed, since $M_{c',c}$ is a torsion-free $A_{c'}$-module, it is enough to check that $u.\phi(vm)=uv.\phi(m)$. We have \[u.\phi(vm)=\phi(uvm)=\phi(P_c(Z)m)=P_c(Z).\phi(m)=uv.\phi(m).\] Similarly $\phi(mv)=\phi(m).v$ follows from $\phi(mv).u=\phi(mvu)$.

Hence it remains to prove that $\phi(mu)=a\phi(m)v$. As before we can check this for $m=x^j R_j(Z)$. We have \[mu=x^j R_j(Z)x=x^{j+1}R_j(q^2Z)=x^{j+1}R_{j+1}(Z)A_j(Z).\] Here, similarly to Lemma~\ref{LemCCprimeRhoIsomorphic} $A_j(Z)=\frac{R_{j}(q^2Z)}{R_{j+1}(Z)}$ is a polynomial that has roots $q^{2c'_i-2}$ for all $i$ such that $c'_i\le c_i-j-1$. 

We have $\phi(mu)=x^{-j-1}S_{-j-1}(Z)\ovl{A_j}(Z^{-1})$. Here $\ovl{A_j}(Z^{-1})$ is a polynomial in $Z^{-1}$ that has roots $Z=\ovl{(q^{2c'_i-2})^{-1}}=q^{2-2\ovl{c'_i}}=q^{2c_{\sigma(i)}}$.

We have \begin{multline*}
a\phi(m)v=ax^{-j} S_{-j}(Z)x^{-1}P_c(q^{-1}Z)=ax^{-j-1}S_{-j}(q^{-2}Z)P_c(q^{-1}Z)=\\
ax^{-j-1}S_{-j-1}(Z)\frac{P_c(q^{-1}Z)}{B_{-j-1}(Z)},
\end{multline*} where $B_{-j-1}(Z)=\frac{S_{-j-1}(Z)}{S_{-j}(q^{-2}Z)}$ has roots $q^{2c_i}$ for all $i$ such that $c_i\le c'_i+j$. Hence we should prove that
\begin{equation}
\label{EqABequalsPQ}
\ovl{A_j}(Z^{-1})B_{-j-1}(Z)=aP_c(q^{-1}Z).
\end{equation}

Similarly to the proof of Lemma~\ref{LemCCprimeRhoIsomorphic} and to the reasoning above we deduce that $\ovl{A_j}(Z^{-1})B_{-j-1}(Z)$ and $aP_c(q^{-1}Z)$ have the same set of roots. Since $A_j, B_{-j-1}$ and $P_c$ are all symmetric, it is enough to check that they have the same leading coefficient. In other words, \[\lim_{Z\to \infty}\frac{\ovl{A_j}(Z^{-1})B_{-j-1}(Z)}{aP_c(q^{-1}Z)}\] should be equal to one. We compute this limit as a product
\[\lim_{Z\to 0}\frac{\ovl{A_j}(Z)}{\ovl{L_j}(Z)}\lim_{Z\to\infty}\frac{B_{-j-1}(Z)}{M_{-j-1}(Z)}
\lim_{Z\to\infty}\frac{P_{c'}(q^{-1-2j}Z)}{P_c(q^{-1}Z)},\] since $\ovl{L_j}(Z^{-1})M_{-j-1}(Z)=aP_{c'}(q^{-1-2j}Z)$.

We have $L_j(Z)=\frac{R_j(Z)}{R_{j+1}(Z)}$, $A_j(z)=\frac{R_{j}(q^2Z)}{R_{j+1}(Z)}$. Hence \[\lim_{Z\to 0}\frac{\ovl{A_j}(Z)}{\ovl{L_j}(Z)}=q^{-\deg R_j}.\]

We have $M_{-j-1}(Z)=\frac{S_{-j-1}(Z)}{S_{-j}(Z)}$, $B_{-j-1}(Z)=\frac{S_{-j-1}(Z)}{S_{-j}(q^{-2}Z)}$, hence the leading coefficient of \[\lim_{Z\to\infty}\frac{B_{-j-1}(Z)}{M_{-j-1}(Z)}=q^{\deg S_{-j}}.\]

Recall that \[P_c(Z)=q^{-\sum c_i+m}Z^m+\cdots+q^{\sum c_i-m}Z^{-m},\] \[P_{c'}(Z)=q^{-\sum c'_i+m}Z^{-m}+\cdots+q^{\sum c'_i-m}Z^{-m},\] hence \[\lim_{Z\to\infty}\frac{P_{c'}(q^{-1-2j}Z)}{P_c(q^{-1}Z)}=q^{-2mj+\sum c_i-\sum c'_i}.\]

It follows that
\begin{equation}
\label{EqProductOfThreeLimits}
\lim_{Z\to 0}\frac{\ovl{A_j}(Z)}{\ovl{L_j}(Z)}\lim_{Z\to\infty}\frac{B_{-j-1}(Z)}{M_{-j-1}(Z)}
\lim_{Z\to\infty}\frac{P_{c'}(q^{-1-2j}Z)}{P_c(q^{-1}Z)}=q^{\deg S_{-j}-\deg R_j-2mj+\sum c_i-\sum c'_i}.
\end{equation}

Note that each $i$ adds exactly $c'_i+j-c_i$ to $\deg S_{-j}-\deg R_j$: either $S_{-j}$ has roots $c_i,\ldots,c'_i+j-1$ or $R_j$ has roots $c'_i,\ldots,c_i-j-1$. Hence $\deg S_{-j}-\deg R_j=\sum c'_i-\sum c_i+2mj$. It follows that the right-hand side in~\eqref{EqProductOfThreeLimits} equals to one. We deduce that $\ovl{A_j}(Z^{-1})B_{-j-1}(Z)$ and $aP_c(q^{-1}Z)$ have the same set of roots, hence they coincide.

It follows that $\phi$ is an isomorphism of $A_c-A_{c'}$ bimodules. It remains to check that it sends $M_{c,c'}\subset M_{c,c'}[\sqrt{Z}]$ to $M_{c',c,\rho}\subset M_{c',c,\rho}[\sqrt{Z}]$. For each $j$ the polynomial $R_j$ has either all integer monomials or all half-integer ones and similarly for $S_j$. So we have to check that either both $R_j$ and $S_{-j}$ have integer monomials or both have half-integer monomials. This depends on the number of roots: integer degrees for even number of roots, half-integer for odd. Hence $R_j$ and $S_{-j}$ should have the same parity of number of roots.

We will prove by downward induction on $j$ that $R_j$ and $S_{-j}$ have the same parity of number of roots. The base case is for $j$ large enough. In this case $R_j=1$ has no roots and $S_{-j}$ has roots $c_i,\ldots,c'_i+j-1$ for all $i$. Hence there are $\sum c'_i-\sum c_i+nj$ roots of $S_{-j}$. It has the same parity as $\sum c'_i-\sum c_i$. It follows from~\eqref{EqSumCiIsInteger} that $\sum c'_i-\sum c_i$ has the same parity as $2\sum c_i$. By our assumption $2\sum c_i$ is even, hence both $R_j$ and $S_{-j}$ have even number of roots.

Induction step is $j+1\to j$. We have $R_j(Z)=L_j(z)R_{j+1}(Z)$, $S_{-j}(Z)=\frac{S_{-j-1}(Z)}{M_{-j-1}(Z)}$. We should prove that $L_j$ and $M_{-j-1}$ have the same parity of number of roots. Recall that $\ovl{L_j}(Z^{-1})M_{-j-1}(Z)=P_{c'}(q^{-1-2j}Z)$. The Laurent polynomial $P_{c'}(Z)$ has even number of roots, this proves the induction step.

We turn to the second claim. We have \begin{multline*}
\frac{R_{j+2}(q^{-j-2}z)S_{-j-2}(q^{j+2}z)}{R_j(q^{-j}z)S_{-j}(q^j z)}=\\
\frac{R_{j+1}(q^{-j-2}z)S_{-j-1}(q^{j+2}z)}{R_j(q^{-j}z)S_{-j}(q^j z)}\frac{M_{-j-2}(q^{j+2}z)}{L_{j+1}(q^{-j-2}z)}=\\
\frac{B_{-j-1}(q^{j+2}z)M_{-j-2}(q^{j+2}z)}{A_j(q^{-j-2}z)L_{j+1}(q^{-j-2}z)}=\\
\frac{B_{-j-1}(q^{j+2}z)\ovl{A_j}(q^{-j-2}z^{-1})M_{-j-2}(q^{j+2}z)\ovl{L_{j+1}}(q^{-j-2}z^{-1})}{A_j(q^{-j-2}z)\ovl{A_j}(q^{-j-2}z^{-1})L_{j+1}(q^{-j-2}z)\ovl{L_{j+1}}(q^{-j-2}z^{-1})}
\end{multline*}
The denominator of this fraction is nonnegative and it follows from~\eqref{EqABequalsPQ} and~\eqref{EqLMEqualsPQ} that the numerator is $a^2P_{c'}(q^{1-j}z)P_c(q^{j-1}z)$. Since $P_{c'}(z)=\ovl{P_c}(z^{-1})$, the polynomial $P_{c'}(q^{1-j}z)P_c(q^{j-1}z)$ is nonnegative when $\abs{z}=1$.

It remains to prove the third claim. Similarly to the proof of Lemma~\ref{LemHowRSBehaves} we get that the roots of $R_j(q^{-j}z)S_{-j}(q^j z)$ are symmetric with respect to $z\mapsto \ovl{z}^{-1}$. Since $a^{-j}R_j(q^{-j}z)S_{-j}(q^j z)$ is balanced it is enough to check that its leading coefficient and its negative leading coefficient are complex conjugate. Hence it is enough to check that the leading coefficient and negative leading coefficient of $a^{-j}R_j(z)S_{-j}(z)$ have opposite arguments. We can prove this using induction. The base case $j=0$ is a choice of $\phi$. The induction step is
\begin{multline*}\frac{a^{-j-1}R_{j+1}(z)S_{-j-1}(z)}{a^{-j}R_j(z)S_{-j}(z)}=a^{-1}\frac{M_{-j-1}(z)}{L_j(z)}=\\
a^{-1}\frac{M_{-j-1}(z)\ovl{L_j}(z^{-1})}{L_j(z)\ovl{L_j}(z^{-1})}=\frac{P_{c'}(q^{-1-2j}z)}{L_j(z)\ovl{L_j}(z^{-1})}
\end{multline*}
Here the numerator's leading and negative leading coefficients are positive and the denominator is positive when $\abs{z}=1$, hence its leading and negative leading coefficients are complex conjugate.

\end{proof}



\subsection{Positivity condition for the Hermitian form}

The invariant Hermitian form $(\cdot,\cdot)$ is given by $(u,v)=T(u\phi(v))$, where $T$ is $g_t=\rho^2$-twisted. Here $t=ba^{-1}$. For fixed $t$ there are two conjugations $\rho$ with $\rho^2=g_t$. We denote one of them by $\rho_+$ and another by $\rho_-$.

It is enough to check that $(\cdot,\cdot)$ is positive definite on each space $Z^j \CN[Z,Z^{-1}]$. Let $m=x^j R_j(Z) R(Z)$. Then 
\begin{multline*}
(m,m)=T(x^j R_j(Z) R(Z)x^{-j}S_{-j}(Z)\ovl{R}(Z^{-1}))=\\
T(R_j(q^{-2j}Z)R(q^{-2j}Z)S_{-j}(Z)\ovl{R}(Z^{-1})).
\end{multline*}
Let $t=\abs{a} e^{2\pi i s}$. Similarly to the previous section, any trace  $T$ can be written as $T(R)=\int_C w(t)R(t)dt$, where $w$ is a certain quasi-periodic function. More precisely, $w(q^2x)=tw(x)$ and $w(qx)P(x)$ has no poles between $q^{-1}C$ and $qC$. In other words, $q^{2c_i}q^{2\ZN_{\geq 0}}$ is inside $C$ and $q^{2c_i}q^{2\ZN_{<0}}$ is outside of $C$.

We say that $i$ is bad if for a quasi-periodic function $w$ that has simple poles at $q^{2c_i+1+2\ZN}$ there exists $j$ such that $R_j(q^{-2j}x)S_{-j}(x)w(x)$ has poles between $C$ and $q^j S^1$, including the circle.

\begin{lem}
An index $i$ is bad if and only if $\Re c_i+\Re c'_i\le 0$ or $\Re c_i+\Re c'_i\geq 2$.
\end{lem}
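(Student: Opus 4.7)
The plan is to mirror the proof of Lemma~\ref{LemDescriptionOfBadI} from the filtered case, replacing the additive translation $c_i + k$ by the multiplicative position $q^{2c_i + 2k}$ and the vertical line $\tfrac{j}{2}+i\RN$ by the circle $q^j S^1$. Since the combinatorial structure of roots, poles, and ``which side of the contour'' is identical under this correspondence, most of the filtered argument transplants verbatim; only the translation of inequalities from exponents to moduli is new.

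First I would catalog the zeros of $R_j(q^{-2j}z)S_{-j}(z)$ along the orbit associated to index $i$. From the given formula for $R_j$, the zeros of $R_j(q^{-2j}z)$ on this orbit are $q^{2c'_i + 2j},\ldots,q^{2c_i - 2}$ when $c_i - c'_i - j > 0$, and empty otherwise. By an argument parallel to Lemma~\ref{LemCCprimeRhoIsomorphic} (reading off the roots of $S_{-j}$ from the proof there), the zeros of $S_{-j}(z)$ on the same orbit are $q^{2c_i},\ldots,q^{2c'_i + 2j - 2}$ when $c'_i + j - c_i > 0$, and empty otherwise. Exactly one of the two alternatives is nonempty, and together the zeros form the multiplicative analogue of the root set appearing in Lemma~\ref{LemDescriptionOfBadI}.

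Next, assuming $0 < q < 1$ (the other case is symmetric), I translate the geometric condition. The contour $C$ separates $|z|$ slightly larger than $q^{2\Re c_i}$ from $|z|$ slightly smaller, while $q^j S^1$ is the circle $|z|=q^j$. An uncancelled simple pole of $R_j(q^{-2j}z)S_{-j}(z)w(z)$ at a point of the pole orbit with exponent $2\Re c_i + 2k$ (after aligning the pole orbit of $w$ with the zero orbit of $R_j S_{-j}$ via the fixed offset recorded in the lemma statement) lies in the annulus between $C$ and $q^j S^1$ precisely when the exponent lies in the interval determined by $q^j$ and the modulus of $C$. This splits into two mirror sub-cases depending on whether the pole sits inside $C$ and outside $q^j S^1$, or vice versa.

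In each sub-case I follow the filtered calculation word for word. Non-cancellation imposes one inequality on $j,k$ (the pole position is not in the zero catalog above), and ``between $C$ and $q^j S^1$'' imposes another. Combining, existence of an integer $j$ reduces, as in the filtered proof, to the borderline cases $k=1$ and $k=0$, which yield respectively $\Re c_i + \Re c'_i \ge 2$ and $\Re c_i + \Re c'_i \le 0$. Conversely, when either inequality holds, the same explicit $k$ produces a concrete $j$ for which a simple pole of $R_j(q^{-2j}z)S_{-j}(z)w(z)$ survives in the desired annulus, so $i$ is bad. The main obstacle is purely bookkeeping: aligning the orbit of poles of $w$ with the orbit of zeros of $R_j(q^{-2j}z)S_{-j}(z)$ via the correct multiplicative shift, and correctly tracking which side of $C$ the annulus $q^j S^1$ lies on as $j$ changes sign. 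No new analytic input beyond Lemma~\ref{LemDescriptionOfBadI} is required.
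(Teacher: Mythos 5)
Your proposal is correct and is exactly the route the paper intends: the paper's proof of this lemma is literally the one-line remark that it is ``similar to the proof of Lemma~\ref{LemDescriptionOfBadI}'', and your transplant (exponents doubled via $x\mapsto q^{2x}$, the line $\tfrac{j}{2}+i\RN$ replaced by $q^jS^1$, the same root catalogs for $R_j$ and $S_{-j}$, and the same $k=1$/$k=0$ borderline analysis yielding $\Re c_i+\Re c'_i\geq 2$ and $\Re c_i+\Re c'_i\le 0$) is precisely how that analogy is carried out. Your remark about aligning the pole orbit of $w$ with the zero orbit of $R_j(q^{-2j}z)S_{-j}(z)$ is the right thing to watch, since the paper's stated pole set $q^{2c_i+1+2\ZN}$ versus the pole at $q^{2c_i}$ used later reflects a harmless convention shift.
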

\begin{proof}
The proof is similar to the proof of Lemma~\ref{LemDescriptionOfBadI}.
\end{proof}
\begin{prop}
\label{PropQBadPolesNoForm}
Suppose that there exists a bad $i$ such that $w$ has a pole at $q^{2c_i}$. Then $w$ does not give a positive definite form on $M_{c,c'}$.
\end{prop}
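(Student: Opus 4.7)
The plan is to mimic the strategy of Proposition~\ref{PropBadPolesNoForm}, replacing the vertical line $i\RN+\tfrac{j}{2}$ with the circle $q^j S^1$ and using density of Laurent polynomials in $L^2$ of the circle in place of density of polynomials on the real line. Assume for contradiction that $w$ gives a positive definite form. By hypothesis $i$ is bad, so fix $j$ bad for $i$: the meromorphic function $R_j(q^{-2j}z)S_{-j}(z)w(z)$ has poles in the closed annular region between $C$ and $q^j S^1$. Using the computation preceding the proposition,
\[
\big(x^j R_j(Z)R(Z),\, x^j R_j(Z)R(Z)\big)
= \intl_C R_j(q^{-2j}t)\,S_{-j}(t)\,R(q^{-2j}t)\,\ovl{R}(t^{-1})\,w(t)\,dt.
\]

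Next I would choose a Laurent polynomial $S(z)$ whose zeroes kill every pole of $R_j(q^{-2j}z)S_{-j}(z)w(z)$ lying strictly between $C$ and $q^j S^1$, and which vanishes at the poles on $q^j S^1$ itself to enough order that the integrand along $q^j S^1$ is well-defined. Replacing $R(z)$ by $R(z)\,S(q^{2j}z)$ (so that the factor $S(t)\ovl{S}(t^{-1})$ appears in the integrand on the $t$-contour), Cauchy's theorem lets us deform $C$ to $q^j S^1$:
\[
\intl_{q^j S^1} R_j(q^{-2j}t)\,S_{-j}(t)\,R(q^{-2j}t)\,\ovl{R}(t^{-1})\,S(t)\,\ovl{S}(t^{-1})\,w(t)\,dt > 0
\]
for every nonzero Laurent polynomial $R$. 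After multiplying $w$ by a suitable unit modulus constant, $dt$ along $q^j S^1$ becomes (a real multiple of) the arclength measure $|dt|$. Now I would invoke the analogue of Lemma~\ref{LemClosureOfPolynomials} for $L^2$ of a circle with a positive weight (Laurent polynomials are dense), applied with weight $R_j(q^{-2j}t)S_{-j}(t)w(t)$ and with $M(t)=S(t)\ovl{S}(t^{-1})$, to conclude that $R_j(q^{-2j}t)S_{-j}(t)w(t)$ is almost everywhere nonnegative on $q^j S^1$. Since $w$ has only simple poles and a nonnegative function can only have poles of even order, this forces $R_j(q^{-2j}t)S_{-j}(t)w(t)$ to be regular on $q^j S^1$ itself.

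Since $i$ is bad and $j$ is bad for $i$, after removing the poles on $q^j S^1$ there must still be at least one pole of $R_j(q^{-2j}t)S_{-j}(t)w(t)$ strictly between $C$ and $q^j S^1$. Hence on $\CN[Z,Z^{-1}]$,
\[
T\bigl(R(Z)\,R_j(q^{-2j}Z)\,S_{-j}(Z)\bigr)
= \intl_{q^j S^1} R(t)\,R_j(q^{-2j}t)\,S_{-j}(t)\,w(t)\,dt + \Phi(R),
\]
where $\Phi(R)=\sum a_\ell R(t_\ell)$ is a nonzero finite linear combination of evaluations at the interior poles, none of which lie on $q^j S^1$. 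The final step is to prove a $q$-analogue of the auxiliary lemma inside the proof of Proposition~\ref{PropBadPolesNoForm}: for an almost everywhere nonnegative weight $\omega$ on $q^j S^1$ and a nonzero evaluation functional $\Phi$ supported off the circle, there exists a Laurent polynomial $R$ with $\int \omega R\ovl{R}\,|dt|+\Phi(R\ovl{R})\notin\RN_{\ge 0}$. The argument is the same as in the filtered case: pick a Laurent polynomial $S_0$ with $\ovl{S_0}(t^{-1})=S_0(t)$ that vanishes at every $t_\ell$, use $L^2$ density of $S_0\cdot\CN[Z,Z^{-1}]$ to approximate an arbitrary $R$, and then choose evaluations at a conjugate pair $t_1,\ovl{t_1}^{\,-1}$ so that $\Phi(R\ovl{R})$ is not a nonnegative real. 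This contradicts positivity.

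The main obstacle is technical bookkeeping around the contour deformation: one must ensure the chosen $S$ really does cancel every pole lying strictly between $C$ and $q^j S^1$ while being compatible with the $\ad Z$-eigenspace grading, and one must prove (or invoke) the circle version of Lemma~\ref{LemClosureOfPolynomials}(2), verifying that the standard density result for trigonometric polynomials in $L^p$ of the circle gives what is needed after twisting by a Laurent polynomial with finitely many zeroes. Once these two technical points are in place, the rest of the argument is a direct transcription of the filtered-case proof.
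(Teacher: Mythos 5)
Your proposal is correct and follows essentially the same route as the paper's proof, which likewise deduces that $w$ has no poles on $q^jS^1$ and then writes $T$ as the circle integral plus a nonzero evaluation functional $\Phi$ at the interior poles, deriving a contradiction with positivity. The only notable difference is cosmetic: the paper handles the final approximation with Stone--Weierstrass on the compact circle (taking $R=L_0S+L_1$ uniformly small) rather than transplanting the weighted $L^2$-density lemma, and in your bookkeeping the relevant reflection on $q^jS^1$ pairs a pole $t_\ell$ with $q^{2j}\ovl{t_\ell}^{-1}$ (and the extra factor is $S(t)\ovl{S}(q^{2j}t^{-1})$), a point you already flagged as routine.
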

\begin{proof}
Suppose that $T$ defined by $w$ gives a positive definite form on $M_{c,c'}$. Let $j$ be a number that is bad for $i$.

Similarly to the proof of Proposition~\ref{PropBadPolesNoForm} we deduce that $w$ has no poles on $q^j S^1$ and write 
\begin{multline*}T\big(R_j(q^{-2j}Z)R(q^{-2j}Z)S_{-j}(Z)\ovl{R}(Z^{-1})\big)=\\
\int_{q^j S^1} R(q^{-2j}z)\ovl{R}(z^{-1})w(z)dz+\Phi\big(R(q^{-2j}z)\ovl{R}(z^{-1})\big),
\end{multline*} where $\Phi(S)$ is a nonzero  linear functional of the form $\Phi(S)=\sum c_i S(z_i)$. Here $z_i$ are the poles of $w$ between the contour $C$ and the circle $q^j S^1$. 

Let $L_0$ be a polynomial that has no zeroes on $S^1$ such that \[\Phi(\ovl{L_0(z^{-1})}\CN[z])=\Phi(L_0(q^{-2j}z)\CN[z])=\{0\}.\] We find a polynomial $L_1$ such that \(\Phi(L_1(q^{-2j}z)\ovl{L_1}(z^{-1}))\) does not belong to $\RN_{\geq 0}$. We use the Stone-Weierstrass theorem to find a polynomial $S$ such that $L_0S+L_1$ is uniformly small on $q^jS^1$. It follows that for $R=L_0S+L_1$ we get a contradiction with positivity of $T$.
\end{proof}
\begin{thr}
\label{ThrQAnswer}
Assume that there are $2k>0$ good indices. Fix any $k$ distinct pairs of numbers $(z_k,\ovl{z_k}^{-1})$. Then the cone of positive traces is isomorphic to the cone of elliptic functions with simple poles at $z_k,\ovl{z_k}^{-1}$ that are positive on $q^j S^1$ for all integers $j$. In particular, this cone has dimension $2k$, it does not depend on $s$ and on the particular choice of $P$, only on the number of good indices.

In the case when there are no good indices, there is a unique positive trace up to scaling in the case when $\rho^2=\id$ for one of the choices $\rho_+$, $\rho_-$, and no positive traces otherwise
\end{thr}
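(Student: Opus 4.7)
The plan is to mimic the strategy of Subsection~SecUsualUnitarizability, adapted to the elliptic setting of $\CN^*/q^{2\ZN}$. First, I would apply Proposition~\ref{PropQBadPolesNoForm} to rule out any weight $w$ with a pole at $q^{2c_i}$ for a bad index $i$. Combined with the quasi-periodicity $w(q^2 z)=tw(z)$, this forces the pole set of $w$, viewed on the elliptic curve $E_q:=\CN^*/q^{2\ZN}$, to lie among the points $\{q^{2c_i}\}$ for good $i$. The involution $\sigma$ with $c_i+\ovl{c'_{\sigma(i)}}=1$ pairs the good indices, so these $2k$ poles group into $k$ pairs of the form $\{z_l,\ovl{z_l}^{-1}\}$, giving the pole divisor asserted in the theorem.

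Next I would prove the $q$-analog of Proposition~\ref{PropPositiveFormMeansPositiveWeight}. For $m=x^j R_j(Z)R(Z)$, we have
\[(m,m)=\intl_C R_j(q^{-2j}z)S_{-j}(z)R(q^{-2j}z)\ovl{R}(z^{-1})w(z)\,dz.\]
Once the bad poles have been excluded, the contour $C$ can be deformed to $q^j S^1$ without picking up residues. Applying Lemma~\ref{LemClosureOfPolynomials} on the circle (the Laurent polynomials $R(q^{-2j}z)\ovl{R}(z^{-1})$ are dense among nonnegative continuous functions on $q^jS^1$ in the appropriate weighted $L^2$), positivity of $(\cdot,\cdot)$ becomes equivalent to the pointwise inequality $R_j(q^{-2j}z)S_{-j}(z)w(z)\ge 0$ on $q^jS^1$ for every $j\in\ZN$.

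Third, I would exploit parts (2) and (3) of Lemma~\ref{LemQMIsomorphicToMRho} to analyze $R_j(q^{-2j}z)S_{-j}(z)$. Part (3) makes $a^{-j}R_j(q^{-2j}z)S_{-j}(z)$ real on $q^jS^1$, while part (2) (together with the parallel statement for the $R_j$) gives a positive ratio of consecutive $j$-strata up to the explicit factor $P_c(q^{\pm 1}z)P_{c'}(q^{\pm 1}z)$. The zeros of $R_j(q^{-2j}z)S_{-j}(z)$ on $q^jS^1$ coincide exactly with the poles of $w$ at good points on that circle, so their product has no real singularities and definite sign on $q^jS^1$. Folding the $j$-dependence through the quasi-periodicity of $w$, the conditions for all $j$ collapse — up to an explicit $\rho_\pm$-dependent sign cancelled by the choice of normalization — to the positivity of one genuinely elliptic function $E(z)$ on the two real circles $S^1$ and $qS^1$, which together form $E_q(\RN_{>0}\cdot S^1)$.

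Finally, I would identify $E$ as a meromorphic function on $E_q$ with simple poles at the images of $z_1,\ovl{z_1}^{-1},\ldots,z_k,\ovl{z_k}^{-1}$ and note that the map $w\mapsto E$ is a bijection onto this space. Since the space of such elliptic functions has dimension $2k$ by Riemann-Roch, and the positivity condition cuts out an open convex cone of full dimension in the real locus (which is itself $2k$-dimensional because the pole pairs are complex conjugate under $z\mapsto \ovl{z}^{-1}$), we obtain the claimed $2k$-dimensional cone, manifestly independent of $s$ and of the remaining free parameters in $P$. The main obstacle I anticipate is the bookkeeping in step three: tracking how the factors $a^j$, the two choices $\rho=\rho_\pm$, the parity of $\deg R_j+\deg S_{-j}$, and the quasi-period multiplier $t$ combine so that precisely one elliptic $E$ emerges and the dimension stays insensitive to these auxiliary data.
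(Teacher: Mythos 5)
Your steps one through three track the paper's argument closely: excluding weights with poles at bad indices via Proposition~\ref{PropQBadPolesNoForm}, deforming the contour to $q^jS^1$ and using density of (Laurent) polynomials to convert positivity of the form into the pointwise condition $R_j(q^{-2j}z)S_{-j}(z)w(z)\ge 0$ on $q^jS^1$, and then using parts (2) and (3) of Lemma~\ref{LemQMIsomorphicToMRho} together with $w(q^2z)=tw(z)$, $t=a^{-2}$, to collapse the infinitely many conditions to the two strata $j=0,1$. Up to that point your proposal and the paper coincide.

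The gap is in your final step. The paper does not deduce the result from Riemann--Roch; it reduces to the statement that the cone of quasi-periodic $w$ (multiplier $t=\abs{a}^{-2}e^{-4\pi i s}$) with $P_0(z)w(z)\ge 0$ and $e^{-\pi i s}P_1(z)w(qz)\ge 0$ on $S^1$, where $P_0=R_0S_0$ and $P_1=e^{\pi i s}R_1(q^{-1}z)S_{-1}(qz)$ are real on $S^1$ with zeros at the good points, has dimension $2k$ and is independent of $s,P_0,P_1$ --- and this is exactly the content of the proof of Theorem~3.7 in~\cite{K}, which it cites. In your sketch this is the sentence ``the conditions for all $j$ collapse \dots to the positivity of one genuinely elliptic function $E(z)$'' together with ``the map $w\mapsto E$ is a bijection,'' but no such $E$ is constructed: $P_0(z)w(z)$ and $P_1(z)w(qz)$ are two different functions on $S^1$, not restrictions of one meromorphic function to $S^1$ and $qS^1$, so one must exhibit a fixed quasi-periodic transfer factor $F$ (built, e.g., from theta functions) with multiplier $t^{-1}$, with prescribed zeros and poles so that $Fw$ has at most simple poles at the chosen pairs $(z_l,\ovl{z_l}^{-1})$, and with the correct reality and sign on both circles so that the two sign conditions become positivity of $Fw$ there. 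Riemann--Roch only gives the complex dimension of the target space of elliptic functions; it does not produce the cone isomorphism, does not show the trace cone is nonempty or of full real dimension $2k$, and does not by itself settle the independence of $s$ and of the choice of $P$ --- note that in the filtered case (Theorem~\ref{ThrFilteredAnswer}) the analogous cone can drop dimension or be empty, so the claim that all signs ``cancel by normalization'' is precisely what must be proved, not assumed. As written, your argument is therefore incomplete at the one step that carries the real content of the theorem, although you correctly located where the difficulty sits.
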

\begin{proof}
The proof is uniform for $2k>0$ and $2k=0$.

We know that a function $w$ with poles corresponding to bad indices does not give a positive definite form. Hence we can assume  that $w$ does not have poles at $q^{2c_i}$ for all bad $i$. 

Hence for any $j$ we have \begin{multline*}
T(R_j(q^{-2j}Z)R(q^{-2j}Z)S_{-j}(Z)\ovl{R}(Z^{-1}))=\\
\intl_{q^j S^1} R(q^{-2j}z)\ovl{R}(z^{-1})R_j(q^{-2j}z)S_{-j}(z)w(z)\abs{dz}.
\end{multline*} Using  the Stone-Weierstrass theorem we see that this quantity is positive for all nonzero Laurent polynomials $R$ if and only if $R_j(q^{-2j}z)S_{-j}(z)w(z)$ is nonnegative on $q^j S^1$. 

This is equivalent to saying that $R_j(q^{-j}z)S_{-j}(q^j z)w(q^j z)$ is nonnegative on $S^1$. We have \[\frac{w(q^{j+2}z)}{w(z)}=t=ba^{-1}=a^{-2}.\] Using the second statement of Lemma~\ref{LemQMIsomorphicToMRho} we get that \[\frac{R_j(q^{-j}z)S_{-j}(q^j z)w(q^j z)}{R_{j+2}(q^{-j-2}z)S_{-j-2}(q^{j+2}z)w(q^{j+2}z)}\] is nonnegative on $S^1$. Hence it is enough to check this condition for $j=0,1$.

Let $C$ be the cone of quasi-periodic functions that give a positive trace. We have $w\in C$ if and only if $R_0(z)S_0(z)w(z)$ and $R_1(q^{-1}z)S_{-1}(qz)w(qz)$ are nonnegative on $S^1$.  Using the third statement of Lemma~\ref{LemQMIsomorphicToMRho} we see that $P_0=R_0(z)S_0(z)$ and $P_1=e^{\pi i s}R_1(q^{-1}z)S_{-1}(qz)$ are real on $S^1$. 

Now we should describe the roots of $P_0$ and $P_1$ on the unit circle. Using the description of the roots of $R_j$ (and, changing $c$ and $c'$, roots of $S$) in Lemma~\ref{LemDescriptionOfRj} each root of $P_j$ corresponds to $c_i$ such that $2\Re c_i$ (or, equivalently, $\Re c'_i)$ is an integer with the same parity as $j$. Using reality condition~\eqref{EqRealityCondition} $c_i+\ovl{c'_\sigma(i)}=1$ we get that $c_i-c'_{\sigma(i)}$ is an integer. Since $c'$ is a generic parameter, we have $\sigma(i)=i$ and $c_i+\ovl{c'_i}=1$. Hence $\Re c_i+\Re c'_i=1$ and the index $i$ is good. 

In the case $c=c'$, the behavior of $R_0(z)S_0(z)w(z)$ and $R_1(q^{-1}z)S_{-1}(qz)w(qz)$ is described in Theorem~3.7 in~\cite{K}. The only change here is that the roots of $P_0,P_1$ on unit circle may be different, but both here and there the allowed poles of $w$ cancel out the roots of $P_0$, $P_1$. Hence the proof also works in out case and when $k>0$ we get that the cone of $w$ such that $P_0(z)w(z)$ and $e^{-\pi i s}P_1(z)w(qz)$ are nonnegative on $S^1$ has dimension $2k$ and does not depend on $s, P_0, P_1$. In particular, taking $s=0$, $P_0=P_1=1$ we get the cone in the statement of the theorem.

In the case $k=0$ the Theorem~3.7 in~\cite{K} contains a mistake that should be fixed as follows. Since there are no good indices, the only functions that could work are constant functions. Such trace exists only when $\rho^2$ is the identity. Since there are no good indices, there are no roots of $P_0$ and $P_1$ on the unit circle. Multiplying the isomorphism $\phi\colon M_{c,c',\rho^{-1}}\to M_{c',c}$ by a real number we can assume that $P_0$ is positive on the unit circle. For one of the choices of $\rho_+, \rho_-$, polynomial $P_1$ will also be positive on the unit circle and we get a positive trace.





\end{proof}
\paragraph{Example: $n=0,1$.}
Let's see how Theorems~\ref{ThrFilteredAnswer} and~\ref{ThrQAnswer} work for small values of $n$. We start with the case $n=1$ and discuss the case $n=2$ in the next subsection.

Note that for $q$-deformations $n$ should be even, so when $n=1$ we only have filtered deformations. In the case $n=1$  the algebra $A_c=W$ is isomorphic to Weyl algebra $W$, it is generated by $u,v$ with relation $[u,v]=1$. The parameter $c_1$ defines inclusion of $W$ into $\CN[x,x^{-1},\partial_x]\colon u\mapsto v,v\mapsto x^{-1}(x\partial_x-c_1)=\partial_x-c_1x^{-1}$. The bimodule $M_{c,c'}$ consists of differential operators with a possible pole at zero that send $x^{c_1'}\CN[x]$ to $x^{c_1}\CN[x]$. Since $x^{c_1-c_1'}$ provides a linear isomorphism between these two spaces, we have $M_{c,c'}=A_c x^{c_1-c_1'}=x^{c_1-c_1'} A_{c'}$, a free module of rank one from each side. Note that a linear isomorphism $A_c\cong M_{c,c'}\cong A_{c'}$ sends $a\in A_c$ to $x^{c_1'-c_1}ax^{c_1-c_1'}$, hence it is an isomorphism of algebras that sends $x$ to $x$ and $\partial_x-c_1x^{-1}$ to $\partial_x-c_1'x^{-1}$.

Note that the conjugation $\rho\colon A_c\to A_{c'}$ does not depend on $c,c'$: it sends $u$ to $bv$ and $v$ to $au$.

It follows that invariant sesquilinear forms on $M_{c,c'}$ are in one-to-one correspondence with invariant sesquilinear forms on $W$. In this case the only root of $P(x)=x$ is good and using Theorem~\ref{ThrFilteredAnswer} we deduce that positive traces exist only when $\rho=\rho_+$, $t\neq 1$ and in this case a positive trace is unique up to scaling. This is consistent with Proposition~4.7 in~\cite{EKRS}.

The case $n=0$ can also be considered, the algebra $A_c$ in this case is just the algebra ($q$-)differential operators with a possible pole at zero. In the case of differential operators there are no traces, in the case of $q$-differential operators there exists a trace only when $\rho^2$ is the identity. When $\rho^2=\id$, the trace is given by the formula $T(R(z))=[1]R=\int_{S^1}R(z)\frac{dz}{z}$ and is positive for one of the two possible choices of $\rho$.
\subsection{The case $n=2$, connection with unitary representations of $\SL(2)$ and $\SL_q(2)$}

Recall that in the case $n=2$ the ($q$-)deformations of the Kleinian singularity of type $A_1$ are central reductions of $U(\mf{sl}_2)$ and $U_q(\mf{sl}_2)$. In the case of deformations, Harish-Chandra $U(\mf{sl}_2)$-modules with integer weights of $\ad h$ are complex $(\mf{sl}_2(\C),\operatorname{SU}_2)$-modules, that is, Harish-Chandra modules in the classical sense. Unitarizable Harish-Chandra modules correspond to unitary representations of $\SL(2,\C)$. We check below that our results partially recover the classical results on the irreducible infinite-dimensional representations of $\SL(2,\C)$.

In the case of $q$-deformations, the situation is more complicated. The quantum $\SL_q(2)$ and $U_q(\mf{sl}_2)$ are dual Hopf algebras. Below we show that the classification of unitary representations in terms of the action of Casimir element is the same in our case of $U_q(\mf{sl}_2)$ and the case of $\SL_q(2)$ considered by Pusz~\cite{Pusz}. We leave the analytical details and precise relation between unitary representations of $U_q(\mf{sl}_2)$ and $\SL_q(2)$ to the future work. 

To avoid double counting of bimodules, we start with the following observation.

Note that for any half-integer $r$ we can change $h$ to $h+r$ in $A_c$ and change $h$ to $h-r$ in $A_{c'}$. In the case of $q$-deformations we change $Z$ to $q^{\pm r }Z$. The parameters $c_i$ will shift by $r$ and the parameters $c'_i$ will shift by $-r$. The conjugation $\rho$ is still defined, and we can still define $M_{c,c'}$ for these new parameters. We will denote this new bimodule by $M_{c,c',r}$ and the old one by $M_{c,c'}$. We claim that $\phi(m)=x^rmx^r$ is an isomorphism between $M_{c,c',r}$ and $M_{c,c'}$. It is enough to prove that it is a homomorphism. Indeed,

Then \[x\phi(m)=\phi(xm),\] \[h\phi(m)=hx^rmx^r=x^r(z+r)mx^r=\phi((z+r)m),\]
\[f\phi(m)=x^{-1}P(z-\tfrac12)x^rmx^r=x^{r-1}P(z+r-\tfrac12)mx^r=\phi(fm).\]

Hence we can shift parameters by a half-integer without changing anything.

Let us describe the parameters corresponding to bimodules $M_{c,c'}$ that admit an invariant positive definite form.

In the case $n=2$ there are parameters $c_1,c_2,c'_1,c'_2$ such that $c_1-c'_1$, $c_2-c'_2$ are integers, $c_1-c_2$ is not an integer. For some permutation $\sigma$ of $\{1,2\}$ we have $c_i+\ovl{c'_{\sigma(i)}}=1$. 

In the case when $\sigma$ is trivial we get $c_i+\ovl{c'_i}=1$ for $i=1,2$. Since $c_i-c'_i$ belongs to $\ZN$ we deduce that $2\Re c_i$ is an integer. Hence $\Re c_i$ and $\Re c'_i$ are integers or half-integers. An index $i$ is good if and only if $0<\Re c_i+\Re c'_i<2$. This is satisfied since $c_i+\ovl{c'_i}=1$.


Hence any $c_1$ and $c_2$ such that $2\Re c_1$, $2\Re c_2$ are integers, $c_1+c_2$ is an integer, $c_2-c_1$ is not an integer, give a unitarizable bimodule. This holds both for $q$-deformation and for a usual deformation. Note that we can shift $c_1$ and $c_2$ by $\frac{c_1+c_2-1}{2}$ and get the following: $c_1=c+\tfrac12$, $c_2=-c+\tfrac12$, $2\Re c$ is an integer, $c$ is not real. We expect that half-integer $c$ also gives a unitarizable bimodule. Note that $\Re c=0$ is a situation of regular bimodule $M_{c,c'}=A_c=A_{c'}$.

In the case when $\sigma$ is nontrivial we get $c_1+\ovl{c'_2}=1$, $c_2+\ovl{c'_1}=1$. 

From this we get that $\Im c_1=\Im c_2$. On the other hand, $c_2-c'_2$ is an integer, so $\Im c_2=\Im c'_2$. We assumed that $c_1+c_2$ is an integer, hence $c_1,c_1',c_2,c_2'$ are real numbers such that $c'_2=1-c_1$, $c'_1=1-c_2$. Shifting $c_1,c_2$ by $\frac{c_1+c_2-1}{2}$ we get $c_1=c+\tfrac12$, $c_2=-c+\tfrac12$, $c'_1=c+\tfrac12$, $c'_2=c-\tfrac12$. In this case the bimodule $M_{c,c'}$ is the regular bimodule $A_c=A_{c'}$.

An index $i$ is good if and only if $0<\Re c_i+\Re c'_i<2$. This is equivalent to $\abs{2c}<1$, so that $\abs{c}<\tfrac12$. This is the same answer we had in~\cite{EKRS} and~\cite{K}: the roots $\pm \alpha$ or $q^{\pm \alpha}$ of $P$ should satisfy $\abs{\Re\alpha}<\tfrac12$.

Let us compare these results with the classical results on irrreducible unitary representations of $\SL(2)$ and $\SL_q(2)$. The results for $\SL_q(2)$ are obtained in~\cite{Pusz}. The same article writes classification of unitary representations of $\SL(2)$ in terms of Casimir element, so we will also use it as a reference. Let $U_1(\mf{sl}_2)=U(\mf{sl}_2)$.

Similarly to Subsection 6.3 in~\cite{K} in the case of $q$-deformations we can take the locally finite part of $M_{c,c'}$ with respect to the adjoint action of the Hopf algebra $U_q(\mf{sl}_2)$ and $U_q$-invariant forms correspond to $g_{q^{-2}}$-twisted traces on $A_c$. Note that there are two traces on $A_c$, but a computation in~\cite{K} shows that the trace corresponding to the weight $w=x$ is zero on the locally finite part. Hence there one trace on the locally finite part of $A_c$ up to a constant, giving one positive definite form on the locally finite part of $M_{c,c'}$ up to a positive constant.

\begin{rem}
It is possible that one should consider instead an antilinear automorphism $\rho_S$ that multiplies $u$ and $v$ by a certain power of $Z$, as in~\cite{MeBulgaria}. In that paper we showed that the positive trace for $\rho_S$ is unique. We expect that the proof in this section can be combined with the methods of~\cite{MeBulgaria} to show that there is a unique $\rho_S$-invariant positive definite form in the case when all indices are good. For $n=2$ this means that the set of pairs $c,c'$ giving a unitarizable bimodule does not change.
\end{rem}

Now let us compute what finite-dimensional representations of $U_q(\mf{sl}_2)$ are inside $M_{c,c'}$ and what values of the Casimir element correspond to unitarizable parameters $c$.

There is freedom in choosing $h$, $Z$ in our case, but for $U_q(\mf{sl}_2)$ the element $h$ or $Z$ is fixed. In the case of $q=1$ element $h$ is uniquely defined by the condition that $ef+fe$ is an even polynomial in $h$. This means that $P(z+\tfrac12)+P(z-\tfrac12)$ is even, which is equivalent to $P$ being even. Since $P$ has roots $c_1-\tfrac12$, $c_2-\tfrac 12$, this means $c_1+c_2=1$. Our choice of $c_1,c_2$ satisfies $c_1+c_2=1$. In the case $q\neq 1$ the product of roots of $P$ should be one, this is also equivalent to $c_1+c_2=1$.

The $\ad h$ or $\operatorname{Ad} Z$ homogeneous elements in $M_{c,c'}$ have form $x^k R(z)$ or $x^k R(Z)$. They are highest weight if they commute with $u=x$, this happens if and only if $R=1$. Hence finite-dimensional representations of $U_q(\mf{sl}_2)$ are in one-to-one correspondence with nonnegative integers $k$ such that $x^k$ belongs to $M_{c,c'}$. This is equivalent to $c_i'+k\geq c_i$ for $i=1,2$. In the case of trivial $\sigma$ this means $k\geq \max(c_1-c'_1,c_2-c'_2)=\abs{2\Re c}$. In the case of nontrivial $\sigma$ the minimal $k$ is zero. We get that the minimal $k$ for the regular bimodule is zero. Conversely, when the minimal $k$ is zero, $M_{c,c'}$ is a regular bimodule. 


Suppose that the minimal $k$ is nonzero. Then $c=\frac{k}{2}+i\alpha$ for some real number $\alpha$. Let us compute the action of the Casimir element in this case. In the beginning of Section~6.1 of~\cite{K} we proved that the central reduction $U_q(\mf{sl}_2)/(\Omega-X_0)$ has parameter $P(z)=-\frac{z+z^{-1}-2}{(q-q^{-1})^2}+X_0$.

The polynomial $P$ has roots $q^{2\pm c}$. Using Vieta's formula we have $q^{2c}+q^{-2c}=2+X_0(q-q^{-1})^2$. Since $X_0$ depends linearly on $q^{2c}+q^{-2c}$, it is enough to describe the locus of $q^{2c}+q^{-2c}$. We have $q^{2c}+q^{-2c}=q^{k+2i\alpha}+q^{-k-2i\alpha}$. Let $q^k=r$, $q^{2i\alpha}=\cos\phi+i\sin\phi$. Then $q^{2c}+q^{-2c}=(r+r^{-1})\cos\phi+i(r-r^{-1})\sin\phi$. This is precisely the ellipse $\mc{E}_p$ in~\cite{Pusz}, equation~(0.3), multiplied by $\frac{\sqrt{1+q^2}}{q}$.

The set $\frac{\sqrt{1+q^2}}{q}\mc{E}_0$ is the closed interval $[-q-q^{-1},q+q^{-1}]$ and the endpoints correspond to one-dimensional representations. In the case of the regular bimodule the sum of roots in~\cite{K} belongs to $(-q-q^{-1},q+q^{-1})$. Hence our answer coincides with~\cite{Pusz} except that Pusz also allows positive half-integer $p$. If we add the square root of $Z$ we get exactly the same answer.

We already checked in~\cite{EKRS} that in the case of $q=1$ and the regular bimodule we get the classical theory of spherical unitary representation of $\SL(2,\CN)$. In the other cases we have $c=\frac{k}{2}+i\alpha$, where $k$ is a nonzero number. It is checked in~\cite{EKRS}, Example~2.1.2 that $P(x)=x^2-X_0$, where $X_0$ is the value of Casimir element. By Vieta's formula $X_0=c^2=\frac{k^2}{4}-\alpha^2+ik\alpha$. With $k=2p$ and $\alpha=t$ this coincides with $\frac{1}{2}\mc{P}_p-1$, where $\mc{P}_p$ are parabolas in~\cite{Pusz}.

{\footnotesize
\hspace{6mm}Department of Mathematics, Northwestern University, Evanston, IL 60208;

\texttt{klyuev@northwestern.edu}}
\end{document}